\numberwithin{theorem}{section}
\def\NAT@def@citea{\def\@citea{\NAT@separator}}
\title{Constant Along Primal Rays Conjugacies\\
  and the $l_0$ Pseudonorm}
\author{Jean-Philippe Chancelier 
  and Michel De Lara\footnote{michel.delara@enpc.fr}
  \\ CERMICS, Ecole des Ponts, Marne-la-Vall\'ee, France}
\begin{document}

\maketitle

\begin{abstract}
 The so-called \lzeropseudonorm\ on~$\RR^d$
 counts the number of nonzero components of a vector.
 For exact sparse optimization problems --- with the \lzeropseudonorm\
 standing either as criterion or in the constraints ---
 the Fenchel conjugacy fails to provide relevant analysis.
  In this paper, we display a class of conjugacies that 
  are suitable for the \lzeropseudonorm. 
  For this purpose, we suppose given a (source) norm on~$\RR^d$.
  With this norm, we define, 
  on the one hand, a sequence of so-called coordinate-$k$ norms
  and, on the other hand, a coupling between~$\RR^d$ and itself, 
  called Capra (constant along primal rays).
  Then, we provide formulas for the \Capra-conjugate and biconjugate,
  and for the \Capra\ subdifferentials, 
  of functions of the \lzeropseudonorm,
  in terms of the coordinate-$k$ norms.
  As an application, we provide a new family of lower bounds for 
  the \lzeropseudonorm, as a fraction between two norms,
  the denominator being any norm.
\end{abstract}

{{\bf Key words}: \lzeropseudonorm, 
  Fenchel-Moreau conjugacy, Capra conjugacy, coordinate-$k$ norm.}

{{\bf AMS classification}: 46N10, 49N15, 46B99, 52A41, 90C46 }



\section{Introduction}

The \emph{counting function}, also called \emph{cardinality function}
or \emph{\lzeropseudonorm}, 
counts the number of nonzero components of a vector in~$\RR^d$.
The \lzeropseudonorm\ measures the sparsity of a vector,
and the literature in sparse optimization that mentions it is plethoric. 
However, because of its combinatorial nature, 
the problems of minimizing the \lzeropseudonorm\ under constraints
or of minimizing a criterion under $k$-sparsity constraint
(\lzeropseudonorm\ less than a given integer~$k$) 
are usually not tackled as such. Most of the literature in sparse
optimization studies surrogate problems where the \lzeropseudonorm\
either enters a penalization term or is replaced by a regularizing term.
We refer the reader to \cite{Nikolova:2016} that provides a brief tour of the
literature dealing with least squares minimization constrained by $k$-sparsity,
and to \cite{Hiriart-Urruty-Le:2013} for a survey of the rank
function of a matrix, that shares many properties with the
\lzeropseudonorm.

Conjugacies, and more generally dualities, are a powerful tool to tackle classes of
optimization problems. The Fenchel conjugacy plays a central role in analyzing
solutions of convex problems (and beyond) \cite{Rockafellar:1974}.
However, it fails to provide relevant analysis for
optimization problems involving the \lzeropseudonorm.
Indeed, the Fenchel biconjugate of the characteristic function of the level sets
of the \lzeropseudonorm\ is zero, and
the Fenchel biconjugate of the \lzeropseudonorm\ is also zero.
The field of generalized convexity goes beyond the Fenchel conjugacy 
and convex functions and displays conjugacies adapted to analyze
classes of functions such as 
increasing positive homogeneous, difference of convex, quasi-convex,
increasing and convex-along-rays.
For more details on the theory, and more examples, 
we refer the reader to the books \cite{Singer:1997,Rubinov:2000}
and to the nice introduction paper \cite{Martinez-Legaz:2005}.

To our knowledge, none of the conjugacies in the literature
is adapted to the \lzeropseudonorm\ (the \lzeropseudonorm\ is convex-along-rays according to the definition
in~\cite{Rubinov-Glover:1999} but not in \cite{Singer:1997}, and
calculation shows that the \lzeropseudonorm\ is not convex for the conjugacy in~\cite{Rubinov-Glover:1999}).
In this paper, we study the \lzeropseudonorm\ as such and we 
display a suitable class of conjugacies. 
We extend results of~\cite{Chancelier-DeLara:2021_ECAPRA_JCA} beyond the special
Euclidian norm setting.

The paper is organized as follows.
In Sect.~\ref{The_lzeropseudonorm_and_its_level_sets},
we recall the definition of the \lzeropseudonorm,
and we introduce the notion of sequence of norms on~$\RR^d$ 
that are (strictly or not) decreasingly graded with respect to the \lzeropseudonorm.
In Sect.~\ref{Coordinate-k_and_dual_coordinate-k_norms}, 
we introduce a sequence of coordinate-$k$ norms,
all generated from any (source) norm on~$\RR^d$,
and their dual norms.
In Sect.~\ref{The_Capra_conjugacy_and_the_lzeropseudonorm},
we define a so-called \Capra\ coupling between~$\RR^d$ and itself, 
that depends on any (source) norm on~$\RR^d$.
Then, we provide formulas for the \Capra-conjugate and biconjugate,
and for the \Capra\ subdifferentials, 
of functions of the \lzeropseudonorm\ 
(hence, in particular, of the \lzeropseudonorm\ itself
and of the characteristic functions of its level sets),
in terms of the coordinate-$k$ norms.
In Sect.~\ref{Norm_ratio_lower_bounds_for_the_l0_pseudonorm},
as an application, we provide a new family of lower bounds for 
the \lzeropseudonorm, as a fraction between two norms,
the denominator being any norm.
The Appendix~\ref{Appendix} gathers background on Fenchel-Moreau conjugacies.

\section{The \lzeropseudonorm\ and its level sets}
\label{The_lzeropseudonorm_and_its_level_sets}

First, we introduce basic notations regarding the \lzeropseudonorm.
Second, we recall the definition of a sequence of norms on~$\RR^d$
which is (strictly or not) decreasingly graded 
with respect to the \lzeropseudonorm\
(as introduced in the companion paper~\cite{Chancelier-DeLara:2020_OSM}).
We use the notation \( \ic{r,s}=\na{r,r+1,\ldots,s-1,s} \) for two integers
$r \leq s$.

\subsubsubsection{The \lzeropseudonorm}

For any vector \( \primal \in \RR^d \), 
\(  \Support{\primal} = \bset{ j \in \ic{1,d} }{\primal_j \not= 0 } 
\subset \ic{1,d} \) is the support of~\( \primal \).
The so-called \emph{\lzeropseudonorm} is the function
\( \lzero : \RR^d \to \ic{0,d} \)
defined by 
\begin{equation}
  \lzero\np{\primal} = \cardinal{ \Support{\primal} }
  = \textrm{number of nonzero components of } \primal
  \eqsepv \forall \primal \in \RR^d
  \eqfinv
  \label{eq:pseudo_norm_l0}  
\end{equation}
where $\cardinal{K}$ denotes the cardinal of 
a subset \( K \subset \ic{1,d} \). 
The \lzeropseudonorm\ shares three out of the four axioms of a norm:
nonnegativity, positivity except for \( \primal =0 \), subadditivity.
The axiom of 1-homogeneity does not hold true;
in contrast to norms, the \lzeropseudonorm\ is 0-homogeneous:
\begin{equation}
  \lzero\np{\rho\primal} = \lzero\np{\primal} \eqsepv \forall \rho \in \RR\backslash\{0\}
  \eqsepv \forall \primal \in \RR^d
  \eqfinp
  \label{eq:lzeropseudonorm_is_0-homogeneous}
\end{equation}

\subsubsubsection{The level sets of the \lzeropseudonorm}

The \lzeropseudonorm\ is used in exact sparse optimization problems of the form
\( \inf_{ \lzero\np{\primal} \leq k } \fonctionprimal\np{\primal} \).
Thus, we introduce 
\begin{subequations}
  \begin{align}
      \text{the \emph{level sets}}\qquad 
   \LevelSet{\lzero}{k} 
    &= 
      \defset{ \primal \in \RR^d }{ \lzero\np{\primal} \leq k }
      \eqsepv \forall k \in \ba{0,1,\ldots,d} 
      \eqfinv
      \label{eq:pseudonormlzero_level_set}
\\
      \text{and the \emph{level curves}}\qquad 
       \LevelCurve{\lzero}{k} 
    &= 
      \defset{ \primal \in \RR^d }{ \lzero\np{\primal} = k }
      \eqsepv \forall k \in \ba{0,1,\ldots,d} 
      \eqfinp
      \label{eq:pseudonormlzero_level_curve}
  \end{align}
\end{subequations}
For any subset \( K \subset \ic{1,d} \), 
we denote the subspace of~$\RR^d$ made of vectors
whose components vanish outside of~$K$ by\footnote{%
  Here, following notation from Game Theory, 
  we have denoted by $-K$ the complementary subset 
  of~$K$ in \( \ic{1,d} \): \( K \cup (-K) = \ic{1,d} \)
  and \( K \cap (-K) = \emptyset \).}
\begin{equation}
  \FlatRR_{K} = \RR^K \times \{0\}^{-K} =
  \bset{ \primal \in \RR^d }{ \primal_j=0 \eqsepv \forall j \not\in K } 
  \subset \RR^d 
  \eqfinv
  \label{eq:FlatRR}
\end{equation}
where \( \FlatRR_{\emptyset}=\{0\} \).
We denote by \( \pi_K : \RR^d \to \FlatRR_{K} \) 
the \emph{orthogonal projection mapping}
and, for any vector \( \primal \in \RR^d \), by 
\( \primal_K = \pi_K\np{\primal} \in \FlatRR_{K} \) 
the vector which coincides with~\( \primal \),
except for the components outside of~$K$ that are zero.
It is easily seen that the orthogonal projection mapping~$\pi_K$
is self-dual, giving
\begin{equation}
  \proscal{\primal_K}{\dual_K} 
  = \proscal{\primal_K}{\dual} 
  = \bscal{\pi_K\np{\primal}}{\dual} 
  = \bscal{\primal}{\pi_K\np{\dual}}
  = \proscal{\primal}{\dual_K} 
  \eqsepv 
  \forall \primal \in \RR^d 
  \eqsepv 
  \forall \dual \in \RR^d 
  \eqfinp
  \label{eq:orthogonal_projection_self-dual}
\end{equation}
The level sets of the \lzeropseudonorm\
in~\eqref{eq:pseudonormlzero_level_set}
are easily related to the subspaces~\( \FlatRR_{K} \) of~\( \RR^d \)
by\footnote{The notation \( \bigcup_{\cardinal{K} \leq k} \)
is a shorthand for 
\( \bigcup_{ { K \subset \ic{1,d}, \cardinal{K} \leq k}} \).}
\begin{equation}
  \LevelSet{\lzero}{k} 
  = 
  \bset{ \primal \in \RR^d }{ \lzero\np{\primal} \leq k }
  = \bigcup_{\cardinal{K} \leq k} \FlatRR_{K} 
  \eqsepv \forall k\in\ic{0,d} 
  \eqfinp
  \label{eq:level_set_pseudonormlzero}
\end{equation}

\subsubsubsection{Decreasingly graded sequence of norms with respect to the \lzeropseudonorm}

Now, we introduce the notion of sequences of norms 
that are, strictly or not,
decreasingly graded with respect to the \lzeropseudonorm:
in a sense, the monotone sequence detects 
the number of nonzero components of a vector in~$\RR^d$
when it becomes stationary.
In the following definition, 
\( \sequence{\TripleNorm{\cdot}_{k}}{k\in\ic{1,d}} \) denotes any 
sequence of norms on~$\RR^d$.

\begin{definition}(\cite[Definition~\ref{OSM-de:decreasingly_graded}]{Chancelier-DeLara:2020_OSM})
  We say that a sequence 
  \( \sequence{\TripleNorm{\cdot}_{k}}{k\in\ic{1,d}} \) of norms on~$\RR^d$
  is \emph{decreasingly graded} 
(resp. \emph{strictly decreasingly graded})
\wrt\ (with respect to) the \lzeropseudonorm\ if,
  for any \( \primal\in\RR^d \),
  one of the three following equivalent statements holds true.
  \begin{enumerate}
  \item 
    We have the implication (resp. equivalence), for any \( l\in\ic{1,d} \), 
    \begin{subequations}
      \begin{align}
        \lzero\np{\primal} = l 
        &\implies
        \TripleNorm{\primal}_{1} \geq \cdots \geq 
        \TripleNorm{\primal}_{l-1} \geq 
        \TripleNorm{{\primal}}_{l} = 
        \cdots =
        \TripleNorm{{\primal}}_{d} 
        \eqfinv
        \label{eq:decreasingly_graded_a}
    \\
 \left( \right. \text{resp.} \qquad      
        \lzero\np{\primal} = l 
        &\iff 
        \TripleNorm{\primal}_{1} \geq \cdots \geq 
        \TripleNorm{\primal}_{l-1} >
        \TripleNorm{{\primal}}_{l} = 
        \cdots =
        \TripleNorm{{\primal}}_{d}
        \eqfinp 
          \left. \right)
        \label{eq:strictly_decreasingly_graded_a}
      \end{align}
    \item 
      The sequence 
      \( k \in \ic{1,d} \mapsto \TripleNorm{\primal}_{k} \)
      is nonincreasing and we have the implication (resp. equivalence), for any \( l\in\ic{1,d} \), 
      \begin{align}
        \lzero\np{\primal} \leq l 
        & \implies
        \TripleNorm{{\primal}}_{l} = 
        \TripleNorm{{\primal}}_{d}
        \eqfinv
        \label{eq:decreasingly_graded_b}
 \\
\left( \right. \text{resp.} \qquad      
    \lzero\np{\primal} \leq l 
       & \iff 
        \TripleNorm{{\primal}}_{l} = 
        \TripleNorm{{\primal}}_{d}
        \quad \bp{       \iff 
          \TripleNorm{{\primal}}_{l} \leq 
          \TripleNorm{{\primal}}_{d} }
        \eqfinp
             \left. \right)
     \label{eq:strictly_decreasingly_graded_b}
      \end{align}
    \item 
      The sequence 
      \( k \in \ic{1,d} \mapsto \TripleNorm{\primal}_{k} \)
      is nonincreasing and we have the inequality  (resp. equality)
      \begin{align}
        \lzero\np{\primal} 
        &\geq 
\min \bset{k \in \ic{1,d} }%
        { \TripleNorm{{\primal}}_{k} = \TripleNorm{{\primal}}_{d} }
        \eqfinv 
        \label{eq:decreasingly_graded_c}
\\
\left( \right. \text{resp.} \qquad      
         \lzero\np{\primal} 
        &= 
          \min \bset{k \in \ic{1,d} }%
        { \TripleNorm{{\primal}}_{k} = \TripleNorm{{\primal}}_{d} }
          \eqfinp
        \label{eq:strictly_decreasingly_graded_c}
              \left. \right)
      \end{align}
      %
    \end{subequations}
  \end{enumerate}
  \label{de:decreasingly_graded}
\end{definition}

\section{Coordinate-$k$ norms and dual coordinate-$k$ norms}
\label{Coordinate-k_and_dual_coordinate-k_norms}

In \S~\ref{Background_on_norms}, we provide background on norms.
Then, in \S~\ref{Definition_of_coordinate-k_and_dual_coordinate-k_norms},
we introduce coordinate-$k$ norms and dual coordinate-$k$ norms.

\subsection{Background on norms}
\label{Background_on_norms}

For any norm~$\TripleNorm{\cdot}$ on~$\RR^d$,
we denote the unit sphere~$\TripleNormSphere$ and the unit ball~$\TripleNormBall$ 
by
\begin{equation}
  \TripleNormSphere= 
      \defset{\primal \in \RR^d}{\TripleNorm{\primal} = 1} 
\eqsepv
    \TripleNormBall = 
      \defset{\primal \in \RR^d}{\TripleNorm{\primal} \leq 1} 
      \eqfinp
 \label{eq:triplenorm_unit_sphere}
\end{equation}


\subsubsubsection{Dual norms}

We recall that the expression
\( \TripleNorm{\dual}_\star = 
  \sup_{ \TripleNorm{\primal} \leq 1 } \proscal{\primal}{\dual} \), 
  \( \forall \dual \in \RR^d \),
defines a norm on~$\RR^d$, 
called the \emph{dual norm} \( \TripleNormDual{\cdot} \).
  By definition of the dual norm, we have the inequality
  \begin{equation}
    \proscal{\primal}{\dual} \leq 
    \TripleNorm{\primal} \times \TripleNormDual{\dual} 
    \eqsepv \forall  \np{\primal,\dual} \in \RR^d \times \RR^d 
    \eqfinp 
    \label{eq:norm_dual_norm_inequality}
  \end{equation}
We denote the unit sphere~$\TripleNormDualSphere$ and the unit ball~$\TripleNormDualBall$
of the dual norm~$\TripleNormDual{\cdot}$ by 
\begin{equation}
      \TripleNormDualSphere = 
      \defset{\dual \in \RR^d}{\TripleNormDual{\dual} = 1} 
      \eqsepv
    \TripleNormDualBall = 
      \defset{\dual \in \RR^d}{\TripleNormDual{\dual} \leq 1} 
      \eqfinp
\label{eq:triplenorm_Dual_unit_sphere}
\end{equation}
%
Denoting by $\sigma_S$ the \emph{support function} of the set~$S \subset \RR^d$
  (\( \sigma_S\np{\dual}=\sup_{\primal \in S} \proscal{\primal}{\dual} \)),
  we have
  \begin{equation}
    \TripleNorm{\cdot} = \sigma_{\TripleNormDualBall} = \sigma_{\TripleNormDualSphere} 
    \mtext{ and } 
    \TripleNormDual{\cdot} = \sigma_{\TripleNormBall} = \sigma_{\TripleNormSphere}
    \eqfinv
    \label{eq:norm_dual_norm}
  \end{equation}
  where \( \TripleNormDualBall =\TripleNormBall^{\odot} 
    = \defset{\dual \in \RR^d}{\proscal{\primal}{\dual} \leq 1 
      \eqsepv \forall \primal \in \TripleNormBall } \)
  is the polar set~\( \TripleNormBall^{\odot} \) 
  of the unit ball~\( \TripleNormBall \).

\subsubsubsection{Restriction norms}

\begin{definition}
  For any norm~$\TripleNorm{\cdot}$ on~$\RR^d$
  and any subset \( K \subset\ic{1,d} \),
  we define 
  \begin{itemize}
  \item 
    the \emph{$K$-restriction norm} \( \TripleNorm{\cdot}_{K} \)
    on the subspace~\( \FlatRR_{K} \) of~\( \RR^d \),
    as defined in~\eqref{eq:FlatRR}, by
    \begin{equation}
      \TripleNorm{\primal}_{K} = \TripleNorm{\primal} 
      \eqsepv
      \forall \primal \in \FlatRR_{K} 
      \eqfinp 
      \label{eq:K_norm}
    \end{equation}
  \item 
    the \emph{$\SetStar{K}$-norm} \( \TripleNorm{\cdot}_{K,\star} \),
    on the subspace~\( \FlatRR_{K} \) of~\( \RR^d \), which is 
    the norm \( \bp{\TripleNorm{\cdot}_{K}}_{\star} \),
    given by the dual norm (on the subspace~\( \FlatRR_{K} \))
    of the restriction norm~\( \TripleNorm{\cdot}_{K} \) 
    to the subspace~\( \FlatRR_{K} \) (first restriction, then dual).
  \end{itemize}
  \label{de:K_norm}
\end{definition}

We have that \cite[Equation~\eqref{OSM-eq:K_star=sigma}]{Chancelier-DeLara:2020_OSM}
\begin{equation}
  \TripleNorm{\dual}_{K,\star}
  =
  \sigma_{ \FlatRR_{K} \cap \TripleNormBall }\np{\dual}  
  = \sigma_{ \FlatRR_{K} \cap \TripleNormSphere }\np{\dual}  
  \eqsepv \forall \dual \in \FlatRR_{K} 
  \eqfinp
  \label{eq:K_star=sigma}
\end{equation}

\subsection{Coordinate-$k$ and dual coordinate-$k$ norms}
\label{Definition_of_coordinate-k_and_dual_coordinate-k_norms}

\subsubsubsection{Source norm}

Let $\TripleNorm{\cdot}$ be a norm on~$\RR^d$, 
that we will call the \emph{source norm}.

\subsubsubsection{Definition of coordinate-$k$ and dual coordinate-$k$ norms}

\begin{definition}
  For \( k \in \ic{1,d} \), we call \emph{coordinate-$k$ norm}
  the norm \( \CoordinateNorm{\TripleNorm{\cdot}}{k} \) 
  whose dual norm is the 
  \emph{dual coordinate-$k$ norm}, denoted by
  \( \CoordinateNormDual{\TripleNorm{\cdot}}{k} \), 
  with expression\footnote{%
The notation \( \sup_{\cardinal{K} \leq k} \) is a shorthand for 
  \( \sup_{ { K \subset \ic{1,d}, \cardinal{K} \leq k}} \).
}
  \begin{equation}
    \CoordinateNormDual{\TripleNorm{\dual}}{k}
    =
    \sup_{\cardinal{K} \leq k} \TripleNorm{\dual_K}_{K,\star} 
    \eqsepv \forall \dual \in \RR^d 
    \eqfinv
    \label{eq:dual_coordinate_norm_definition}
  \end{equation}
  where the $\SetStar{K}$-norm \( \TripleNorm{\cdot}_{K,\star} \) is given in
  Definition~\ref{de:K_norm}.
  \label{de:coordinate_norm}
\end{definition}
It is easily verified that  \( \CoordinateNormDual{\TripleNorm{\cdot}}{k} \)
indeed is a norm.
We adopt the convention \( \CoordinateNormDual{\TripleNorm{\cdot}}{0} = 0 \) 
(although this is not a norm on~$\RR^d$, but a seminorm).

\subsubsubsection{Examples}

Table~\ref{tab:Examples} provides examples
\cite{Chancelier-DeLara:2020_OSM,Chancelier-DeLara:2020_Variational}.
With this, we define the top $(k,q)$-norms in the last right column of 
Table~\ref{tab:Examples}.
The $(p,k)$-support norm, in the middle column of 
Table~\ref{tab:Examples}, is defined as the dual norm of the top $(k,q)$-norm,
with \( 1/p + 1/q =1 \).

\begin{table}
  \centering
  \begin{tabular}{||c||c|c||}
    \hline\hline 
    source norm \( \TripleNorm{\cdot} \) 
    & \( \CoordinateNorm{\TripleNorm{\cdot}}{k} \)
    & \( \CoordinateNormDual{\TripleNorm{\cdot}}{k} \)
    \\
    \hline\hline 
    \( \norm{\cdot}_{p} \) & $(p,k)$-support norm & top $(k,q)$-norm 
    \\
    & \( \Norm{\primal}_{p,k}^{\mathrm{sn}} \)
    & \( \Norm{\dual}_{k,q}^{\mathrm{tn}} \)
    \\
    && \( =\bp{ \sum_{\LocalIndex=1}^{k} \module{ \dual_{\nu(\LocalIndex)} }^q }^{1/q} \), \( 1/p + 1/q =1 \)
    \\
    \hline
    \( \norm{\cdot}_{1} \) 
    & $(1,k)$-support norm 
    & top $(k,\infty)$-norm  
    \\
    & $\ell_{1}$-norm & 
                        $\ell_{\infty}$-norm 
    \\
    & \( \Norm{\primal}_{1,k}^{\mathrm{sn}} = \norm{\primal}_{1} \) 
    & \( \Norm{\dual}_{k,\infty}^{\mathrm{tn}} = \module{ \dual_{\nu(1)} } = \norm{\dual}_{\infty} \) 
    \\
    \hline 
    \( \norm{\cdot}_{2} \) 
    & $(2,k)$-support norm 
    & top $(k,2)$-norm  
    \\
    & & 
        \( \Norm{\dual}_{k,2}^{\mathrm{tn}} = \sqrt{ \sum_{\LocalIndex=1}^{k} \module{ \dual_{\nu(\LocalIndex)} }^2 } \)
    \\
    \hline 
    \( \norm{\cdot}_{\infty} \) & $(\infty,k)$-support norm 
    & top $(k,1)$-norm  
    \\
    & & \( \Norm{\dual}_{k,1}^{\mathrm{tn}} = \sum_{\LocalIndex=1}^{k} \module{ \dual_{\nu(\LocalIndex)} } \)
    \\
    \hline\hline
  \end{tabular}
  \caption{Examples of coordinate-$k$ and dual coordinate-$k$ norms
    generated by the $\ell_p$ source norms 
    \( \TripleNorm{\cdot} = \norm{\cdot}_{p} \) for $p\in [1,\infty]$.
For \( \dual \in \RR^d \), $\nu$ denotes a permutation of \( \{1,\ldots,d\} \) such that
\( \module{ \dual_{\nu(1)} } \geq \module{ \dual_{\nu(2)} } 
\geq \cdots \geq \module{ \dual_{\nu(d)} } \).
    \label{tab:Examples}}
\end{table}
\bigskip

To prepare Sect.~\ref{The_Capra_conjugacy_and_the_lzeropseudonorm},
we provide properties of coordinate-$k$ and dual coordinate-$k$ norms.

\subsubsubsection{Properties of dual coordinate-$k$ norms}

We denote the unit sphere~\( \CoordinateNormDual{\TripleNormSphere}{k} \) 
and the unit ball ~\( \CoordinateNormDual{\TripleNormBall}{k} \)
of the dual coordinate-$k$ norm
\( \CoordinateNormDual{\TripleNorm{\cdot}}{k} \) 
in Definition~\ref{de:coordinate_norm} by
\begin{equation}
     \CoordinateNormDual{\TripleNormSphere}{k} = 
      \defset{\dual \in \RR^d}{\CoordinateNormDual{\TripleNorm{\dual}}{k} = 1} 
      \eqsepv 
    \CoordinateNormDual{\TripleNormBall}{k} = 
      \defset{\dual \in \RR^d}{\CoordinateNormDual{\TripleNorm{\dual}}{k} \leq 1} 
      \eqsepv k\in\ic{1,d}
      \eqfinp
\label{eq:dual_coordinate_norm_unit_sphere_ball}
\end{equation}

\begin{proposition}
  \quad 
  \begin{itemize}
  \item 
    For \( k \in \ic{1,d} \), the dual coordinate-$k$ norm satisfies
    \begin{equation}
      \CoordinateNormDual{\TripleNorm{\dual}}{k}
      =
      \sup_{\cardinal{K} \leq k} 
      \sigma_{ \np{ \FlatRR_{K} \cap \TripleNormSphere } }\np{\dual} 
      =
      \sigma_{ \LevelSet{\lzero}{k} \cap \TripleNormSphere }\np{\dual} 
      =
      \sigma_{ \LevelCurve{\lzero}{k} \cap \TripleNormSphere }\np{\dual} 
      \eqsepv \forall \dual \in \RR^d 
      \eqfinp
      \label{eq:dual_coordinate_norm}
    \end{equation}
  \item 
    We have the equality
    \begin{equation}
      \TripleNormDual{\cdot} 
      =
      \CoordinateNormDual{\TripleNorm{\cdot}}{d}
      \eqfinp  
      \label{eq:dual_coordinate_norm-d_equality}
    \end{equation}
  \item 
    The sequence 
    \( \sequence{\CoordinateNormDual{\TripleNorm{\cdot}}{\LocalIndex}}{\LocalIndex\in\ic{1,d}} \)
    of dual coordinate-$k$ norms in Definition~\ref{de:coordinate_norm}
    is nondecreasing, 
    that is, the following inequalities and equality hold true:
    \begin{equation}
      \CoordinateNormDual{\TripleNorm{\dual}}{1} \leq \cdots \leq 
      \CoordinateNormDual{\TripleNorm{\dual}}{\LocalIndex} \leq 
      \CoordinateNormDual{\TripleNorm{\dual}}{\LocalIndex+1} \leq \cdots \leq 
      \CoordinateNormDual{\TripleNorm{\dual}}{d} 
      = \TripleNormDual{\dual} 
      \eqsepv \forall \dual \in \RR^d
      \eqfinp
      \label{eq:dual_coordinate_norm_inequalities}
    \end{equation}
  \item 
    The sequence 
    \( \sequence{ \CoordinateNormDual{\TripleNormBall}{\LocalIndex}}{\LocalIndex\in\ic{1,d}} \)
    of units balls of the dual coordinate-$k$ norms 
    in Definition~\ref{de:coordinate_norm} 
    is nonincreasing, 
    that is, the following equality and inclusions hold true:
    \begin{equation}
      \TripleNormDualBall=
      \CoordinateNormDual{\TripleNormBall}{d} 
      \subset \cdots \subset \CoordinateNormDual{\TripleNormBall}{\LocalIndex+1}
      \subset \CoordinateNormDual{\TripleNormBall}{\LocalIndex} \subset \cdots 
      \subset \CoordinateNormDual{\TripleNormBall}{1}
      \eqfinp 
      \label{eq:dual_coordinate_norm_unit-balls_inclusions}
    \end{equation}
  \end{itemize}
\end{proposition}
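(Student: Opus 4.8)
The plan is to prove the four bulleted claims in order, deriving each one essentially from the defining formula~\eqref{eq:dual_coordinate_norm_definition} together with~\eqref{eq:K_star=sigma} and the elementary properties~\eqref{eq:level_set_pseudonormlzero} of the level sets.

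\medskip

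\textbf{First bullet (the three expressions for $\CoordinateNormDual{\TripleNorm{\cdot}}{k}$).}
Starting from Definition~\ref{de:coordinate_norm}, I would substitute~\eqref{eq:K_star=sigma}: for each $K$ with $\cardinal{K}\leq k$ we have $\TripleNorm{\dual_K}_{K,\star} = \sigma_{\FlatRR_K \cap \TripleNormBall}\np{\dual_K} = \sigma_{\FlatRR_K \cap \TripleNormSphere}\np{\dual_K}$, using that $\FlatRR_K\cap\TripleNormBall$ has $\FlatRR_K\cap\TripleNormSphere$ as the extreme points / boundary generating its support function (a norm's support function over its ball equals that over its sphere). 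Because $\FlatRR_K\cap\TripleNormBall \subset \FlatRR_K$, and by the self-duality~\eqref{eq:orthogonal_projection_self-dual} of $\pi_K$, one has $\sigma_{\FlatRR_K\cap\TripleNormSphere}\np{\dual_K} = \sigma_{\FlatRR_K\cap\TripleNormSphere}\np{\dual}$, so $\CoordinateNormDual{\TripleNorm{\dual}}{k} = \sup_{\cardinal{K}\leq k}\sigma_{\FlatRR_K\cap\TripleNormSphere}\np{\dual}$, which is the first claimed equality. Then $\sup$ of support functions is the support function of the union, and by~\eqref{eq:level_set_pseudonormlzero} $\bigcup_{\cardinal{K}\leq k}\np{\FlatRR_K\cap\TripleNormSphere} = \LevelSet{\lzero}{k}\cap\TripleNormSphere$, giving the second equality. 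For the third, I note every vector in $\LevelSet{\lzero}{k}\cap\TripleNormSphere$ is, after scaling, a positive multiple of a vector in $\LevelCurve{\lzero}{j}\cap\TripleNormSphere$ for some $j\leq k$; since $\lzero$ is $0$-homogeneous by~\eqref{eq:lzeropseudonorm_is_0-homogeneous}, and since $\LevelCurve{\lzero}{j}\cap\TripleNormSphere \subset \LevelSet{\lzero}{k}\cap\TripleNormSphere \subset \mathrm{cone}\bp{\LevelCurve{\lzero}{k}\cap\TripleNormSphere}$ by appropriate padding of supports, the support functions over $\LevelSet{\lzero}{k}\cap\TripleNormSphere$ and over $\LevelCurve{\lzero}{k}\cap\TripleNormSphere$ coincide.

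\medskip

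\textbf{Second, third, fourth bullets.}
The equality $\TripleNormDual{\cdot} = \CoordinateNormDual{\TripleNorm{\cdot}}{d}$ is immediate from the first bullet with $k=d$: $\LevelSet{\lzero}{d}\cap\TripleNormSphere = \TripleNormSphere$, and $\sigma_{\TripleNormSphere} = \TripleNormDual{\cdot}$ by~\eqref{eq:norm_dual_norm}. Monotonicity in~\eqref{eq:dual_coordinate_norm_inequalities} follows because, as $k$ grows, the supremum in~\eqref{eq:dual_coordinate_norm_definition} is taken over a larger family of subsets $K$ (equivalently, $\LevelSet{\lzero}{k}\cap\TripleNormSphere$ grows with $k$), so $\CoordinateNormDual{\TripleNorm{\cdot}}{k}$ is nondecreasing in $k$; combining with the second bullet pins the last term to $\TripleNormDual{\cdot}$. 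Finally, the unit-ball inclusions~\eqref{eq:dual_coordinate_norm_unit-balls_inclusions} are just the order-reversing correspondence between norms and their unit balls applied to~\eqref{eq:dual_coordinate_norm_inequalities}, with $\CoordinateNormDual{\TripleNormBall}{d} = \TripleNormDualBall$ from the second bullet.

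\medskip

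\textbf{Main obstacle.}
Everything here is bookkeeping except the reduction from the sphere to the level curve in the third equality of the first bullet; the care needed is to show that no mass is lost when replacing $\LevelSet{\lzero}{k}\cap\TripleNormSphere$ by $\LevelCurve{\lzero}{k}\cap\TripleNormSphere$. The cleanest argument is: given $\primal\in\LevelSet{\lzero}{k}\cap\TripleNormSphere$ with $\lzero\np{\primal}=j<k$, perturb $\primal$ on $k-j$ coordinates outside its support by amounts tending to $0$; the resulting vectors lie in $\LevelCurve{\lzero}{k}$, and after renormalizing by the source norm they lie in $\LevelCurve{\lzero}{k}\cap\TripleNormSphere$ and converge to $\primal$, so by continuity of $\dual\mapsto\proscal{\cdot}{\dual}$ the two support functions agree. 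I expect this limiting/padding step to be the only place requiring genuine (if brief) justification; it may already be recorded as a cited fact from~\cite{Chancelier-DeLara:2020_OSM}, in which case it can simply be invoked.
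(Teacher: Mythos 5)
Your proposal is correct and follows essentially the same route as the paper's proof: the first two equalities via the restriction-norm formula~\eqref{eq:K_star=sigma}, self-duality of~$\pi_K$, and the support function of a union, and the third via the density of \( \LevelCurve{\lzero}{k}\cap\TripleNormSphere \) in \( \LevelSet{\lzero}{k}\cap\TripleNormSphere \) (padding \(k-j\) coordinates with vanishing entries and renormalizing), which is exactly the paper's two-step closure argument, the remaining bullets being the same bookkeeping. One small caveat: your intermediate claim \( \LevelSet{\lzero}{k}\cap\TripleNormSphere\subset\mathrm{cone}\bp{\LevelCurve{\lzero}{k}\cap\TripleNormSphere} \) is false as stated (a vector with \(j<k\) nonzero components is a \emph{limit}, not a positive multiple, of vectors with exactly \(k\) nonzero components), but the limiting argument you give under ``Main obstacle'' supersedes it and is the correct one.
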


\begin{proof}
  
  \noindent $\bullet$ 
  For any \( \dual \in \RR^d \), we have
  \begin{align*}
    \CoordinateNormDual{\TripleNorm{\dual}}{k}
    &=
      \sup_{\cardinal{K} \leq k} \TripleNorm{\dual_K}_{K,\star} 
      \tag{by definition~\eqref{eq:dual_coordinate_norm_definition}
      of \( \CoordinateNormDual{\TripleNorm{\dual}}{k} \)}
    \\
    &=
      \sup_{\cardinal{K} \leq k} 
      \sigma_{ \np{ \FlatRR_{K} \cap \TripleNormSphere } }\np{\dual_K}
      \tag{as \( \TripleNorm{\dual_K}_{K,\star} =
      \sigma_{ \np{ \FlatRR_{K} \cap \TripleNormSphere } }\np{\dual_K} \) 
      by~\eqref{eq:K_star=sigma}}
    \\
    &= 
      \sup_{\cardinal{K} \leq k} 
      \sup_{\primal \in \FlatRR_{K} \cap \TripleNormSphere } \proscal{\primal}{\dual_K} 
      \tag{by definition of the support function
      \( \sigma_{ \np{ \FlatRR_{K} \cap \TripleNormSphere } } \)}
    \\
    &= 
      \sup_{\cardinal{K} \leq k} 
      \sup_{\primal \in \FlatRR_{K} \cap \TripleNormSphere } \proscal{\primal}{\dual} 
      \tag{by~\eqref{eq:orthogonal_projection_self-dual} as \( \primal \in \FlatRR_{K} \)}
    \\
    &= 
      \sup_{\cardinal{K} \leq k} 
      \sigma_{ \np{ \FlatRR_{K} \cap \TripleNormSphere } }\np{\dual}
      \tag{by definition of the support function
      \( \sigma_{ \np{ \FlatRR_{K} \cap \TripleNormSphere } } \)}
    \\
    &=
      \sigma_{ \bigcup_{ \cardinal{K} \leq k} 
      \np{ \FlatRR_{K} \cap \TripleNormSphere } }\np{\dual}
      \tag{as the support function turns a union of sets into a supremum}  
    \\
    &=
      \sigma_{ \LevelSet{\lzero}{k} \cap \TripleNormSphere }\np{\dual} 
      \tag{as \( \LevelSet{\lzero}{k} \cap \TripleNormSphere 
      = \bigcup_{ {\cardinal{K} \leq k}} \np{ \FlatRR_{K} \cap \TripleNormSphere }
      \) by~\eqref{eq:level_set_pseudonormlzero}}
      \eqfinp
  \end{align*}
  To finish, we will now prove that 
  \( \sigma_{ \LevelSet{\lzero}{k} \cap \TripleNormSphere } =
  \sigma_{ \LevelCurve{\lzero}{k} \cap \TripleNormSphere } \).
  For this purpose, we show in two steps that 
  \(      \LevelSet{\lzero}{k} \cap \TripleNormSphere 
  =
  \overline{ \LevelCurve{\lzero}{k} \cap \TripleNormSphere } \).

  First, we establish the (known) fact that 
  \( \overline{ \LevelCurve{\lzero}{k} } = \LevelSet{\lzero}{k} \). 
  The inclusion \( \overline{ \LevelCurve{\lzero}{k} } 
  \subset \LevelSet{\lzero}{k} \) is easy because,
  on the one hand, \( \LevelCurve{\lzero}{k} \subset \LevelSet{\lzero}{k} \)
  and, on the other hand, 
  the level set \( \LevelSet{\lzero}{k} \)
  in~\eqref{eq:pseudonormlzero_level_set} is closed,
  as follows from the well-known property that
  the pseudonorm~$\lzero$ is lower semicontinuous.
  There remains to prove the reverse inclusion
  \( \LevelSet{\lzero}{k} \subset \overline{ \LevelCurve{\lzero}{k} } \).
  For this purpose, we consider
  \( \primal \in \LevelSet{\lzero}{k} \). 
  If \( \primal \in \LevelCurve{\lzero}{k} \), obviously 
  \( \primal \in \overline{ \LevelCurve{\lzero}{k} } \).
  Therefore, we suppose that \( \lzero\np{\primal}=l < k \).
  By definition of \( \lzero\np{\primal} \) in~\eqref{eq:pseudo_norm_l0}, there exists 
  \( L \subset \ba{1,\ldots,d} \) such that 
  \( \cardinal{L}=l < k \) and \( \primal = \primal_L \).
  For \( \epsilon > 0 \), define \( \primal^\epsilon \) as
  coinciding with  \( \primal \) except for 
  $k-l$ indices outside~$L$ for which the components are 
  \( \epsilon > 0 \).
  By construction \( \lzero\np{\primal^\epsilon}=k \) and
  \( \primal^\epsilon \to \primal \) when \( \epsilon \to 0 \).
  This proves that 
  \( \LevelSet{\lzero}{k} \subset \overline{ \LevelCurve{\lzero}{k} } \).

  Second, we prove that \( \LevelSet{\lzero}{k} \cap \TripleNormSphere 
  = \overline{ \LevelCurve{\lzero}{k} \cap \TripleNormSphere } \).
  The inclusion 
  \( \overline{ \LevelCurve{\lzero}{k} \cap \TripleNormSphere } 
  \subset \LevelSet{\lzero}{k} \cap \TripleNormSphere \),
  is easy.
  Indeed, 
  \( \overline{ \LevelCurve{\lzero}{k} } = \LevelSet{\lzero}{k} 
  \implies
  \overline{ \LevelCurve{\lzero}{k} \cap \TripleNormSphere } \subset 
  \overline{ \LevelCurve{\lzero}{k} } \cap \overline{ \TripleNormSphere } = 
  \LevelSet{\lzero}{k} \cap \TripleNormSphere \).
  To prove the reverse inclusion
  \( \LevelSet{\lzero}{k} \cap \TripleNormSphere 
  \subset \overline{ \LevelCurve{\lzero}{k} \cap \TripleNormSphere } \),
  we consider \( \primal \in \LevelSet{\lzero}{k} \cap \TripleNormSphere \).
  As we have just seen that \( \LevelSet{\lzero}{k} = 
  \overline{ \LevelCurve{\lzero}{k} }\), 
  we deduce that \( \primal \in \overline{ \LevelCurve{\lzero}{k} }\).
  Therefore, there exists a sequence
  \( \sequence{z_n}{n\in\NN} \) in \( \LevelCurve{\lzero}{k} \)
  such that \( z_n \to \primal \) when \( n \to +\infty \).
  Since \( \primal \in \TripleNormSphere \), we can always suppose that 
  \( z_n \neq 0 \), for all $n\in\NN$. Therefore \( z_n/\TripleNorm{z_n} \) is well
  defined and, when \( n \to +\infty \), 
  we have \( z_n/\TripleNorm{z_n} \to \primal/\TripleNorm{\primal}=\primal \)
  since \( \primal \in \TripleNormSphere = \defset{\primal \in \PRIMAL}{\TripleNorm{\primal} = 1} \).
  Now, on the one hand, 
  \( z_n/\TripleNorm{z_n} \in \LevelCurve{\lzero}{k} \), for all $n\in\NN$,
  and, on the other hand, \( z_n/\TripleNorm{z_n} \in \TripleNormSphere \).
  As a consequence \( z_n/\TripleNorm{z_n} \in \LevelCurve{\lzero}{k} \cap \TripleNormSphere \),
  and we conclude that \( \primal \in 
  \overline{ \LevelCurve{\lzero}{k} \cap \TripleNormSphere } \). 
  Thus, we have proved that 
  \( \LevelSet{\lzero}{k} \cap \TripleNormSphere 
  \subset \overline{ \LevelCurve{\lzero}{k} \cap \TripleNormSphere } \).
  \medskip

  From \( \LevelSet{\lzero}{k} \cap \TripleNormSphere 
  = \overline{ \LevelCurve{\lzero}{k} \cap \TripleNormSphere } \), we get that 
  \( \sigma_{ \LevelSet{\lzero}{k} \cap \TripleNormSphere } =
  \sigma_{ \overline{ \LevelCurve{\lzero}{k} \cap \TripleNormSphere } } = 
  \sigma_{ \LevelCurve{\lzero}{k} \cap \TripleNormSphere } \),
  by \cite[Proposition~7.13]{Bauschke-Combettes:2017}.
  Thus, we have proved all equalities in~\eqref{eq:dual_coordinate_norm}.
  \medskip

  \noindent $\bullet$ 
  By the equality 
  \( \CoordinateNormDual{\TripleNorm{\dual}}{k}
  = \sigma_{ \LevelSet{\lzero}{k} \cap \TripleNormSphere }\np{\dual} \)
  in~\eqref{eq:dual_coordinate_norm},
  we get that, for all $\dual \in \RR^d$,
  \(         \CoordinateNormDual{\TripleNorm{\dual}}{d}
  =
  \sigma_{ \LevelSet{\lzero}{d} \cap \TripleNormSphere }\np{\dual}
  = \sigma_{\TripleNormSphere}\np{\dual}
  = \TripleNormDual{\dual} \)
  since \( \LevelSet{\lzero}{d} = \RR^d \) and by~\eqref{eq:norm_dual_norm}.
  \medskip

  \noindent $\bullet$ 
  The inequalities in~\eqref{eq:dual_coordinate_norm_inequalities}
  easily derive from the very definition~\eqref{eq:dual_coordinate_norm_definition}
  of the dual coordinate-$k$ norms~\( \CoordinateNormDual{\TripleNorm{\cdot}}{k}
  \).
  The last equality is just the equality~\eqref{eq:dual_coordinate_norm-d_equality}.
  \medskip

  \noindent $\bullet$ 
  The equality and the inclusions in~\eqref{eq:dual_coordinate_norm_unit-balls_inclusions}
  directly follow from the inequalities and the equality between norms
  in~\eqref{eq:dual_coordinate_norm_inequalities}.
  \medskip

  This ends the proof. 
\end{proof}

\subsubsubsection{Properties of coordinate-$k$ norms}

We denote the unit sphere~\( \CoordinateNorm{\TripleNormSphere}{k} \) 
and the unit ball~\( \CoordinateNorm{\TripleNormBall}{k} \) 
of the coordinate-$k$ 
norm~\( \CoordinateNorm{\TripleNorm{\cdot}}{k} \) by
\begin{equation}
  \CoordinateNorm{\TripleNormSphere}{k} =
      \defset{\primal \in \RR^d}{%
      \CoordinateNorm{\TripleNorm{\primal}}{k} = 1 } 
      \eqsepv
    \CoordinateNorm{\TripleNormBall}{k} =
      \defset{\primal \in \RR^d}{%
      \CoordinateNorm{\TripleNorm{\primal}}{k} \leq 1 } 
      \eqfinp
\label{eq:coordinate_norm_unit_sphere_ball}
\end{equation}
We adopt the convention 
\( \CoordinateNorm{\TripleNormBall}{0} =\{0\} \)
(although this is not the unit ball of a norm on~$\RR^d$).
%

\begin{proposition}
  \quad
  \begin{itemize}
  \item 
    For \( k \in \ic{1,d} \), the coordinate-$k$ norm
    \( \CoordinateNorm{\TripleNorm{\cdot}}{k} \) 
    has unit ball
    \begin{equation}
      \CoordinateNorm{\TripleNormBall}{k}=
      \closedconvexhull\bp{ \bigcup_{ \cardinal{K} \leq k} 
        \np{ \FlatRR_{K} \cap \TripleNormSphere } } 
      \eqfinv
      \label{eq:coordinate_norm_unit_ball_property}
    \end{equation}
    where \( \closedconvexhull\np{S} \) denotes the closed convex hull
    of a subset \( S \subset \RR^d \).
  \item 
    We have the equality
    \begin{equation}
      \CoordinateNorm{\TripleNorm{\cdot}}{d}
      =
      \TripleNorm{\cdot}
      \eqfinp  
      \label{eq:coordinate_norm-d_equality}
    \end{equation}
  \item 
    The sequence 
    \( \sequence{\CoordinateNorm{\TripleNorm{\cdot}}{\LocalIndex}}{\LocalIndex\in\ic{1,d}} \)
    of coordinate-$k$ norms in Definition~\ref{de:coordinate_norm} is nonincreasing, 
    that is, the following equality and inequalities hold true:
    \begin{equation}
      \TripleNorm{\primal}=
      \CoordinateNorm{\TripleNorm{\primal}}{d}
      \leq \cdots \leq 
      \CoordinateNorm{\TripleNorm{\primal}}{\LocalIndex+1}\leq 
      \CoordinateNorm{\TripleNorm{\primal}}{\LocalIndex}
      \leq \cdots \leq 
      \CoordinateNorm{\TripleNorm{\primal}}{1} 
      \eqsepv \forall \primal \in \RR^d 
      \eqfinp
      \label{eq:coordinate_norm_inequalities}
    \end{equation}
  \item 
    The sequence 
    \( \sequence{ \CoordinateNorm{\TripleNormBall}{\LocalIndex}}{\LocalIndex\in\ic{1,d}} \)
    of units balls of the coordinate-$k$ norms 
    in~\eqref{eq:coordinate_norm_unit_ball_property} is nondecreasing, 
    that is, the following inclusions and equality hold true:
    \begin{equation}
      \CoordinateNorm{\TripleNormBall}{1} 
      \subset \cdots \subset
      \CoordinateNorm{\TripleNormBall}{\LocalIndex} \subset \CoordinateNorm{\TripleNormBall}{\LocalIndex+1} 
      \subset \cdots \subset \CoordinateNorm{\TripleNormBall}{d} 
      = \TripleNormBall
      \eqfinp 
      \label{eq:coordinate_norm_unit-balls_inclusions}
    \end{equation}
  \end{itemize}
  \label{pr:coordinate_norm}
\end{proposition}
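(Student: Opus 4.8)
The plan is to deduce all four items from the preceding proposition by pure duality bookkeeping. The only external inputs needed are three elementary facts about norms on~$\RR^d$: for a norm $N$ with unit ball~$B_N$ and dual norm~$N_{\star}$, one has $N_{\star} = \sigma_{B_N}$ (this is~\eqref{eq:norm_dual_norm}); two bounded subsets of~$\RR^d$ with the same support function have the same closed convex hull; and the correspondence $N \mapsto N_{\star}$ reverses the pointwise order, equivalently $N_{1} \leq N_{2} \iff B_{N_{2}} \subset B_{N_{1}}$. Note that $\CoordinateNorm{\TripleNorm{\cdot}}{k}$, being the dual norm of the norm $\CoordinateNormDual{\TripleNorm{\cdot}}{k}$ (Definition~\ref{de:coordinate_norm}), is a genuine norm, so $\CoordinateNorm{\TripleNormBall}{k}$ is a closed bounded convex set.

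I would first prove the unit-ball formula~\eqref{eq:coordinate_norm_unit_ball_property}, from which the other three items drop out. Applying~\eqref{eq:norm_dual_norm} to the coordinate-$k$ norm $\CoordinateNorm{\TripleNorm{\cdot}}{k}$, whose dual norm is $\CoordinateNormDual{\TripleNorm{\cdot}}{k}$ by Definition~\ref{de:coordinate_norm}, gives $\CoordinateNormDual{\TripleNorm{\cdot}}{k} = \sigma_{\CoordinateNorm{\TripleNormBall}{k}}$. On the other hand, relation~\eqref{eq:dual_coordinate_norm} of the preceding proposition, combined with~\eqref{eq:level_set_pseudonormlzero}, gives $\CoordinateNormDual{\TripleNorm{\cdot}}{k} = \sigma_{ \LevelSet{\lzero}{k} \cap \TripleNormSphere }$ with $\LevelSet{\lzero}{k} \cap \TripleNormSphere = \bigcup_{\cardinal{K} \leq k} \np{ \FlatRR_{K} \cap \TripleNormSphere }$. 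Both $\CoordinateNorm{\TripleNormBall}{k}$ and $\LevelSet{\lzero}{k} \cap \TripleNormSphere$ are bounded (the latter lies inside $\TripleNormSphere$), so having equal support functions they have the same closed convex hull; since the norm ball $\CoordinateNorm{\TripleNormBall}{k}$ is already closed and convex, this yields $\CoordinateNorm{\TripleNormBall}{k} = \closedconvexhull\bp{ \bigcup_{\cardinal{K} \leq k} \np{ \FlatRR_{K} \cap \TripleNormSphere } }$, which is~\eqref{eq:coordinate_norm_unit_ball_property}. This support-function identification is the only step carrying real content, and I expect it to be the main --- though rather mild --- obstacle.

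The remaining items are then immediate. For~\eqref{eq:coordinate_norm-d_equality}: taking $k = d$ in~\eqref{eq:coordinate_norm_unit_ball_property} and using $\LevelSet{\lzero}{d} = \RR^d$ gives $\CoordinateNorm{\TripleNormBall}{d} = \closedconvexhull\np{\TripleNormSphere} = \TripleNormBall$, hence $\CoordinateNorm{\TripleNorm{\cdot}}{d} = \TripleNorm{\cdot}$ (one may alternatively dualize the equality $\CoordinateNormDual{\TripleNorm{\cdot}}{d} = \TripleNormDual{\cdot}$ of~\eqref{eq:dual_coordinate_norm-d_equality}). For the monotonicity~\eqref{eq:coordinate_norm_inequalities}: the sequence of dual coordinate-$k$ norms is nondecreasing in~$k$ by~\eqref{eq:dual_coordinate_norm_inequalities}, so, by the order-reversing property of duality, the sequence of coordinate-$k$ norms $\CoordinateNorm{\TripleNorm{\cdot}}{\LocalIndex}$ is nonincreasing in~$\LocalIndex$, with terminal value $\CoordinateNorm{\TripleNorm{\cdot}}{d} = \TripleNorm{\cdot}$ by the previous point. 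Finally, the ball inclusions~\eqref{eq:coordinate_norm_unit-balls_inclusions} are just~\eqref{eq:coordinate_norm_inequalities} rewritten through $N_{1} \leq N_{2} \iff B_{N_{2}} \subset B_{N_{1}}$ (equivalently, they can be read off directly from~\eqref{eq:coordinate_norm_unit_ball_property}, since the sets $\bigcup_{\cardinal{K} \leq k} \np{ \FlatRR_{K} \cap \TripleNormSphere }$ grow with~$k$, hence so do their closed convex hulls), the terminal equality $\CoordinateNorm{\TripleNormBall}{d} = \TripleNormBall$ being~\eqref{eq:coordinate_norm-d_equality}.
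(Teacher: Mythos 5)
Your proposal is correct and follows essentially the same route as the paper: identify $\CoordinateNormDual{\TripleNorm{\cdot}}{k}$ both as $\sigma_{\CoordinateNorm{\TripleNormBall}{k}}$ (via~\eqref{eq:norm_dual_norm}) and as the support function of $\bigcup_{\cardinal{K}\leq k}\np{\FlatRR_{K}\cap\TripleNormSphere}$ (via~\eqref{eq:dual_coordinate_norm}), conclude~\eqref{eq:coordinate_norm_unit_ball_property} because closed convex sets with equal support functions coincide, and then obtain the remaining three items by dualizing the corresponding statements for the dual coordinate-$k$ norms. The only difference is the order in which you derive the norm inequalities and the ball inclusions from one another (the paper goes through polarity of the dual unit balls first), which is immaterial.
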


\begin{proof}

  \noindent $\bullet$ 
  For any \( \dual \in \RR^d \), we have
  \begin{align*}
    \CoordinateNormDual{\TripleNorm{\dual}}{k}
    &=
      \sup_{\cardinal{K} \leq k} 
      \sigma_{ \np{ \FlatRR_{K} \cap \TripleNormSphere } }\np{\dual}
      \tag{by~\eqref{eq:dual_coordinate_norm}}
    \\
    &=
      \sigma_{ \bigcup_{ \cardinal{K} \leq k} 
      \np{ \FlatRR_{K} \cap \TripleNormSphere } }\np{\dual}
      \tag{as the support function turns a union of sets into a supremum}  
    \\
    &=
      \sigma_{ \closedconvexhull\bp{ \bigcup_{ \cardinal{K} \leq k} 
      \np{ \FlatRR_{K} \cap \TripleNormSphere } } }\np{\dual}
      \tag{by \cite[Proposition~7.13]{Bauschke-Combettes:2017} }
  \end{align*}
  and we conclude that 
  \( \CoordinateNorm{\TripleNormBall}{k} =
  \closedconvexhull\bp{ \bigcup_{ \cardinal{K} \leq k} 
    \np{ \FlatRR_{K} \cap \TripleNormSphere } } \) 
  by~\eqref{eq:norm_dual_norm}.
  Thus, we have proved~\eqref{eq:coordinate_norm_unit_ball_property}.
  \medskip

  \noindent $\bullet$ 
  From the equality~\eqref{eq:dual_coordinate_norm-d_equality},
  we deduce the equality~\eqref{eq:coordinate_norm-d_equality} between the dual norms
by definition of the dual norm.
  \medskip

  \noindent $\bullet$ 
  The equality and inequalities between norms in~\eqref{eq:coordinate_norm_inequalities}
  easily derive from the inclusions and equality between unit balls 
  in~\eqref{eq:coordinate_norm_unit-balls_inclusions}.
  \medskip

  \noindent $\bullet$ 
  The inclusions and equality between unit balls 
  in~\eqref{eq:coordinate_norm_unit-balls_inclusions}
  directly follow from the inclusions and equality between unit balls 
  in~\eqref{eq:dual_coordinate_norm_unit-balls_inclusions}
  and from \( \CoordinateNorm{\TripleNormBall}{\LocalIndex} =
  \bp{  \CoordinateNormDual{\TripleNormBall}{\LocalIndex} }^{\odot} \),
  the polar set of~\(  \CoordinateNormDual{\TripleNormBall}{\LocalIndex} \).
  \medskip

  This ends the proof. 
\end{proof}

We recall that the normed space 
\( \bp{\RR^d,\TripleNorm{\cdot}} \) is said to be \emph{strictly convex}
if the unit ball~$\TripleNormBall$ 
(of the norm~$\TripleNorm{\cdot}$) is \emph{rotund}, that is,
if all points of the unit sphere~$\TripleNormSphere$ are extreme points
of the unit ball~$\TripleNormBall$.
The normed space \( \bp{\RR^d,\norm{\cdot}_{p}} \), equipped with 
the $\ell_p$-norm~$\norm{\cdot}_{p}$ (for $p\in [1,\infty]$), is strictly
convex if and only if $p\in ]1,\infty[$.

We now show that the sequences 
\( \sequence{\CoordinateNorm{\TripleNorm{\cdot}}{\LocalIndex}}{\LocalIndex\in\ic{1,d}} \)
of coordinate-$k$ norms (in Definition~\ref{de:coordinate_norm}) 
are naturally decreasingly graded with respect to the \lzeropseudonorm\
(as in Definition~\ref{de:decreasingly_graded}).
Part of the proof relies upon the forthcoming
Lemma~\ref{lem:level_set_pseudonormlzero_intersection_sphere_rotund}. 

\begin{proposition} 
  \label{pr:coordinate_norm_graded}
  \quad
  \begin{enumerate}
  \item
    \label{it:coordinate_norm_graded}
    The nonincreasing sequence 
    \( \sequence{\CoordinateNorm{\TripleNorm{\cdot}}{\LocalIndex}}{\LocalIndex\in\ic{1,d}} \)
    of coordinate-$k$ norms 
    is decreasingly graded with respect to the \lzeropseudonorm, that is,
    for any \( l\in\ic{1,d} \), 
    \begin{subequations}
      \begin{equation}
        \lzero\np{\primal} \leq l \implies 
        \TripleNorm{\primal}=
        \CoordinateNorm{\TripleNorm{\primal}}{l} 
        \eqsepv \forall \primal \in \RR^d 
        \eqfinp
        \label{eq:coordinate_norm_graded_b}
      \end{equation}
    \item
      \label{it:coordinate_norm_strictly_graded}
      If the normed space 
      \( \bp{\RR^d,\TripleNorm{\cdot}} \) is strictly convex, then 
      the nonincreasing sequence 
      \( \sequence{\CoordinateNorm{\TripleNorm{\cdot}}{\LocalIndex}}{\LocalIndex\in\ic{1,d}} \)
      of coordinate-$k$ norms 
      is strictly decreasingly graded with respect to the \lzeropseudonorm, that
      is, for any \( l\in\ic{1,d} \), 
      \begin{equation}
        \lzero\np{\primal} \leq l
        \iff 
        \TripleNorm{\primal}=
        \CoordinateNorm{\TripleNorm{\primal}}{l} 
        \eqsepv \forall \primal \in \RR^d 
        \eqfinp
        \label{eq:coordinate_norm_strictly_graded_b}
      \end{equation}
    \end{subequations}
  \end{enumerate}
\end{proposition}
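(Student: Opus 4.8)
The plan is to verify the level-$l$ form~\eqref{eq:decreasingly_graded_b} (resp.\ the equivalence~\eqref{eq:strictly_decreasingly_graded_b}) of Definition~\ref{de:decreasingly_graded}, using that the sequence $\sequence{\CoordinateNorm{\TripleNorm{\cdot}}{\LocalIndex}}{\LocalIndex\in\ic{1,d}}$ is already nonincreasing by~\eqref{eq:coordinate_norm_inequalities} and that $\CoordinateNorm{\TripleNorm{\cdot}}{d}=\TripleNorm{\cdot}$ by~\eqref{eq:coordinate_norm-d_equality}. So what remains is, for item~\ref{it:coordinate_norm_graded}, the implication $\lzero\np{\primal}\leq l \implies \CoordinateNorm{\TripleNorm{\primal}}{l}=\TripleNorm{\primal}$, and, for item~\ref{it:coordinate_norm_strictly_graded}, its converse. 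I would first dispatch $\primal=0$ (trivial, since $\lzero\np{0}=0\leq l$ and all norms vanish), then set $x=\primal/\TripleNorm{\primal}\in\TripleNormSphere$; by $0$-homogeneity~\eqref{eq:lzeropseudonorm_is_0-homogeneous} one has $\lzero\np{x}=\lzero\np{\primal}$, and since $\CoordinateNorm{\TripleNorm{x}}{l}\geq\TripleNorm{x}=1$ by~\eqref{eq:coordinate_norm_inequalities}, the equality $\CoordinateNorm{\TripleNorm{\primal}}{l}=\TripleNorm{\primal}$ is equivalent to $x\in\CoordinateNorm{\TripleNormBall}{l}$. Everything then boils down to comparing, for $x\in\TripleNormSphere$, the two conditions $\lzero\np{x}\leq l$ and $x\in\CoordinateNorm{\TripleNormBall}{l}$.

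For item~\ref{it:coordinate_norm_graded} I would get the easy direction straight from the unit-ball formula~\eqref{eq:coordinate_norm_unit_ball_property}: if $\lzero\np{x}\leq l$, then with $K=\Support{x}$ one has $\cardinal{K}\leq l$ and $x\in\FlatRR_{K}\cap\TripleNormSphere$, which is one of the sets whose union appears in~\eqref{eq:coordinate_norm_unit_ball_property}; hence $x\in\CoordinateNorm{\TripleNormBall}{l}$, which is~\eqref{eq:coordinate_norm_graded_b}.

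For item~\ref{it:coordinate_norm_strictly_graded} I would prove the converse under strict convexity. Let $x\in\CoordinateNorm{\TripleNormBall}{l}\cap\TripleNormSphere$. Strict convexity makes $x$ an extreme point of $\TripleNormBall$; since $\CoordinateNorm{\TripleNormBall}{l}\subset\TripleNormBall$ by~\eqref{eq:coordinate_norm_unit-balls_inclusions} and $x\in\CoordinateNorm{\TripleNormBall}{l}$, a point extreme in the larger convex set that belongs to the smaller one is extreme in the smaller one, so $x$ is an extreme point of $\CoordinateNorm{\TripleNormBall}{l}$. By~\eqref{eq:coordinate_norm_unit_ball_property} together with~\eqref{eq:level_set_pseudonormlzero}, $\CoordinateNorm{\TripleNormBall}{l}=\closedconvexhull\np{\LevelSet{\lzero}{l}\cap\TripleNormSphere}$ with $\LevelSet{\lzero}{l}\cap\TripleNormSphere=\bigcup_{\cardinal{K}\leq l}\np{\FlatRR_{K}\cap\TripleNormSphere}$ a finite union of compact sets, hence compact; invoking the forthcoming Lemma~\ref{lem:level_set_pseudonormlzero_intersection_sphere_rotund} (a Milman-type partial converse to Krein--Milman), every extreme point of $\CoordinateNorm{\TripleNormBall}{l}$ lies in $\LevelSet{\lzero}{l}\cap\TripleNormSphere$, so $x\in\LevelSet{\lzero}{l}$, i.e.\ $\lzero\np{\primal}=\lzero\np{x}\leq l$. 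Together with item~\ref{it:coordinate_norm_graded}, this gives~\eqref{eq:coordinate_norm_strictly_graded_b}.

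The main obstacle is this converse. It rests on two facts that genuinely use rotundity of the source ball and that I would draw from Lemma~\ref{lem:level_set_pseudonormlzero_intersection_sphere_rotund}: that a point of $\TripleNormSphere$ lying in the smaller ball $\CoordinateNorm{\TripleNormBall}{l}$ remains an extreme point of that smaller ball, and that $\CoordinateNorm{\TripleNormBall}{l}$ is the closed convex hull of the \emph{compact} set $\LevelSet{\lzero}{l}\cap\TripleNormSphere$, so that Milman's theorem applies and extreme points cannot escape this set. Strict convexity is indispensable here: for the source norm~$\norm{\cdot}_{1}$ one has $\CoordinateNorm{\TripleNorm{\cdot}}{k}=\norm{\cdot}_{1}$ for all $k\in\ic{1,d}$ (see Table~\ref{tab:Examples}), so $\CoordinateNorm{\TripleNorm{\primal}}{l}=\TripleNorm{\primal}$ holds for every~$\primal$ while $\lzero\np{\primal}\leq l$ fails in general.
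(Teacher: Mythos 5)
Your argument is correct and follows essentially the same route as the paper: the easy implication comes from normalizing to the unit sphere and using the unit-ball formula~\eqref{eq:coordinate_norm_unit_ball_property}, while the converse under strict convexity reduces to the identity \( \LevelSet{\lzero}{l}\cap\TripleNormSphere=\CoordinateNorm{\TripleNormBall}{l}\cap\TripleNormSphere \) of Lemma~\ref{lem:level_set_pseudonormlzero_intersection_sphere_rotund}. The only difference is that you unfold that lemma's content yourself (extreme-point inheritance from \( \TripleNormBall \) to the smaller ball, then Milman's partial converse to Krein--Milman applied to the compact set \( \LevelSet{\lzero}{l}\cap\TripleNormSphere \)), where the paper simply invokes the rotundity result cited from the companion paper; both justifications are sound.
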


\begin{proof}
  \quad 

  \noindent $\bullet$ 
  We prove Item~\ref{it:coordinate_norm_graded}.
  As the sequence 
  \( \sequence{\CoordinateNorm{\TripleNorm{\cdot}}{\LocalIndex}}{\LocalIndex\in\ic{1,d}} \)
  of coordinate-$k$ norms is nonincreasing
  by~\eqref{eq:dual_coordinate_norm_inequalities},
  it suffices to show that 
  \eqref{eq:decreasingly_graded_b} holds true --- that is,
  that \eqref{eq:coordinate_norm_graded_b} holds true ---
  to prove that the sequence is decreasingly graded with respect to the \lzeropseudonorm\
  (see Definition~\ref{de:decreasingly_graded}).

  Now,
  for any \( \primal \in \RR^d \) and
  for any \( k \in \ic{1,d} \), we have\footnote{%
In what follows, by ``or'' we mean the so-called \emph{exclusive or}
    (exclusive disjunction). Thus, every ``or'' should be understood as ``or
    $\primal\not=0$ and'' \label{ft:exclusive_or}}
  
  \begin{align*}
    \primal \in \LevelSet{\lzero}{k} 
    &\iff 
      \primal=0 \text{ or }
      \frac{\primal}{\TripleNorm{\primal}} \in \LevelSet{\lzero}{k}
      \tag{by 0-homogeneity~\eqref{eq:lzeropseudonorm_is_0-homogeneous}
      of the \lzeropseudonorm, and
      by definition~\eqref{eq:pseudonormlzero_level_set}
      of $\LevelSet{\lzero}{k}$ }
    \\
    &\iff 
      \primal=0 \text{ or }
      \frac{\primal}{\TripleNorm{\primal}} \in 
      \LevelSet{\lzero}{k} \cap \TripleNormSphere 
      \tag{as \( \frac{\primal}{\TripleNorm{\primal}} \in \TripleNormSphere \)
      by definition~\eqref{eq:triplenorm_unit_sphere} of the unit sphere~$\TripleNormSphere$ }
    \\
    &\iff 
      \primal=0 \text{ or }
      \frac{\primal}{\TripleNorm{\primal}} \in 
      \bigcup_{ {\cardinal{K} \leq k}} \np{ \FlatRR_{K} \cap \TripleNormSphere }
      \tag{as \( \LevelSet{\lzero}{k} 
      = \bigcup_{ {\cardinal{K} \leq k} } \FlatRR_{K} \) by~\eqref{eq:level_set_pseudonormlzero}}
    \\
    &\implies
      \primal=0 \text{ or }
      \frac{\primal}{\TripleNorm{\primal}} \in 
      \CoordinateNorm{\TripleNormBall}{k} 
      \tag{as \( \CoordinateNorm{\TripleNormBall}{k}=
      \closedconvexhull\bp{ \bigcup_{ \cardinal{K} \leq k} 
      \np{ \FlatRR_{K} \cap \TripleNormSphere } } \) by~\eqref{eq:coordinate_norm_unit_ball_property}}
    \\
    &\implies
      \primal=0 \text{ or }
      \CoordinateNorm{\TripleNorm{\frac{\primal}{\TripleNorm{\primal}}}}{k}
      \leq 1 
      \tag{since \( \CoordinateNorm{\TripleNormBall}{k} \) is the unit ball
      of the norm \( \CoordinateNorm{\TripleNorm{\cdot}}{k} \) 
      by~\eqref{eq:coordinate_norm_unit_sphere_ball} }
    \\
    &\implies
      \CoordinateNorm{\TripleNorm{\primal}}{k}
      \leq \TripleNorm{\primal}
    \\
    &\implies
      \CoordinateNorm{\TripleNorm{\primal}}{k}
      \leq \TripleNorm{\primal}
      =\CoordinateNorm{\TripleNorm{\primal}}{d}
      \tag{where the last equality comes
      from~\eqref{eq:coordinate_norm_inequalities} } 
    \\
    &\implies
      \CoordinateNorm{\TripleNorm{\primal}}{k}
      = \CoordinateNorm{\TripleNorm{\primal}}{d}
      \tag{as \( \CoordinateNorm{\TripleNorm{\primal}}{k}
      \geq \CoordinateNorm{\TripleNorm{\primal}}{d} \) by~\eqref{eq:coordinate_norm_inequalities}} 
      \eqfinp
  \end{align*}
  Therefore, we have obtained~\eqref{eq:coordinate_norm_graded_b}.
  \medskip

  \noindent $\bullet$ 
  We prove Item~\ref{it:coordinate_norm_strictly_graded}.
  As the sequence 
  \( \sequence{\CoordinateNorm{\TripleNorm{\cdot}}{\LocalIndex}}{\LocalIndex\in\ic{1,d}} \)
  of coordinate-$k$ norms is nonincreasing
  by~\eqref{eq:dual_coordinate_norm_inequalities},
  it suffices to show that 
  \eqref{eq:strictly_decreasingly_graded_b} holds true --- that is,
  that \eqref{eq:coordinate_norm_strictly_graded_b} holds true ---
  to prove that the sequence is strictly decreasingly graded with respect to the \lzeropseudonorm\
  (see Definition~\ref{de:decreasingly_graded}).

  We suppose that the normed space 
  \( \bp{\RR^d,\TripleNorm{\cdot}} \) is strictly convex. Then, 
  for any \( \primal \in \RR^d \) and
  for any \( k \in \ic{1,d} \), we have \footnote{%
See Footnote~\ref{ft:exclusive_or}.}
  \begin{align*}
    \primal \in \LevelSet{\lzero}{k} 
    &\iff 
      \primal=0 \text{ or }
      \frac{\primal}{\TripleNorm{\primal}} \in \LevelSet{\lzero}{k}
      \intertext{by 0-homogeneity~\eqref{eq:lzeropseudonorm_is_0-homogeneous}
      of the \lzeropseudonorm, and
      by definition~\eqref{eq:pseudonormlzero_level_set}
      of $\LevelSet{\lzero}{k}$ }
    &\iff 
      \primal=0 \text{ or }
      \frac{\primal}{\TripleNorm{\primal}} \in 
      \LevelSet{\lzero}{k} \cap \TripleNormSphere 
      \tag{as \( \frac{\primal}{\TripleNorm{\primal}} \in \TripleNormSphere \)
      by definition~\eqref{eq:triplenorm_unit_sphere} of the unit sphere~$\TripleNormSphere$ }
    \\
    &\iff 
      \primal=0 \text{ or }
      \frac{\primal}{\TripleNorm{\primal}} \in 
      \CoordinateNorm{\TripleNormBall}{k} \cap \TripleNormSphere 
      \intertext{as \( \LevelSet{\lzero}{k} \cap \TripleNormSphere =
      \CoordinateNorm{\TripleNormBall}{k} \cap \TripleNormSphere \) by~\eqref{eq:level_set_l0_inter_sphere_b} 
      since the assumption of
      Lemma~\ref{lem:level_set_pseudonormlzero_intersection_sphere_rotund} 
      is satisfied, that is, the normed space 
      \( \bp{\RR^d,\TripleNorm{\cdot}} \) is strictly convex} 
    &\iff 
      \primal=0 \text{ or }
      \CoordinateNorm{\TripleNorm{\frac{\primal}{\TripleNorm{\primal}}}}{k}
      \leq 1 
      \tag{since \( \CoordinateNorm{\TripleNormBall}{k} \) is the unit ball
      of the norm \( \CoordinateNorm{\TripleNorm{\cdot}}{k} \) 
      by~\eqref{eq:coordinate_norm_unit_sphere_ball} }
    \\
    &\iff 
      \CoordinateNorm{\TripleNorm{\primal}}{k}
      \leq \TripleNorm{\primal}
    \\
    &\iff 
      \CoordinateNorm{\TripleNorm{\primal}}{k}
      \leq \TripleNorm{\primal}
      =\CoordinateNorm{\TripleNorm{\primal}}{d}
      \tag{where the last equality comes
      from~\eqref{eq:coordinate_norm_inequalities} } 
    \\
    &\iff 
      \CoordinateNorm{\TripleNorm{\primal}}{k}
      = \CoordinateNorm{\TripleNorm{\primal}}{d}
      \tag{as \( \CoordinateNorm{\TripleNorm{\primal}}{k}
      \geq \CoordinateNorm{\TripleNorm{\primal}}{d} \) by~\eqref{eq:coordinate_norm_inequalities}} 
      \eqfinp
  \end{align*}
  Therefore, we have obtained~\eqref{eq:coordinate_norm_strictly_graded_b}.
  \medskip

  This ends the proof. 
\end{proof}

\begin{table}[htbp]
  \centering
  \begin{tabular}{||l||c|c||}
    \hline \hline
    & \multicolumn{2}{c||}{ \( \sequence{\CoordinateNorm{\TripleNorm{\cdot}}{\LocalIndex}}{\LocalIndex\in\ic{1,d}} \)}
    \\
    & graded & strictly graded 
    \\ 
    \hline \hline
    \( \TripleNorm{\cdot} \) is any norm & \checkmark &
    \\
    \hline
    \( \np{\RR^d,\TripleNorm{\cdot}} \) 
    is strictly convex && \checkmark 
    \\
    \hline \hline
  \end{tabular}
  \caption{Table of results. It reads as follows:
    to obtain that the sequence \( \sequence{\CoordinateNorm{\TripleNorm{\cdot}}{\LocalIndex}}{\LocalIndex\in\ic{1,d}} \)
    be graded (second column), it suffices that 
    \( \TripleNorm{\cdot} \) be any norm;
    to obtain that the sequence \( \sequence{\CoordinateNorm{\TripleNorm{\cdot}}{\LocalIndex}}{\LocalIndex\in\ic{1,d}} \)
    be strictly graded (third column), it suffices that 
    \( \np{\RR^d,\TripleNorm{\cdot}} \) 
    be strictly convex.
    \label{tab:results}}
\end{table}

Table~\ref{tab:results} summarizes the results of Proposition~\ref{pr:coordinate_norm_graded}.
As an application with any $\ell_p$-norm~$\norm{\cdot}_{p}$ 
for source norm (for $p\in [1,\infty]$),
we obtain that the nonincreasing sequence 
\( \sequence{\Norm{\cdot}_{p,\LocalIndex}^{\mathrm{sn}}}{\LocalIndex\in\ic{1,d}} \)
of $(p,k)$-support norms (see Table~\ref{tab:Examples})
is  strictly decreasingly graded \wrt\ the \lzeropseudonorm\
for $p\in ]1,\infty[$. This gives, by~\eqref{eq:strictly_decreasingly_graded_c}:
\begin{subequations}
  \begin{equation}
    \lzero\np{\primal} 
    = \min \Bset{k \in \ic{1,d} }%
    { \Norm{\primal}_{p,k}^{\mathrm{sn}} = \Norm{\primal}_{p} } 
    \eqsepv \forall \primal \in \RR^d
    \eqsepv \forall p\in ]1,\infty[
    \eqfinp
  \end{equation}
  We also have that the sequence 
  \( \sequence{\Norm{\cdot}_{p,\LocalIndex}^{\mathrm{sn}}}{\LocalIndex\in\ic{1,d}} \)
  is decreasingly graded with respect to the \lzeropseudonorm\
  for $p\in [1,\infty]$. Looking at Table~\ref{tab:Examples}, the only
  interesting case is for \( p=\infty \), giving, by~\eqref{eq:decreasingly_graded_c}:
  \begin{equation}
    \lzero\np{\primal} 
    \geq \min \Bset{k \in \ic{1,d} }%
    { \Norm{\primal}_{\infty,k}^{\mathrm{sn}} = \Norm{\primal}_{\infty} } 
    \eqsepv \forall \primal \in \RR^d
    \eqfinp
  \end{equation}
\end{subequations}

\begin{lemma}  %
  \label{lem:level_set_pseudonormlzero_intersection_sphere_rotund}
  Let $\TripleNorm{\cdot}$ be a norm on~$\RR^d$.
  If the normed space 
  \( \bp{\RR^d,\TripleNorm{\cdot}} \) is strictly convex,
  we have the equality
  \begin{equation}
    \LevelSet{\lzero}{k} \cap \TripleNormSphere 
    =
    \CoordinateNorm{\TripleNormBall}{k} \cap \TripleNormSphere 
    \eqsepv \forall k \in \ic{0,d} 
    \eqfinv
    \label{eq:level_set_l0_inter_sphere_b}
  \end{equation}
  where \( \LevelSet{\lzero}{k} \) is the level set
  in~\eqref{eq:pseudonormlzero_level_set}
  of the \lzeropseudonorm\ in~\eqref{eq:pseudo_norm_l0},
  where $\TripleNormSphere$ is the unit sphere
  in~\eqref{eq:triplenorm_unit_sphere},
  and where \( \CoordinateNorm{\TripleNormBall}{k} \) 
  in~\eqref{eq:coordinate_norm_unit_sphere_ball}
is the unit ball
  of the norm \( \CoordinateNorm{\TripleNorm{\cdot}}{k} \).
\end{lemma}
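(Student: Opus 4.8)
The plan is to prove the two inclusions separately. The inclusion $\LevelSet{\lzero}{k} \cap \TripleNormSphere \subset \CoordinateNorm{\TripleNormBall}{k} \cap \TripleNormSphere$ holds for an arbitrary source norm and is immediate: if $\primal \in \LevelSet{\lzero}{k} \cap \TripleNormSphere$ then, by~\eqref{eq:level_set_pseudonormlzero}, $\primal \in \FlatRR_K$ for some $K \subset \ic{1,d}$ with $\cardinal{K} \leq k$, hence $\primal \in \FlatRR_K \cap \TripleNormSphere \subset \CoordinateNorm{\TripleNormBall}{k}$ by the unit-ball formula~\eqref{eq:coordinate_norm_unit_ball_property}, while $\primal \in \TripleNormSphere$ by assumption. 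The case $k = 0$ is trivial as well, since $\LevelSet{\lzero}{0} = \{0\} = \CoordinateNorm{\TripleNormBall}{0}$ and $0 \notin \TripleNormSphere$, so both sides are empty. Thus everything sits in the reverse inclusion for $k \in \ic{1,d}$, which is where strict convexity is used.

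For that reverse inclusion, I would first get rid of the closure in~\eqref{eq:coordinate_norm_unit_ball_property}. Set $S_k = \bigcup_{\cardinal{K} \leq k} \bp{\FlatRR_K \cap \TripleNormSphere}$. This is a finite union of intersections of a closed linear subspace of~$\RR^d$ with the compact unit sphere~$\TripleNormSphere$, hence $S_k$ is compact. Since the convex hull of a compact subset of~$\RR^d$ is compact (a standard consequence of Carathéodory's theorem), and in particular closed, the closed convex hull in~\eqref{eq:coordinate_norm_unit_ball_property} coincides with the plain convex hull; consequently every $\primal \in \CoordinateNorm{\TripleNormBall}{k}$ is a \emph{finite} convex combination $\primal = \sum_{i} \lambda_i s_i$ with $s_i \in S_k$, $\lambda_i > 0$ and $\sum_i \lambda_i = 1$.

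Now take $\primal \in \CoordinateNorm{\TripleNormBall}{k} \cap \TripleNormSphere$ and write it as such a convex combination $\primal = \sum_i \lambda_i s_i$. Each $s_i$ lies on~$\TripleNormSphere$, hence in the unit ball~$\TripleNormBall$. Since $\bp{\RR^d,\TripleNorm{\cdot}}$ is strictly convex, the point $\primal \in \TripleNormSphere$ is an extreme point of~$\TripleNormBall$; but an extreme point of a convex set that is written as a convex combination with strictly positive weights of points of that set must equal each of those points (a short induction on the number of terms, splitting off one term at a time). Therefore $s_i = \primal$ for every $i$, so $\primal = s_i \in S_k$; by definition of $S_k$ there is $K$ with $\cardinal{K} \leq k$ and $\primal \in \FlatRR_K$, whence $\lzero\np{\primal} \leq \cardinal{K} \leq k$, i.e.\ $\primal \in \LevelSet{\lzero}{k}$ by~\eqref{eq:pseudonormlzero_level_set}. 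Together with $\primal \in \TripleNormSphere$ this yields $\primal \in \LevelSet{\lzero}{k} \cap \TripleNormSphere$, as wanted.

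The only genuinely load-bearing step is the collapse of the convex combination; one may equivalently phrase it through the equality case of the triangle inequality, noting that $1 = \TripleNorm{\primal} = \TripleNorm{\sum_i \lambda_i s_i} \leq \sum_i \lambda_i \TripleNorm{s_i} = 1$ forces equality, which under rotundity of~$\TripleNormBall$ makes all the vectors $\lambda_i s_i$ — hence all the $s_i$, as they have unit norm — equal. I expect no real obstacle beyond stating this rotundity fact cleanly and the (standard, finite-dimensional) remark that the convex hull of a compact set is closed; the structural identities $\LevelSet{\lzero}{k} = \bigcup_{\cardinal{K}\leq k}\FlatRR_K$ and $\CoordinateNorm{\TripleNormBall}{k} = \closedconvexhull\bp{\bigcup_{\cardinal{K}\leq k}(\FlatRR_K\cap\TripleNormSphere)}$ are already available from~\eqref{eq:level_set_pseudonormlzero} and~\eqref{eq:coordinate_norm_unit_ball_property}.
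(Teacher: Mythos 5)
Your proof is correct, and it takes a route that is self-contained where the paper's is not. The paper's own proof imports a general fact from the companion paper (\cite[Proposition~\ref{OSM-pr:level_set_pseudonormlzero_intersection_sphere_rotund}]{Chancelier-DeLara:2020_OSM}): if the unit ball is rotund and $A$ is a \emph{closed} subset of~$\TripleNormSphere$, then $A=\closedconvexhull(A)\cap\TripleNormSphere$; it applies this to $A=\LevelSet{\lzero}{k}\cap\TripleNormSphere$ (closed by lower semicontinuity of~$\lzero$) and then identifies $\closedconvexhull\bp{\LevelSet{\lzero}{k}\cap\TripleNormSphere}$ with $\CoordinateNorm{\TripleNormBall}{k}$ via~\eqref{eq:level_set_pseudonormlzero} and~\eqref{eq:coordinate_norm_unit_ball_property}. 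You instead prove the needed special case of that cited fact from scratch: the compactness of $\bigcup_{\cardinal{K}\leq k}\bp{\FlatRR_{K}\cap\TripleNormSphere}$ plus Carath\'eodory lets you replace the closed convex hull by the plain convex hull, reduce to finite convex combinations, and then collapse them by the extreme-point (equivalently, equality-in-the-triangle-inequality) property of a rotund ball. The underlying use of strict convexity is the same in both arguments --- a point of the sphere cannot be a nontrivial convex combination of points of the ball --- but your version buys independence from the companion paper and sidesteps all closure bookkeeping, at the cost of invoking compactness of the convex hull of a compact set; the paper's version is shorter on the page but opaque without the external reference. All the individual steps you give (the easy inclusion, the empty $k=0$ case, the induction collapsing the convex combination) check out.
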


\begin{proof} 
  It is proved in 
\cite[Proposition~\ref{OSM-pr:level_set_pseudonormlzero_intersection_sphere_rotund}]{Chancelier-DeLara:2020_OSM} that,
  if the unit ball~$\TripleNormBall$ is rotund --- that is, 
  if the normed space 
  \( \bp{\RR^d,\TripleNorm{\cdot}} \) is strictly convex --- 
  and 
  if $A$ is a closed subset of $\TripleNormSphere$,
  then $A = \closedconvexhull(A) \cap \TripleNormSphere$.

  Now, we turn to the proof. 
  First, we observe that the level set \( \LevelSet{\lzero}{k} \) is closed
  because the pseudonorm~$\lzero$ is lower semi continuous.
  Second, we have
  \begin{align*}
    \LevelSet{\lzero}{k} \cap \TripleNormSphere 
    &= 
      \closedconvexhull\bp{\LevelSet{\lzero}{k} \cap \TripleNormSphere} 
      \cap \TripleNormSphere 
      \intertext{because 
      \( \LevelSet{\lzero}{k} \cap \TripleNormSphere \subset \TripleNormSphere \) 
      and is closed, and because the unit ball~$\TripleNormBall$ is rotund }
    &= 
      \closedconvexhull\bp{ \bigcup_{ {\cardinal{K} \leq k}} 
      \np{ \FlatRR_{K} \cap \TripleNormSphere } }
      \cap \TripleNormSphere 
      \tag{by~\eqref{eq:level_set_pseudonormlzero} }
    \\
    &= \CoordinateNorm{\TripleNormBall}{k} \cap \TripleNormSphere 
      \tag{by~\eqref{eq:coordinate_norm_unit_ball_property} }
      \eqfinp
  \end{align*}

  This ends the proof. 
\end{proof}

\section{The \Capra-conjugacy and the \lzeropseudonorm}
\label{The_Capra_conjugacy_and_the_lzeropseudonorm}

We introduce the coupling~\Capra\
in~\S\ref{Constant_along_primal_rays_coupling}.
Then, we provide formulas 
for \Capra-conjugates of functions of the \lzeropseudonorm\
in~\S\ref{CAPRA_conjugates_related_to_the_pseudo_norm},
for \Capra-biconjugates of functions of the \lzeropseudonorm\
in~\S\ref{CAPRA_biconjugates_related_to_the_pseudo_norm},
and 
for \Capra-subdifferentials of functions of the \lzeropseudonorm\
in~\S\ref{Capra-subdifferentials_related_to_the_lzeropseudonorm}.

We work on the Euclidian space~$\RR^d$
(with $d \in \NN^*$), equipped with the scalar product 
\( \proscal{\cdot}{\cdot} \) (but not necessarily with the Euclidian norm).
As we manipulate functions with values in~$\barRR = [-\infty,+\infty] $,
we adopt the Moreau \emph{lower ($\LowPlus$) and upper ($\UppPlus$) additions} \cite{Moreau:1970},
which extend the usual addition~($+$) with 
\( \np{+\infty} \LowPlus \np{-\infty}=\np{-\infty} \LowPlus \np{+\infty}=-\infty \) and
\( \np{+\infty} \UppPlus \np{-\infty}=\np{-\infty} \UppPlus \np{+\infty}=+\infty \).
For any subset \( \Uncertain \subset \UNCERTAIN \) of a set~\( \UNCERTAIN \),
$\delta_{\Uncertain} : \UNCERTAIN \to \barRR $ denotes the \emph{characteristic function} of the
set~$\Uncertain$: 
\( \delta_{\Uncertain}\np{\uncertain} = 0 \) if \( \uncertain \in \Uncertain \),
and \( \delta_{\Uncertain}\np{\uncertain} = +\infty \) 
if \( \uncertain \not\in \Uncertain \).

\subsection{Constant along primal rays coupling (\Capra)}
\label{Constant_along_primal_rays_coupling}

We introduce 
the coupling~\Capra, which is a special case of
one-sided linear coupling, 
as introduced in~\cite{Chancelier-DeLara:2021_ECAPRA_JCA}.
Fenchel-Moreau conjugacies are recalled in Appendix~\ref{Appendix}.

\begin{definition}
  Let $\TripleNorm{\cdot}$ be a norm on~$\RR^d$.
  We define the \emph{constant along primal rays coupling}~$\CouplingCapra$, or \Capra,
  between $\RR^d$ and itself by
  \begin{equation}
    \forall \dual \in \RR^d \eqsepv 
    \CouplingCapra\np{\primal, \dual} =
    \frac{ \proscal{\primal}{\dual} }{ \TripleNorm{\primal} }
      \eqsepv \forall \primal \in \RR^d\backslash\{0\} 
      \mtext{ and }
      \CouplingCapra\np{0, \dual} = 0
      \eqfinp
    \label{eq:coupling_CAPRA}
  \end{equation}
  \label{de:coupling_CAPRA}
\end{definition}
We stress the point that, in~\eqref{eq:coupling_CAPRA},
the Euclidian scalar product \( \proscal{\primal}{\dual} \)
and the norm term \( \TripleNorm{\primal} \) need not be related, 
that is, the norm~$\TripleNorm{\cdot}$ is not necessarily Euclidian.

The coupling \Capra\ has the property of being 
constant along primal rays, hence the acronym~\Capra\
(Constant Along Primal RAys).
We introduce 
the primal \emph{normalization mapping}~$\normalized$,
from $\RR^d$ towards the unit sphere \( \TripleNormSphere \)
united with $\{0\}$, 
as follows:
\begin{equation}
  \normalized : \RR^d \to \TripleNormSphere \cup \{0\} 
  \eqsepv
  \normalized\np{\primal}=
\frac{\primal}{\TripleNorm{\primal}}
    \mtext{ if } \primal \neq 0 
     \mtext{ and }
 \normalized\np{0}= 0
\eqfinp
  \label{eq:normalization_mapping}
\end{equation}
With these notations, the coupling~\Capra\ 
in~\eqref{eq:coupling_CAPRA} is a special case
of one-sided linear coupling, 
the \emph{Fenchel coupling after primal normalization}:
\(   \CouplingCapra\np{\primal, \dual} = 
\proscal{\normalized\np{\primal}}{\dual} \),
\( \forall \primal \in \RR^d \), \( \forall \dual \in \RR^d \).
We will see below that the \Capra-conjugacy, 
induced by the coupling~\Capra, shares some relations 
with the Fenchel conjugacy 
(see Appendix~\ref{Appendix}).

\subsubsubsection{\Capra-conjugates and biconjugates}

Here are expressions for the \Capra-conjugates and biconjugates
of a function. The following Proposition simply is
\cite[Proposition~2.5]{Chancelier-DeLara:2021_ECAPRA_JCA}
with the normalization mapping~\(\normalized\) in~\eqref{eq:normalization_mapping}. 

\begin{subequations}
  \begin{proposition}
    \label{pr:CAPRA_Fenchel-Moreau_conjugate}
    For any function \( \fonctiondual : \RR^d \to \barRR \), 
    the $\CouplingCapra'$-Fenchel-Moreau conjugate is given by 
    \begin{equation}
      \SFMr{\fonctiondual}{\CouplingCapra}=
      \LFMr{ \fonctiondual }\circ\normalized
      \eqfinp 
      \label{eq:CAPRA'_Fenchel-Moreau_conjugate}
    \end{equation}
    For any function \( \fonctionprimal : \RR^d \to \barRR \), 
    the $\CouplingCapra$-Fenchel-Moreau conjugate is given by 
    \begin{equation}
      \SFM{\fonctionprimal}{\CouplingCapra}=
      \LFM{ \bp{\InfimalPostComposition{\normalized}{\fonctionprimal}} }
      \eqfinv 
      \label{eq:CAPRA_Fenchel-Moreau_conjugate}
    \end{equation}
    where the \conditionalinfimum\
    \( \ConditionalInfimum{\normalized}{\fonctionprimal} \),
 defined in \cite[Definition~2.4]{Chancelier-DeLara:2021_ECAPRA_JCA},
    has the expression
    \begin{equation}
      \ConditionalInfimum{\normalized}{\fonctionprimal}\np{\primal}
      =
      \inf\defset{\fonctionprimal\np{\primal'}}{
        \normalized\np{\primal'}=\primal}
      =
      \begin{cases}
        \inf_{\lambda > 0} \fonctionprimal\np{\lambda\primal}
        & \text{if } \primal \in \TripleNormSphere  \cup \{0\} 
        \eqfinv 
        \\
        +\infty  
        & \text{if } \primal \not\in \TripleNormSphere  \cup \{0\} 
        \eqfinv 
      \end{cases}
      \label{eq:CAPRA_ConditionalInfimum}
    \end{equation}
    and the $\CouplingCapra$-Fenchel-Moreau biconjugate 
    is given by
    \begin{equation}
      \SFMbi{\fonctionprimal}{\CouplingCapra}
      = 
      \LFMr{ \bp{ \SFM{\fonctionprimal}{\CouplingCapra} } }
      \circ \normalized
      =
      \LFMbi{ \bp{\ConditionalInfimum{\normalized}{\fonctionprimal}} }
      \circ \normalized
      \eqfinp
      \label{eq:CAPRA_Fenchel-Moreau_biconjugate}
    \end{equation}
  \end{proposition}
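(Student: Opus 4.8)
The plan is to read the statement off from the general calculus of one-sided linear couplings. As already observed, the coupling~\eqref{eq:coupling_CAPRA} rewrites as $\CouplingCapra\np{\primal,\dual}=\proscal{\normalized\np{\primal}}{\dual}$ with the normalization mapping~$\normalized$ of~\eqref{eq:normalization_mapping}; that is, $\CouplingCapra$ is the Fenchel coupling precomposed on the primal variable with $\normalized$. Hence \cite[Proposition~2.5]{Chancelier-DeLara:2021_ECAPRA_JCA} applies with the choice $\Theta=\normalized$, and it delivers at once formulas~\eqref{eq:CAPRA'_Fenchel-Moreau_conjugate}, \eqref{eq:CAPRA_Fenchel-Moreau_conjugate} and~\eqref{eq:CAPRA_Fenchel-Moreau_biconjugate}. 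The only thing left to make explicit is therefore the \conditionalinfimum\ $\ConditionalInfimum{\normalized}{\fonctionprimal}$ appearing there, namely formula~\eqref{eq:CAPRA_ConditionalInfimum}.

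To obtain~\eqref{eq:CAPRA_ConditionalInfimum} I would describe the fibers of $\normalized$. Since $\normalized\np{0}=0$ while $\normalized\np{\primal'}=\primal'/\TripleNorm{\primal'}\in\TripleNormSphere$ for $\primal'\neq 0$, the equation $\normalized\np{\primal'}=\primal$ has no solution when $\primal\notin\TripleNormSphere\cup\{0\}$, the unique solution $\primal'=0$ when $\primal=0$, and exactly the solutions $\primal'\in\bset{\lambda\primal}{\lambda>0}$ when $\primal\in\TripleNormSphere$ (indeed $\normalized\np{\primal'}=\primal$ forces $\primal'=\TripleNorm{\primal'}\,\primal$ with $\TripleNorm{\primal'}>0$, and conversely $\normalized\np{\lambda\primal}=\primal$ for $\lambda>0$ since $\TripleNorm{\primal}=1$). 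Substituting these fibers into the defining expression $\ConditionalInfimum{\normalized}{\fonctionprimal}\np{\primal}=\inf\defset{\fonctionprimal\np{\primal'}}{\normalized\np{\primal'}=\primal}$, together with the convention $\inf\emptyset=+\infty$, yields~\eqref{eq:CAPRA_ConditionalInfimum}; the cases $\primal\in\TripleNormSphere$ and $\primal=0$ are written uniformly as $\inf_{\lambda>0}\fonctionprimal\np{\lambda\primal}$, which for $\primal=0$ collapses to $\fonctionprimal\np{0}$.

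For completeness I would also record the short direct argument, bypassing the reference. Formula~\eqref{eq:CAPRA'_Fenchel-Moreau_conjugate} is immediate: unfolding the definition of the $\CouplingCapra'$-conjugate gives $\SFMr{\fonctiondual}{\CouplingCapra}\np{\primal}=\sup_{\dual\in\RR^d}\bp{\proscal{\normalized\np{\primal}}{\dual}\UppPlus\np{-\fonctiondual\np{\dual}}}=\LFMr{\fonctiondual}\np{\normalized\np{\primal}}$. For~\eqref{eq:CAPRA_Fenchel-Moreau_conjugate} I would split the supremum defining $\SFM{\fonctionprimal}{\CouplingCapra}\np{\dual}$ into the contribution of $\primal=0$ (equal to $-\fonctionprimal\np{0}$, since $\CouplingCapra\np{0,\dual}=0$) and that of $\primal\neq0$, group the latter according to the value $\primal'=\normalized\np{\primal}\in\TripleNormSphere$, and use that $\CouplingCapra$ is real-valued to pull the supremum over each fiber inside, turning the infimum of $\fonctionprimal$ over that fiber into $\ConditionalInfimum{\normalized}{\fonctionprimal}\np{\primal'}$; the whole supremum then reassembles to $\sup_{\primal'\in\RR^d}\bp{\proscal{\primal'}{\dual}\UppPlus\np{-\ConditionalInfimum{\normalized}{\fonctionprimal}\np{\primal'}}}=\LFM{\bp{\InfimalPostComposition{\normalized}{\fonctionprimal}}}\np{\dual}$. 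Finally~\eqref{eq:CAPRA_Fenchel-Moreau_biconjugate} follows by composing the two previous formulas, $\SFMbi{\fonctionprimal}{\CouplingCapra}=\SFMr{\bp{\SFM{\fonctionprimal}{\CouplingCapra}}}{\CouplingCapra}=\LFMr{\bp{\SFM{\fonctionprimal}{\CouplingCapra}}}\circ\normalized=\LFMbi{\bp{\ConditionalInfimum{\normalized}{\fonctionprimal}}}\circ\normalized$. I do not expect any genuine obstacle here: the content is the fiber computation of $\normalized$, and the only points requiring attention are the $\inf\emptyset=+\infty$ convention, the separate value at $\primal=0$, and the interchange of the supremum over fibers with the Moreau addition in the conjugate — harmless precisely because $\CouplingCapra$ never takes infinite values.
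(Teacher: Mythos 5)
Your proposal is correct and takes essentially the same route as the paper, which gives no standalone proof but simply invokes \cite[Proposition~2.5]{Chancelier-DeLara:2021_ECAPRA_JCA} specialized to the normalization mapping~$\normalized$ of~\eqref{eq:normalization_mapping}; your fiber description of $\normalized$ correctly supplies the explicit form~\eqref{eq:CAPRA_ConditionalInfimum}. The extra self-contained argument you record (splitting the supremum over the fibers of~$\normalized$, legitimate because the coupling is real-valued so the Moreau additions cause no ambiguity) is sound but goes beyond what the paper writes.
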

\end{subequations}
The $\CouplingCapra$-Fenchel-Moreau conjugate 
\( \SFM{\fonctionprimal}{\CouplingCapra} \)
is a closed convex function 
(see Appendix~\ref{Appendix}). 

\subsubsubsection{\Capra-convex functions} 

We recall that so-called \emph{$\CouplingCapra$-convex functions} 
are all functions of the form 
$\SFM{ \fonctiondual }{\CouplingCapra'}$, 
for any \( \fonctiondual : \RR^d \to \barRR \),
or, equivalently,
all functions of the form $\SFMbi{\fonctionprimal}{\CouplingCapra}$, 
for any \( \fonctionprimal : \RR^d \to \barRR \),
or, equivalently,
all functions that are equal to their $\CouplingCapra$-biconjugate
(\( \SFMbi{\fonctionprimal}{\CouplingCapra}=\fonctionprimal \))
\cite{Singer:1997,Rubinov:2000,Martinez-Legaz:2005}.
We recall that a function is closed convex on~$\RR^d$ if and only if it 
is either a proper convex lower semi continuous (lsc) function  
or one of the two constant functions~$-\infty$ or $+\infty$
(see Appendix~\ref{Appendix}). 
The following Proposition simply is
\cite[Proposition~2.6]{Chancelier-DeLara:2021_ECAPRA_JCA}
with the normalization mapping~\(\normalized\) in~\eqref{eq:normalization_mapping}.

\begin{proposition}
  \label{pr:CAPRA_convex_functions}  
  A function is $\CouplingCapra$-convex 
  if and only if it is the composition of 
  a closed convex function on~$\RR^d$
  with the normalization mapping~\eqref{eq:normalization_mapping}.
  More precisely, for any function \( \fonctionuncertain : \RR^d \to \barRR \),
  we have the equivalences
  \begin{subequations}
    \begin{align*}
     \fonctionuncertain \textrm{ is }  \CouplingCapra\textrm{-convex }
                                      & \Leftrightarrow
                                         \fonctionuncertain =
                                         \SFMbi{\fonctionuncertain}{\CouplingCapra}
      \\
                                       &\Leftrightarrow 
                                         \fonctionuncertain = \LFMr{ \bp{ \SFM{\fonctionuncertain}{\CouplingCapra} } } 
                                         \circ \, \normalized
                                         \textrm{ (where } \LFMr{ \bp{ \SFM{\fonctionuncertain}{\CouplingCapra} } } 
                                         \textrm{ is a closed convex function) }
      \\
                                       &\Leftrightarrow \textrm{there exists a
                                         closed convex function }
                                         \fonctionprimal: \RR^d  \to \barRR 
                                         \textrm{ such that }
                                         \fonctionuncertain = \fonctionprimal \circ \normalized
                                         \eqfinp
    \end{align*}
  \end{subequations}
\end{proposition}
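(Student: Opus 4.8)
The plan is to derive the three stated equivalences by combining the generic characterization of $\CouplingCapra$-convex functions (recalled immediately before the statement) with the explicit conjugacy formulas of Proposition~\ref{pr:CAPRA_Fenchel-Moreau_conjugate}; this is precisely the specialization of \cite[Proposition~2.6]{Chancelier-DeLara:2021_ECAPRA_JCA} to the one-sided linear coupling attached to the normalization mapping~$\normalized$ of~\eqref{eq:normalization_mapping}. The equivalence between ``$\fonctionuncertain$ is $\CouplingCapra$-convex'' and $\fonctionuncertain = \SFMbi{\fonctionuncertain}{\CouplingCapra}$ comes for free: it is the abstract fact that $\Phi$-convex functions are exactly the fixed points of $\Phi$-biconjugation, valid for any coupling, which we have just recalled.

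Next I would turn the biconjugate into the announced composition. By formula~\eqref{eq:CAPRA_Fenchel-Moreau_biconjugate} we have $\SFMbi{\fonctionuncertain}{\CouplingCapra} = \LFMr{\bp{\SFM{\fonctionuncertain}{\CouplingCapra}}} \circ \normalized$, and by the remark following Proposition~\ref{pr:CAPRA_Fenchel-Moreau_conjugate} the function $\SFM{\fonctionuncertain}{\CouplingCapra}$ is closed convex, so its Fenchel conjugate $\LFMr{\bp{\SFM{\fonctionuncertain}{\CouplingCapra}}}$ is closed convex as well. Hence $\fonctionuncertain = \SFMbi{\fonctionuncertain}{\CouplingCapra}$ is literally the second displayed condition, parenthetical included. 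This settles the first two equivalences.

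For the third equivalence, one direction is immediate: if $\fonctionuncertain$ equals $\LFMr{\bp{\SFM{\fonctionuncertain}{\CouplingCapra}}} \circ \normalized$, then $\fonctionuncertain = \fonctionprimal \circ \normalized$ with $\fonctionprimal := \LFMr{\bp{\SFM{\fonctionuncertain}{\CouplingCapra}}}$ closed convex. Conversely, assume $\fonctionuncertain = \fonctionprimal \circ \normalized$ for some closed convex $\fonctionprimal : \RR^d \to \barRR$. Then by the Fenchel-Moreau theorem for closed convex functions (Appendix~\ref{Appendix}) one has $\fonctionprimal = \LFMbi{\fonctionprimal} = \LFMr{\bp{\LFMr{\fonctionprimal}}}$, so, setting $\fonctiondual := \LFMr{\fonctionprimal}$ and applying the conjugate formula~\eqref{eq:CAPRA'_Fenchel-Moreau_conjugate}, we obtain $\fonctionuncertain = \LFMr{\fonctiondual} \circ \normalized = \SFMr{\fonctiondual}{\CouplingCapra}$, which exhibits $\fonctionuncertain$ as a $\CouplingCapra'$-Fenchel-Moreau conjugate, hence as a $\CouplingCapra$-convex function by definition. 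This closes the cycle of implications, and the verbal statement of the proposition is just a paraphrase of the third equivalence.

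The point needing the most care is the handling of the two constant functions $-\infty$ and $+\infty$: they do count as closed convex functions, and the Fenchel-Moreau identity $\LFMbi{\fonctionprimal} = \fonctionprimal$ still holds for them, but only because the conjugates are computed with Moreau's lower~($\LowPlus$) and upper~($\UppPlus$) additions rather than the ordinary one; the same remark underlies the fact that $\bigl\{\LFMr{\fonctiondual} : \fonctiondual : \RR^d \to \barRR\bigr\}$ is exactly the set of closed convex functions on~$\RR^d$, which is what lets the quantifier ``there exists~$\fonctiondual$'' implicit in~\eqref{eq:CAPRA'_Fenchel-Moreau_conjugate} be rephrased as ``there exists a closed convex function~$\fonctionprimal$''. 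Everything else is bookkeeping.
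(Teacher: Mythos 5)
Your proof is correct and takes essentially the route the paper intends: it specializes the recalled abstract characterization of $\CouplingCapra$-convex functions and the conjugacy formulas of Proposition~\ref{pr:CAPRA_Fenchel-Moreau_conjugate} to the coupling $\CouplingCapra$, which is exactly the content the paper delegates to the cited Proposition~2.6 of the companion work, so your argument supplies the details the paper omits. The only blemish is notational: in the converse of the third equivalence you should take $\fonctiondual := \LFM{\fonctionprimal}$ (the primal-to-dual Fenchel conjugate) so that $\fonctionprimal = \LFMr{\fonctiondual}$, rather than $\fonctiondual := \LFMr{\fonctionprimal}$ --- a distinction that is immaterial here since $\PRIMAL=\DUAL=\RR^d$ with the standard pairing.
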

For instance, letting $\Norm{\cdot}$ be any norm on~$\RR^d$ (not necessarily the Euclidian norm),
the function \( \Norm{\cdot} / \TripleNorm{\cdot} \)
(with the value~0 at~0) is $\CouplingCapra$-convex.

\subsubsubsection{\Capra-subdifferential}

Following the definition of the subdifferential
of a function with respect to a duality 
in \cite{Akian-Gaubert-Kolokoltsov:2002},
the \emph{\Capra-subdifferential} of 
the function \( \fonctionprimal : \RR^d \to \barRR \) 
at~\( \primal \in  \RR^d \) has the following expressions
\begin{subequations}
  \begin{align}
    \partial_{\CouplingCapra}\fonctionprimal\np{\primal} 
    &=
      \defset{ \dual \in \RR^d }{ %
      \SFM{\fonctionprimal}{\CouplingCapra}\np{\dual}=
      \CouplingCapra\np{\primal, \dual} 
      \LowPlus \bp{ -\fonctionprimal\np{\primal} } }
      \label{eq:Capra-subdifferential_b}
    \\
    &=
      \defset{ \dual \in \RR^d }{ %
      \LFM{ \bp{\ConditionalInfimum{\normalized}{\fonctionprimal}} }\np{\dual}=
      \proscal{\normalized\np{\primal}}{\dual} 
      \LowPlus \bp{ -\fonctionprimal\np{\primal} } }
      \eqfinv
      \label{eq:Capra-subdifferential_c}
      \intertext{so that, thanks to the definition~\eqref{eq:normalization_mapping}
      of the normalization mapping~$\normalized$, we deduce that}
      \partial_{\CouplingCapra}\fonctionprimal\np{0} 
    &=
      \defset{ \dual \in \RR^d }{ %
      \LFM{ \bp{\ConditionalInfimum{\normalized}{\fonctionprimal}} }\np{\dual}=
      -\fonctionprimal\np{0} }
      \label{eq:Capra-subdifferential_zero}
    \\
    \partial_{\CouplingCapra}\fonctionprimal\np{\primal} 
    &=
      \defset{ \dual \in \RR^d }{ %
      \LFM{ \bp{\ConditionalInfimum{\normalized}{\fonctionprimal}} }\np{\dual}=
      \frac{ \proscal{\primal}{\dual} }{ \TripleNorm{\primal} } 
      \LowPlus \bp{ -\fonctionprimal\np{\primal} } }
      \eqsepv \forall \primal \in \RR^d\backslash\{0\} 
      \eqfinp 
      \label{eq:Capra-subdifferential_neq_zero}
  \end{align}
  \label{eq:Capra-subdifferential}
\end{subequations}

Now, we turn to analyze the \lzeropseudonorm\ by means of the \Capra\
conjugacy.

\subsection{\Capra-conjugates related to the \lzeropseudonorm}
\label{CAPRA_conjugates_related_to_the_pseudo_norm}

With the Fenchel conjugacy, we calculate that 
\( \LFM{ \delta_{ \LevelSet{\lzero}{k} } }= \delta_{\{0\}} \) 
for all $k\in\ic{1,d} $ --- 
where \( \delta_{ \LevelSet{\lzero}{k} } \) is the characteristic function
of the level sets~\eqref{eq:pseudonormlzero_level_set} --- 
and that 
\( \LFM{ \lzero }= \delta_{\{0\}} \).
Hence, the Fenchel conjugacy is not suitable
to handle the \lzeropseudonorm.

By contrast, we will now show that functions of 
the \lzeropseudonorm\ in~\eqref{eq:pseudo_norm_l0}
--- including the \lzeropseudonorm\ itself and 
the characteristic functions \( \delta_{ \LevelSet{\lzero}{k} } \) 
of its level sets~\eqref{eq:pseudonormlzero_level_set} ---
are related to 
the sequence of dual coordinate-$k$ norms in Definition~\ref{de:coordinate_norm}
by the following \Capra-conjugacy formulas.

\begin{proposition}
  \label{pr:pseudonormlzero_conjugate_varphi}
  Let $\TripleNorm{\cdot}$ be a norm on~$\RR^d$,
  with associated sequence 
  \( \sequence{\CoordinateNormDual{\TripleNorm{\cdot}}{\LocalIndex}}{\LocalIndex\in\ic{1,d}} \)
  of dual coordinate-$k$ norms in Definition~\ref{de:coordinate_norm},
  and 
  associated \Capra-coupling $\CouplingCapra$ in~\eqref{eq:coupling_CAPRA}. 

  For any function \( \varphi : \ic{0,d} \to \barRR \), we have
(with the convention \( \CoordinateNormDual{\TripleNorm{\cdot}}{0} = 0 \))
  \begin{equation}
    \SFM{ \np{ \varphi \circ  \lzero } }{\CouplingCapra} 
    =
    \sup_{\LocalIndex\in\ic{0,d}} \Bc{ \CoordinateNormDual{\TripleNorm{\cdot}}{\LocalIndex} -\varphi\np{\LocalIndex} }  
    \eqfinp
    \label{eq:conjugate_l0norm_varphi}
  \end{equation}
\end{proposition}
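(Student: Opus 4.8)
The strategy is to compute $\SFM{(\varphi\circ\lzero)}{\CouplingCapra}$ directly from the conjugate formula~\eqref{eq:CAPRA_Fenchel-Moreau_conjugate}, namely $\SFM{\fonctionprimal}{\CouplingCapra}=\LFM{(\ConditionalInfimum{\normalized}{\fonctionprimal})}$, using the explicit form of the \conditionalinfimum\ in~\eqref{eq:CAPRA_ConditionalInfimum}. First I would observe that, since $\lzero$ is $0$-homogeneous by~\eqref{eq:lzeropseudonorm_is_0-homogeneous}, the function $\varphi\circ\lzero$ is constant along rays $\{\lambda\primal : \lambda>0\}$, hence $\inf_{\lambda>0}(\varphi\circ\lzero)(\lambda\primal)=\varphi(\lzero(\primal))$; therefore the \conditionalinfimum\ simplifies to $\ConditionalInfimum{\normalized}{(\varphi\circ\lzero)}(\primal)=\varphi(\lzero(\primal))$ when $\primal\in\TripleNormSphere$, the value at $0$ being $\varphi(0)$ (since $\lzero(0)=0$), and $+\infty$ otherwise.

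Next I would take the Fenchel conjugate of this function. Writing it out,
\begin{equation*}
  \SFM{(\varphi\circ\lzero)}{\CouplingCapra}(\dual)
  = \sup_{\primal\in\TripleNormSphere\cup\{0\}} \Bp{ \proscal{\primal}{\dual} \LowPlus \bp{-\varphi(\lzero(\primal))} }.
\end{equation*}
I would then split the supremum according to the value $\ell=\lzero(\primal)\in\ic{0,d}$. For $\ell=0$ the only contributing point is $\primal=0$, giving the term $-\varphi(0)$, which matches the $\LocalIndex=0$ term since $\CoordinateNormDual{\TripleNorm{\cdot}}{0}=0$ by convention. For each $\ell\in\ic{1,d}$, the inner supremum over $\{\primal\in\TripleNormSphere : \lzero(\primal)=\ell\}$ of $\proscal{\primal}{\dual}$ is exactly $\sigma_{\LevelCurve{\lzero}{\ell}\cap\TripleNormSphere}(\dual)$, which by Proposition on dual coordinate-$k$ norms, equation~\eqref{eq:dual_coordinate_norm}, equals $\CoordinateNormDual{\TripleNorm{\dual}}{\ell}$. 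Subtracting the constant $\varphi(\ell)$ and taking the supremum over $\ell\in\ic{0,d}$ yields the claimed formula~\eqref{eq:conjugate_l0norm_varphi}.

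The one point requiring care is the bookkeeping of the Moreau lower addition $\LowPlus$ and the possibly infinite values of $\varphi$: I would note that if $\varphi(\ell)=+\infty$ for some $\ell$, then that value of $\ell$ contributes $-\infty$ to the outer supremum and is harmlessly absorbed; if $\varphi(\ell)=-\infty$, the term $\CoordinateNormDual{\TripleNorm{\dual}}{\ell}-\varphi(\ell)$ is $+\infty$ (using $\UppPlus$/$\LowPlus$ conventions consistently as in the paper), and one checks the left-hand side is also $+\infty$ since the inner supremum picks up $\proscal{\primal}{\dual}\LowPlus(+\infty)$ over a nonempty set. In short, the identity is really just the pointwise statement that a supremum over $\RR^d\setminus\{0\}$ of a ray-constant function reduces to a supremum over the sphere, then a decomposition of the sphere by the level curves of $\lzero$, combined with the already-established support-function identity~\eqref{eq:dual_coordinate_norm}.

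**Expected main obstacle.** There is no serious analytic obstacle; the real work was done in establishing~\eqref{eq:dual_coordinate_norm} (in particular the equality of the support functions of $\LevelSet{\lzero}{k}\cap\TripleNormSphere$ and $\LevelCurve{\lzero}{k}\cap\TripleNormSphere$). The only thing to be vigilant about is that the supremum defining $\CoordinateNormDual{\TripleNorm{\dual}}{\ell}$ is over $\LevelCurve{\lzero}{\ell}\cap\TripleNormSphere$ (equality of $\lzero$ to $\ell$), not $\LevelSet{\lzero}{\ell}\cap\TripleNormSphere$, so that the decomposition of the unit sphere into level curves $\TripleNormSphere=\bigsqcup_{\ell\in\ic{1,d}}(\LevelCurve{\lzero}{\ell}\cap\TripleNormSphere)$ is a genuine partition and no point is double-counted or omitted — and separately handling $\primal=0$ for the $\ell=0$ term.
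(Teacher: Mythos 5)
Your proposal is correct and follows essentially the same route as the paper: both decompose by the level curves of~$\lzero$ intersected with the unit sphere and then invoke the identity \( \sigma_{ \LevelCurve{\lzero}{\LocalIndex} \cap \TripleNormSphere } = \CoordinateNormDual{\TripleNorm{\cdot}}{\LocalIndex} \) from~\eqref{eq:dual_coordinate_norm}, with the $\LocalIndex=0$ term handled by the convention \( \CoordinateNormDual{\TripleNorm{\cdot}}{0}=0 \). The only (cosmetic) difference is that you compute the \conditionalinfimum\ pointwise and split the resulting supremum directly, whereas the paper writes \( \varphi\circ\lzero = \inf_{\LocalIndex} \bc{ \delta_{\LevelCurve{\lzero}{\LocalIndex}} \UppPlus \varphi\np{\LocalIndex} } \) and uses the fact that conjugacies turn infima into suprema.
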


\begin{proof} 
  We prove~\eqref{eq:conjugate_l0norm_varphi}:
  \begin{align*}
    \SFM{\np{ \varphi \circ \lzero } }{\CouplingCapra}
    &= 
      \SFM{\Bp{ \inf_{\LocalIndex\in\ic{0,d}} \bc{ \delta_{ \LevelCurve{\lzero}{\LocalIndex} } \UppPlus
      \varphi\np{\LocalIndex} } } }{\CouplingCapra}
       \intertext{because \( \varphi \circ \lzero =
      \inf_{\LocalIndex\in\ic{0,d}} \bc{ \delta_{ \LevelCurve{\lzero}{\LocalIndex} } \UppPlus
      \varphi\np{\LocalIndex} } \)
      since \( \varphi \circ \lzero \) takes the values
      \( \varphi\np{\LocalIndex} \) on the level curves
      \( \LevelCurve{\lzero}{\LocalIndex} \) of~\( \lzero \)
      in~\eqref{eq:pseudonormlzero_level_curve}}
    &= 
      \sup_{\LocalIndex\in\ic{0,d}} 
      \SFM{ \bc{ \delta_{ \LevelCurve{\lzero}{\LocalIndex} } \UppPlus \varphi\np{\LocalIndex} } }{\CouplingCapra}
      \intertext{as conjugacies, being dualities, turn infima into suprema}
    &= 
      \sup_{\LocalIndex\in\ic{0,d}} \bc{ 
      \SFM{ \delta_{ \LevelCurve{\lzero}{\LocalIndex} }}{\CouplingCapra}
      \LowPlus \np{-\varphi\np{\LocalIndex}} }  
      \tag{by property of conjugacies}
    \\
    &= 
      \sup_{\LocalIndex\in\ic{0,d}} \bc{ 
      \sigma_{ \normalized\np{\LevelCurve{\lzero}{\LocalIndex} } } 
      \LowPlus \np{-\varphi\np{\LocalIndex}} }  
      \tag{as \( \SFM{ \delta_{ \LevelCurve{\lzero}{\LocalIndex} }}{\CouplingCapra}
      = \sigma_{ \normalized\np{\LevelCurve{\lzero}{\LocalIndex} } } \)
by \cite[Proposition~2.5]{Chancelier-DeLara:2021_ECAPRA_JCA} }
    \\
    &= 
      \sup_{\LocalIndex\in\ic{0,d}} \Ba{ 
      \sup \ba{ 0, \sigma_{ \LevelCurve{\lzero}{\LocalIndex} \cap \TripleNormSphere } }
      \LowPlus \np{-\varphi\np{\LocalIndex}} }
      \intertext{as \( \normalized\np{\LevelCurve{\lzero}{\LocalIndex} }
      = \{0\} \cup \bp{ \LevelCurve{\lzero}{\LocalIndex} \cap \TripleNormSphere } \) 
      by~\eqref{eq:normalization_mapping}, and as the support function turns a union of sets into a supremum}  
      \phantom{\SFM{\np{ \varphi \circ  \lzero } }{\CouplingCapra}}
    &= 
      \sup_{\LocalIndex\in\ic{0,d}} \ba{ 
      \sigma_{ \LevelCurve{\lzero}{\LocalIndex} \cap \TripleNormSphere } 
      \LowPlus \np{-\varphi\np{\LocalIndex}} }
      \tag{as \( \sigma_{ \LevelCurve{\lzero}{\LocalIndex} \cap
      \TripleNormSphere } \geq 0 \) 
      since \( \LevelCurve{\lzero}{\LocalIndex} \cap \TripleNormSphere =
      -\bp{ \LevelCurve{\lzero}{\LocalIndex} \cap \TripleNormSphere } \) }
    \\
    &= 
      \sup \Ba{ -\varphi\np{0}, 
      \sup_{\LocalIndex\in\ic{1,d}} 
      \Bc{ \CoordinateNormDual{\TripleNorm{\dual}}{\LocalIndex} -\varphi\np{\LocalIndex} } }
      \tag{as \( \sigma_{ \LevelCurve{\lzero}{\LocalIndex} \cap \TripleNormSphere } =
      \CoordinateNormDual{\TripleNorm{\cdot}}{\LocalIndex} \)
      by~\eqref{eq:dual_coordinate_norm} } 
    \\
    &= 
      \sup_{\LocalIndex\in\ic{0,d}} 
      \Bc{ \CoordinateNormDual{\TripleNorm{\dual}}{\LocalIndex} -\varphi\np{\LocalIndex} } 
      \tag{using the convention that \( \CoordinateNormDual{\TripleNorm{\cdot}}{0} = 0 \) }
      \eqfinp
  \end{align*}

  This ends the proof.
\end{proof}

With \( \varphi \) the identity function on~\( \ic{0,d} \),
we find the \Capra-conjugate of the \lzeropseudonorm.
With the functions
\( \varphi=\delta_{\ic{0,k}} \) (for any $k \in \ic{0,d} $),
we find the \Capra-conjugates of the
characteristic functions \( \delta_{ \LevelSet{\lzero}{k} } \)
of its level sets~\eqref{eq:pseudonormlzero_level_set}.
The corresponding expressions are given in Table~\ref{tab:results_conjugacy}.

\subsection{\Capra-biconjugates related to the \lzeropseudonorm}
\label{CAPRA_biconjugates_related_to_the_pseudo_norm}

With the Fenchel conjugacy, we calculate that 
\( \LFMbi{ \delta_{ \LevelSet{\lzero}{k} } }= 0 \),
for all $k\in\ic{1,d} $, and that 
\( \LFMbi{ \lzero }= 0 \).
Hence, the Fenchel conjugacy is not suitable
to handle the \lzeropseudonorm.

By contrast, we will now show that functions of 
the \lzeropseudonorm\ in~\eqref{eq:pseudo_norm_l0}
--- including the \lzeropseudonorm\ itself and 
the characteristic functions \( \delta_{ \LevelSet{\lzero}{k} } \) 
of its level sets~\eqref{eq:pseudonormlzero_level_set} ---
are related to 
the sequences of coordinate-$k$ norms and dual 
coordinate-$k$ norms in Definition~\ref{de:coordinate_norm}
by the following \Capra-biconjugacy formulas.

\begin{proposition}
  Let $\TripleNorm{\cdot}$ be a norm on~$\RR^d$,
  with associated sequence
  \( \sequence{\CoordinateNorm{\TripleNorm{\cdot}}{\LocalIndex}}{\LocalIndex\in\ic{1,d}} \)
  of coordinate-$k$ norms and sequence
  \( \sequence{\CoordinateNorm{\TripleNormDual{\cdot}}{\LocalIndex}}{\LocalIndex\in\ic{1,d}} \)
  of dual coordinate-$k$ norms, as 
  in Definition~\ref{de:coordinate_norm},
  and with
  associated \Capra\ coupling $\CouplingCapra$ in~\eqref{eq:coupling_CAPRA}. 

  \begin{enumerate}
  \item 
    For any function \( \varphi : \ic{0,d} \to \barRR \), we have
    \begin{subequations}
      \begin{align}
        \SFMbi{ \np{ \varphi \circ \lzero} }{\CouplingCapra}\np{\primal}
        &=
          \LFMr{ \bp{ \SFM{\np{ \varphi \circ \lzero}}{\CouplingCapra} } }
          \np{ \frac{\primal}{\TripleNorm{\primal}} }
          \eqsepv \forall \primal \in \RR^d\backslash\{0\}
          \label{eq:Biconjugate_ofvarphi_of_lzero}
          \eqfinv 
          \intertext{where the closed convex function 
          \( \LFMr{ \bp{ \SFM{\np{ \varphi \circ \lzero}}{\CouplingCapra} } } \) 
          has the following expression as a Fenchel conjugate}
          \LFMr{ \bp{ \SFM{\np{ \varphi \circ \lzero}}{\CouplingCapra} } }
        &=
          \LFMr{ \Bp{ 
          \sup_{\LocalIndex\in\ic{0,d}} \bc{
          \CoordinateNormDual{\TripleNorm{\cdot}}{\LocalIndex} -\varphi\np{\LocalIndex} } } }
          \eqfinv 
          \intertext{and also
          has the following four expressions as a Fenchel biconjugate}
        &=
          \LFMbi{ \Bp{ \inf_{\LocalIndex\in\ic{0,d}} \bc{ 
          \delta_{  \CoordinateNorm{\TripleNormBall}{\LocalIndex} } \UppPlus \varphi\np{\LocalIndex} } } }
          \eqfinv 
          \label{eq:Biconjugate_of_min_balls_ind}
          \intertext{hence the function~\( \LFMr{ \bp{ \SFM{\np{ \varphi \circ \lzero}}{\CouplingCapra} } } \)
          is the largest closed convex function
          below the integer valued function 
          \( \inf_{\LocalIndex\in\ic{0,d}} \bc{ 
          \delta_{  \CoordinateNorm{\TripleNormBall}{\LocalIndex} } \UppPlus \varphi\np{\LocalIndex} } \),
          such that 
 \( \primal \in \CoordinateNorm{\TripleNormBall}{\LocalIndex} 
          \backslash \CoordinateNorm{\TripleNormBall}{\LocalIndex-1} \mapsto \varphi\np{\LocalIndex} \) 
          for $l\in\ic{1,d}$,
          and $\primal \in \CoordinateNorm{\TripleNormBall}{0} = \{0\} \mapsto \varphi\np{0}$, the function
          being infinite outside~\( \CoordinateNorm{\TripleNormBall}{d}=
          \TripleNormBall \), that is, with the convention that \( \CoordinateNorm{\TripleNormBall}{0}=\{0\} \)
          and that \( \inf \emptyset = +\infty \)}
        &= 
          \LFMbi{ \Bp{ \primal \mapsto \inf \bset{ \varphi\np{\LocalIndex} }%
          { \primal \in \CoordinateNorm{\TripleNormBall}{\LocalIndex}
          \eqsepv \LocalIndex \in \ic{0,d} } } }
          \eqfinv 
          \label{eq:Biconjugate_of_min_balls_ind_bis}
          \\
        &=
          \LFMbi{ \Bp{ \inf_{\LocalIndex\in\ic{0,d}} \bc{ 
          \delta_{ \CoordinateNorm{\TripleNormSphere}{\LocalIndex} } \UppPlus \varphi\np{\LocalIndex} } } }
          \eqfinv 
          \label{eq:Biconjugate_of_min_spheres_ind}
          \intertext{hence the function~\( \LFMr{ \bp{ \SFM{\np{ \varphi \circ \lzero}}{\CouplingCapra} } } \)
          is the largest closed convex function
          below the integer valued function 
          \( \inf_{\LocalIndex\in\ic{0,d}} \bc{ 
          \delta_{  \CoordinateNorm{\TripleNormSphere}{\LocalIndex} } \UppPlus
          \varphi\np{\LocalIndex} } \), that is,
with the convention that \( \CoordinateNorm{\TripleNormSphere}{0}=\{0\} \)
          and that \( \inf \emptyset = +\infty \)}
        &= 
          \LFMbi{ \Bp{\primal \mapsto \inf \bset{ \varphi\np{\LocalIndex} }%
          { \primal \in \CoordinateNorm{\TripleNormSphere}{\LocalIndex}
          \eqsepv \LocalIndex \in \ic{0,d} } } }
          \eqfinp 
          \label{eq:Biconjugate_of_min_spheres_ind_bis}
       \end{align}
    \item 
      For any function \( \varphi : \ic{0,d} \to \RR \), 
      that is, with finite values, the function 
      \( \LFMr{ \bp{ \SFM{\np{ \varphi \circ \lzero}}{\CouplingCapra} } } \) 
      is proper convex lsc and 
      has the following variational expression
(where \( \Delta_{d+1} \) denotes the simplex of~$\RR^{d+1}$)
      \begin{align}
        \LFMr{ \bp{ \SFM{\np{ \varphi \circ \lzero}}{\CouplingCapra} } }\np{\primal}
        &= 
          \min_{ \substack{%
          \np{\lambda_0,\lambda_1,\ldots,\lambda_d} \in \Delta_{d+1} 
        \\
        \primal \in \sum_{ \LocalIndex=1 }^{ d } \lambda_{\LocalIndex} \CoordinateNorm{\TripleNormBall}{\LocalIndex} 
        } } 
        \sum_{ \LocalIndex=0}^{ d } \lambda_{\LocalIndex}
        \varphi\np{\LocalIndex} 
        \eqsepv \forall  \primal \in \RR^d 
        \eqfinp
        \label{eq:biconjugate_with_balls}
      \end{align}
    \item
      \label{it:biconjugate_l0norm_varphi}
      For any function \( \varphi : \ic{0,d} \to \RR_+ \), 
      that is, with nonnegative finite values,
      and such that \( \varphi\np{0}=0 \), the function 
      \( \LFMr{ \bp{ \SFM{\np{ \varphi \circ \lzero}}{\CouplingCapra} } } \) 
      is proper convex lsc and 
      has the following two variational expressions\footnote{%
In~\eqref{eq:biconjugate_with_balls}, the sum starts from \( \LocalIndex=0 \),
        whereas in~\eqref{eq:biconjugate_with_spheres}
        and in~\eqref{eq:pseudonormlzero_convex_minimum}, 
        the sum starts from \( \LocalIndex=1 \)}
      \begin{align}
        \LFMr{ \bp{ \SFM{\np{ \varphi \circ \lzero}}{\CouplingCapra} } }\np{\primal}
        &= 
          \min_{ \substack{%
          \np{\lambda_0,\lambda_1,\ldots,\lambda_d} \in \Delta_{d+1} 
        \\
        \primal \in \sum_{ \LocalIndex=1 }^{ d } \lambda_{\LocalIndex} \CoordinateNorm{\TripleNormSphere}{\LocalIndex} 
        } } 
        \sum_{ \LocalIndex=1 }^{ d } \lambda_{\LocalIndex} \varphi\np{\LocalIndex} 
        \eqsepv \forall \primal \in \RR^d
        \eqfinv
        \label{eq:biconjugate_with_spheres}
        \\
        &= 
          \min_{ \substack{%
          z^{(1)} \in \RR^d, \ldots, z^{(d)} \in \RR^d 
        \\
        \sum_{ \LocalIndex=1 }^{ d } \CoordinateNorm{\TripleNorm{z^{(\LocalIndex)}}}{\LocalIndex} \leq 1
        \\
        \sum_{ \LocalIndex=1 }^{ d } z^{(\LocalIndex)} = \primal } }
        \sum_{ \LocalIndex=1 }^{ d } \varphi\np{\LocalIndex}
        \CoordinateNorm{\TripleNorm{z^{(\LocalIndex)}}}{\LocalIndex} 
        \eqsepv \forall \primal \in \RR^d
        \eqfinv
        \label{eq:pseudonormlzero_convex_minimum}
      \end{align}
      %
    \end{subequations}
    and the function \( \SFMbi{ \np{ \varphi \circ \lzero} }{\CouplingCapra} \)
    has the following variational expression
    \begin{equation}
      \SFMbi{ \np{ \varphi \circ \lzero} }{\CouplingCapra}\np{\primal}
      =
      \frac{ 1 }{ \TripleNorm{\primal} } 
      \min_{ \substack{%
          z^{(1)} \in \RR^d, \ldots, z^{(d)} \in \RR^d 
          \\
          \sum_{ \LocalIndex=1 }^{ d } \CoordinateNorm{\TripleNorm{z^{(\LocalIndex)}}}{\LocalIndex} \leq \TripleNorm{\primal}
          \\
          \sum_{ \LocalIndex=1 }^{ d } z^{(\LocalIndex)} = \primal } }
      \sum_{ \LocalIndex=1 }^{ d }
      \CoordinateNorm{\TripleNorm{z^{(\LocalIndex)}}}{\LocalIndex} \varphi\np{\LocalIndex} 
      \eqsepv \forall \primal \in \RR^d\backslash\{0\} 
      \eqfinp
      \label{eq:biconjugate_l0norm_varphi}
    \end{equation}
  \end{enumerate}
  \label{pr:pseudonormlzero_biconjugate_varphi}
\end{proposition}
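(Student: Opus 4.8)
The plan is to obtain every identity from two ingredients: the general relation \( \SFMbi{\fonctionprimal}{\CouplingCapra}=\LFMr{\bp{\SFM{\fonctionprimal}{\CouplingCapra}}}\circ\normalized \) of Proposition~\ref{pr:CAPRA_Fenchel-Moreau_conjugate}, and the conjugate formula~\eqref{eq:conjugate_l0norm_varphi} of Proposition~\ref{pr:pseudonormlzero_conjugate_varphi}. Since \( \normalized\np{\primal}=\primal/\TripleNorm{\primal} \) for \( \primal\neq 0 \) by~\eqref{eq:normalization_mapping}, the first relation is exactly~\eqref{eq:Biconjugate_ofvarphi_of_lzero}; thus the whole proof reduces to identifying the closed convex function \( \LFMr{\bp{\SFM{\np{\varphi\circ\lzero}}{\CouplingCapra}}} \), and the bulk of the work is a sequence of Fenchel-conjugate manipulations on that function.

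First I would rewrite \( \LFMr{\bp{\SFM{\np{\varphi\circ\lzero}}{\CouplingCapra}}} \). By~\eqref{eq:conjugate_l0norm_varphi} it equals the Fenchel conjugate of \( \sup_{\LocalIndex\in\ic{0,d}}\bc{\CoordinateNormDual{\TripleNorm{\cdot}}{\LocalIndex}-\varphi\np{\LocalIndex}} \). Each \( \CoordinateNormDual{\TripleNorm{\cdot}}{\LocalIndex}-\varphi\np{\LocalIndex} \) is closed convex (a norm minus a constant, or, for \( \LocalIndex=0 \), a constant), hence is its own Fenchel biconjugate, and its conjugate is \( \delta_{\CoordinateNorm{\TripleNormBall}{\LocalIndex}}\UppPlus\varphi\np{\LocalIndex} \), because the conjugate of a norm is the indicator of the unit ball of its dual norm and the dual norm of \( \CoordinateNormDual{\TripleNorm{\cdot}}{\LocalIndex} \) is \( \CoordinateNorm{\TripleNorm{\cdot}}{\LocalIndex} \) by Definition~\ref{de:coordinate_norm}. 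Using that the supremum of a family of closed convex functions is the Fenchel biconjugate of the infimum of their conjugates, I obtain \( \LFMr{\bp{\SFM{\np{\varphi\circ\lzero}}{\CouplingCapra}}}=\LFMbi{\bp{\inf_{\LocalIndex\in\ic{0,d}}\bc{\delta_{\CoordinateNorm{\TripleNormBall}{\LocalIndex}}\UppPlus\varphi\np{\LocalIndex}}}} \), which is~\eqref{eq:Biconjugate_of_min_balls_ind}; rewriting the pointwise infimum as an infimum over set membership gives~\eqref{eq:Biconjugate_of_min_balls_ind_bis}. To replace balls by spheres in~\eqref{eq:Biconjugate_of_min_spheres_ind}--\eqref{eq:Biconjugate_of_min_spheres_ind_bis}, note that \( \CoordinateNorm{\TripleNormSphere}{\LocalIndex}\subset\CoordinateNorm{\TripleNormBall}{\LocalIndex} \) gives one inequality between the biconjugates; for the reverse I would show that the closed convex minorant \( \LFMbi{\bp{\inf_{\LocalIndex}\bc{\delta_{\CoordinateNorm{\TripleNormSphere}{\LocalIndex}}\UppPlus\varphi\np{\LocalIndex}}}} \) is \( \le\varphi\np{\LocalIndex} \) at every \( \primal\in\CoordinateNorm{\TripleNormBall}{\LocalIndex} \): writing \( \primal=\CoordinateNorm{\TripleNorm{\primal}}{\LocalIndex}\,y \) with \( y\in\CoordinateNorm{\TripleNormSphere}{\LocalIndex} \) and \( 0=\frac12 z+\frac12(-z) \) with \( z\in\CoordinateNorm{\TripleNormSphere}{\LocalIndex} \) (using the symmetry of norms and, for \( \LocalIndex=0 \), the convention \( \CoordinateNorm{\TripleNormSphere}{0}=\{0\} \)) exhibits \( \primal \) as a convex combination of points of \( \CoordinateNorm{\TripleNormSphere}{\LocalIndex} \), on each of which the function is \( \le\varphi\np{\LocalIndex} \).

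Next, for Items~2 and~3 I would turn these biconjugates into the stated variational minima. When \( \varphi \) has finite values, \( \sup_{\LocalIndex}\bc{\CoordinateNormDual{\TripleNorm{\cdot}}{\LocalIndex}-\varphi\np{\LocalIndex}} \) is finite-valued and dominates the coercive function \( \TripleNormDual{\cdot}-\varphi\np{d} \) (recall \( \CoordinateNormDual{\TripleNorm{\cdot}}{d}=\TripleNormDual{\cdot} \) by~\eqref{eq:dual_coordinate_norm-d_equality}), so its conjugate is proper convex lsc, which gives the properness claims. For the explicit formulas I would invoke that the convex hull of an infimum of finitely many functions \( \delta_{C_{\LocalIndex}}\UppPlus c_{\LocalIndex} \), with each \( C_{\LocalIndex} \) compact convex, has epigraph \( \bset{\bp{\sum_{\LocalIndex}\lambda_{\LocalIndex}y_{\LocalIndex},\,t}}{\lambda\in\Delta_{d+1},\, y_{\LocalIndex}\in C_{\LocalIndex},\, t\ge\sum_{\LocalIndex}\lambda_{\LocalIndex}c_{\LocalIndex}} \), which is closed (a continuous image of a compact set, then summed with \( [0,+\infty) \) in the last coordinate); hence this convex hull is already lsc, coincides with the Fenchel biconjugate, and the infimum defining it is a minimum. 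Taking \( C_{\LocalIndex}=\CoordinateNorm{\TripleNormBall}{\LocalIndex} \) (with \( \CoordinateNorm{\TripleNormBall}{0}=\{0\} \) contributing nothing to the Minkowski sum) yields~\eqref{eq:biconjugate_with_balls}; taking \( C_{\LocalIndex}=\CoordinateNorm{\TripleNormSphere}{\LocalIndex} \) together with \( \varphi\np{0}=0 \) (which removes the \( \LocalIndex=0 \) term) yields~\eqref{eq:biconjugate_with_spheres}. Then~\eqref{eq:pseudonormlzero_convex_minimum} is~\eqref{eq:biconjugate_with_spheres} after the change of variables \( z^{(\LocalIndex)}=\lambda_{\LocalIndex}y_{\LocalIndex} \) (so \( \CoordinateNorm{\TripleNorm{z^{(\LocalIndex)}}}{\LocalIndex}=\lambda_{\LocalIndex} \) when \( y_{\LocalIndex}\in\CoordinateNorm{\TripleNormSphere}{\LocalIndex} \), the case \( \lambda_{\LocalIndex}=0 \) being handled by \( z^{(\LocalIndex)}=0 \)), and finally~\eqref{eq:biconjugate_l0norm_varphi} follows by evaluating~\eqref{eq:Biconjugate_ofvarphi_of_lzero} through~\eqref{eq:pseudonormlzero_convex_minimum} at \( \primal/\TripleNorm{\primal} \) and rescaling \( z^{(\LocalIndex)}\mapsto z^{(\LocalIndex)}/\TripleNorm{\primal} \), using the \( 1 \)-homogeneity of the coordinate-\( k \) norms.

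The step I expect to be the main obstacle is the ball-to-sphere reduction: one must show, uniformly in \( \varphi \) --- and in particular at \( \primal=0 \), where the sphere-indexed infimum can be strictly larger than the ball-indexed one --- that passing to the closed convex envelope erases the discrepancy, which is exactly the symmetrization argument sketched above. The remaining points requiring care are the bookkeeping with the Moreau additions \( \UppPlus \) and \( \LowPlus \) when \( \varphi \) attains \( +\infty \) in Item~1, and the compactness/closedness argument that upgrades every ``\( \inf \)'' in the variational formulas to an attained ``\( \min \)''.
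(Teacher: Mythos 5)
Your overall architecture coincides with the paper's: everything is reduced, via \eqref{eq:CAPRA_Fenchel-Moreau_biconjugate}, to identifying the closed convex function \( \LFMr{ \bp{ \SFM{\np{ \varphi \circ \lzero}}{\CouplingCapra} } } \), starting from the conjugate formula~\eqref{eq:conjugate_l0norm_varphi}. Two of your routes genuinely differ from the paper's but are sound. For the sphere versions in Item~1, the paper just observes that \( \CoordinateNormDual{\TripleNorm{\cdot}}{\LocalIndex} = \sigma_{ \CoordinateNorm{\TripleNormSphere}{\LocalIndex} } = \LFM{ \delta_{ \CoordinateNorm{\TripleNormSphere}{\LocalIndex} } } \) and reruns the same chain of equalities, whereas you compare the two closed convex envelopes by an explicit symmetrization; this is more work for the same conclusion, and you will indeed have to watch the Moreau additions when \( \varphi \) takes infinite values. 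For~\eqref{eq:biconjugate_with_balls}, the paper works on the dual side with the formula \eqref{eq:Fenchel_conjugate_of_the_supremum_of_proper_convex_functions} from \cite[Corollary~2.8.11]{Zalinescu:2002}, whereas you work on the primal side by showing that the convex hull of \( \inf_{\LocalIndex\in\ic{0,d}} \bc{ \delta_{ C_{\LocalIndex} } \UppPlus \varphi\np{\LocalIndex} } \) already has closed epigraph when the \( C_{\LocalIndex} \) are compact \emph{convex}; that argument is correct and more self-contained, and it delivers the attainment of the minimum for free.

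The genuine gap is in your derivation of~\eqref{eq:biconjugate_with_spheres}: you apply that same lemma with \( C_{\LocalIndex} = \CoordinateNorm{\TripleNormSphere}{\LocalIndex} \), but the spheres are not convex, and for nonconvex \( C_{\LocalIndex} \) the set of pairs \( \np{ \sum_{\LocalIndex} \lambda_{\LocalIndex} y_{\LocalIndex}, t } \) with one point \( y_{\LocalIndex} \in C_{\LocalIndex} \) per index is closed but not convex, hence is not the epigraph of the convex hull --- computing the convex hull forces convex combinations using several points of the same \( C_{\LocalIndex} \), which silently replaces each sphere by its convex hull, i.e.\ the ball. So your argument only re-derives~\eqref{eq:biconjugate_with_balls}. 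The sphere-constrained minimum is a priori strictly larger (for \( d=1 \), \( \varphi\np{1}=-1 \), \( \varphi\np{0}=0 \), at \( \primal=0 \) it gives \(0\) while the biconjugate equals \(-1\)), and it is exactly here that the hypotheses \( \varphi \geq 0 \) and \( \varphi\np{0}=0 \) of Item~\ref{it:biconjugate_l0norm_varphi} must be used; your sketch invokes \( \varphi\np{0}=0 \) only to drop the \( \LocalIndex=0 \) term from the objective, which is not where the difficulty lies. The missing step --- the core of the paper's proof of Item~\ref{it:biconjugate_l0norm_varphi} --- is: given a ball-feasible family with \( z^{(\LocalIndex)} \in \CoordinateNorm{\TripleNormBall}{\LocalIndex} \), set \( \mu_{\LocalIndex} = \lambda_{\LocalIndex} \CoordinateNorm{\TripleNorm{z^{(\LocalIndex)}}}{\LocalIndex} \leq \lambda_{\LocalIndex} \) and \( s^{(\LocalIndex)} = z^{(\LocalIndex)} / \CoordinateNorm{\TripleNorm{z^{(\LocalIndex)}}}{\LocalIndex} \in \CoordinateNorm{\TripleNormSphere}{\LocalIndex} \) (any sphere point if \( z^{(\LocalIndex)}=0 \)); then \( \sum_{\LocalIndex} \mu_{\LocalIndex} \leq 1 \), the slack being absorbed at zero cost by \( \mu_0 \) since \( \varphi\np{0}=0 \), and the objective does not increase since \( \varphi \geq 0 \). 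Once this is inserted, the rest of your chain --- \eqref{eq:pseudonormlzero_convex_minimum} by the change of variables \( z^{(\LocalIndex)} = \mu_{\LocalIndex} s^{(\LocalIndex)} \), then \eqref{eq:biconjugate_l0norm_varphi} by evaluating at \( \normalized\np{\primal} \) and rescaling by homogeneity --- goes through as you describe.
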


\begin{proof} 
  We first note that \( \SFMbi{\np{ \varphi \circ \lzero}}{\CouplingCapra}
  =\LFMr{ \bp{ \SFM{\np{ \varphi \circ \lzero}}{\CouplingCapra} } }
  \circ \normalized \), by~\eqref{eq:CAPRA_Fenchel-Moreau_biconjugate},
  and we study \( \LFMr{ \bp{ \SFM{\np{ \varphi \circ \lzero}}{\CouplingCapra} } } \).
  \medskip

  \begin{enumerate}
  \item 
    Let \( \varphi : \ic{0,d} \to \barRR \) be a function.
    The equality~\eqref{eq:Biconjugate_ofvarphi_of_lzero} is a straightforward
    consequence of the expression~\eqref{eq:CAPRA_Fenchel-Moreau_biconjugate}
    for a \Capra-biconjugate,
    and of the fact that \( \normalized\np{\primal}= \frac{\primal}{\TripleNorm{\primal}} \)
    when \( \primal \neq 0 \) by~\eqref{eq:normalization_mapping}.
    We have
    \begin{align*}
           \LFMr{ \bp{ \SFM{\np{ \varphi \circ \lzero}}{\CouplingCapra} } }
         &= 
          \LFMr{ \Bp{ 
          \sup_{\LocalIndex\in\ic{0,d}} \bc{ \CoordinateNormDual{\TripleNorm{\cdot}}{\LocalIndex} -\varphi\np{\LocalIndex} } } }
          \tag{by~\eqref{eq:conjugate_l0norm_varphi} }
      \\
        &=
          \LFMr{ \Bp{ \sup_{\LocalIndex\in\ic{0,d}} \bc{ \sigma_{ \CoordinateNorm{\TripleNormBall}{\LocalIndex} } -\varphi\np{\LocalIndex} } } }
          \intertext{by~\eqref{eq:norm_dual_norm} as \( \CoordinateNorm{\TripleNormBall}{\LocalIndex}
          \) is the unit ball of the norm~\( \CoordinateNorm{\TripleNorm{\cdot}}{\LocalIndex} \)
          by~\eqref{eq:coordinate_norm_unit_sphere_ball}
          and with the convention 
          \( \CoordinateNorm{\TripleNormBall}{0} =\{0\} \)}
        &=
          \LFMr{ \Bp{ \sup_{\LocalIndex\in\ic{0,d}} \bc{ 
          \LFM{ \delta_{  \CoordinateNorm{\TripleNormBall}{\LocalIndex} } } - \varphi\np{\LocalIndex} } } }
          \tag{because 
          \( \LFM{ \delta_{  \CoordinateNorm{\TripleNormBall}{\LocalIndex} } }=
          \sigma_{ \CoordinateNorm{\TripleNormBall}{\LocalIndex} } \) }
      \\
        &=
          \LFMr{ \Bp{ \sup_{\LocalIndex\in\ic{0,d}} 
          \LFM{ \bp{ \delta_{  \CoordinateNorm{\TripleNormBall}{\LocalIndex} } + \varphi\np{\LocalIndex} } } } }
          \tag{by property of conjugacies}
      \\
        &=
          \LFMr{ \bgp{ \LFM{ \Bp{ \inf_{\LocalIndex\in\ic{0,d}} \bc{ 
          \delta_{  \CoordinateNorm{\TripleNormBall}{\LocalIndex} } + \varphi\np{\LocalIndex} } } } } }
          \intertext{as conjugacies, being dualities, turn infima into suprema}
        &=
          \LFMbi{ \Bp{ \inf_{\LocalIndex\in\ic{0,d}} \bc{ 
          \delta_{  \CoordinateNorm{\TripleNormBall}{\LocalIndex} } + \varphi\np{\LocalIndex} } } }
          \tag{by~\eqref{eq:Fenchel_biconjugate}}
          \eqfinp
    \end{align*}
    Thus, we have obtained~\eqref{eq:Biconjugate_of_min_balls_ind}
    and \eqref{eq:Biconjugate_of_min_balls_ind_bis}.
    Now, if we follow again the above sequence of equalities, 
    we see that, everywhere, we can replace 
    the balls~\( \CoordinateNorm{\TripleNormBall}{\LocalIndex} \) by
    the spheres~\( \CoordinateNorm{\TripleNormSphere}{\LocalIndex} \), since
    \( \CoordinateNormDual{\TripleNorm{\cdot}}{\LocalIndex} =
    \sigma_{ \CoordinateNorm{\TripleNormSphere}{\LocalIndex} } =
    \LFM{ \delta_{  \CoordinateNorm{\TripleNormSphere}{\LocalIndex} } } \).
    Thus, we obtain~\eqref{eq:Biconjugate_of_min_spheres_ind}
    and \eqref{eq:Biconjugate_of_min_spheres_ind_bis}.
 
  \item 
    Let \( \varphi : \ic{0,d} \to \RR \) be a function.
    Then the closed convex function \( \LFMr{ \bp{ \SFM{\np{ \varphi \circ \lzero}}{\CouplingCapra} } } \) 
    is proper. Indeed, on the one hand, it is easily seen that the function
    \( \SFM{\np{ \varphi \circ \lzero}}{\CouplingCapra} \) takes finite values,
    from which we deduce that the function
    \( \LFMr{ \bp{ \SFM{\np{ \varphi \circ \lzero}}{\CouplingCapra} } } \) 
    never takes the value~$-\infty$.
    On the other hand, by~\eqref{eq:Biconjugate_ofvarphi_of_lzero} 
    and by the inequality \( \SFMbi{\np{ \varphi \circ \lzero } }{\CouplingCapra}
    \leq \varphi \circ \lzero \) obtained from~\eqref{eq:galois-cor},
    we deduce that the function
    \( \LFMr{ \bp{ \SFM{\np{ \varphi \circ \lzero}}{\CouplingCapra} } } \) 
    never takes the value~$+\infty$ on the unit sphere.
    Therefore, the\( \LFMr{ \bp{ \SFM{\np{ \varphi \circ \lzero}}{\CouplingCapra} } } \) 
    is proper.
    \medskip

    For the remaining expressions for
    \( \LFMr{ \bp{ \SFM{\np{ \varphi \circ \lzero}}{\CouplingCapra} } } \),
    we use a formula 
    \cite[Corollary~2.8.11]{Zalinescu:2002}
    for the Fenchel conjugate of the supremum of proper convex functions
    \(\fonctionprimal_{\LocalIndex} : \RR^d \to \barRR \),
    $\LocalIndex\in\ic{0,n}$: 
    \begin{equation}
      \label{eq:Fenchel_conjugate_of_the_supremum_of_proper_convex_functions}
      \bigcap_{\LocalIndex=0,1,\ldots,n} \dom\fonctionprimal_{\LocalIndex} \neq \emptyset
      \implies
      \LFM{ \bp{ \sup_{\LocalIndex=0,1,\ldots,n} \fonctionprimal_{\LocalIndex} } }
      =
      \min_{\np{\lambda_0,\lambda_1,\ldots,\lambda_n}\in \Delta_{n+1}} 
      \LFM{ \Bp{ \sum_{ \LocalIndex=0}^{n} \lambda_{\LocalIndex} \fonctionprimal_{\LocalIndex} } }
      \eqfinv
    \end{equation}
    where \( \dom\fonctionprimal= 
    \defset{\primal\in\RR^d}{\fonctionprimal\np{\primal}<+\infty} \) is 
    the effective domain 
    (see Appendix~\ref{Appendix}), 
    and where \( \Delta_{n+1} \) is the simplex of~$\RR^{n+1}$.
    We obtain 
    \begin{align*}
          \LFMr{ \bp{ \SFM{\np{ \varphi \circ \lzero}}{\CouplingCapra} } }
        &= 
          \LFMr{ \bp{ 
          \sup_{\LocalIndex\in\ic{0,d}} \Bc{ \CoordinateNormDual{\TripleNorm{\cdot}}{\LocalIndex} -\varphi\np{\LocalIndex} } } }
          \tag{by~\eqref{eq:conjugate_l0norm_varphi} }
      \\
        &=
          \LFMr{ \Bp{ \sup_{\LocalIndex\in\ic{0,d}} \Bc{ \sigma_{ \CoordinateNorm{\TripleNormBall}{\LocalIndex} } -\varphi\np{\LocalIndex} } } }
          \intertext{by~\eqref{eq:norm_dual_norm} as \( \CoordinateNorm{\TripleNormBall}{\LocalIndex}
          \) is the unit ball of the norm~\( \CoordinateNorm{\TripleNorm{\cdot}}{\LocalIndex} \)
           by~\eqref{eq:coordinate_norm_unit_sphere_ball}
          and with 
          \( \CoordinateNorm{\TripleNormBall}{0} =\{0\} \)}
        &=
          \min_{\np{\lambda_0,\lambda_1,\ldots,\lambda_d}\in \Delta_{d+1}} 
          \LFMr{ \Bp{ \sum_{ \LocalIndex=0}^{d} \lambda_{\LocalIndex} 
          \Bc{ \sigma_{ \CoordinateNorm{\TripleNormBall}{\LocalIndex} } -\varphi\np{\LocalIndex} 
          } } } 
          \tag{
          by~\eqref{eq:Fenchel_conjugate_of_the_supremum_of_proper_convex_functions} }
          \intertext{by \cite[Corollary~2.8.11]{Zalinescu:2002}, as the functions
          \(\fonctionprimal_{\LocalIndex} =  \sigma_{ \CoordinateNorm{\TripleNormBall}{\LocalIndex} } -\varphi\np{\LocalIndex} \)
          are proper convex (they even take finite values), for $\LocalIndex\in\ic{0,d}$}
        &=
          \min_{\np{\lambda_0,\lambda_1,\ldots,\lambda_d}\in \Delta_{d+1}} 
          \LFMr{ \Bp{ \sigma_{ \sum_{ \LocalIndex=0}^{d} \lambda_{\LocalIndex}
          \CoordinateNorm{\TripleNormBall}{\LocalIndex} }
          - \sum_{ \LocalIndex=0}^{d} \lambda_{\LocalIndex} \varphi\np{\LocalIndex} 
          } }
          \intertext{as, for all $\LocalIndex\in\ic{1,d}$, 
          \( \lambda_{\LocalIndex} \sigma_{ \CoordinateNorm{\TripleNormBall}{\LocalIndex} } = 
          \sigma_{ \lambda_{\LocalIndex} \CoordinateNorm{\TripleNormBall}{\LocalIndex} } \) since \( \lambda_{\LocalIndex} \geq 0 \), 
          and then using the well-known property that the support function of 
          a Minkowski sum of subsets is the sum of the support functions of 
          the individual subsets \cite[p.~226]{Hiriart-Urruty-Lemarechal-I:1993}}
          &=
          \min_{\np{\lambda_0,\lambda_1,\ldots,\lambda_d}\in \Delta_{d+1}} 
          \LFMr{ \Bp{ \sigma_{ \sum_{ \LocalIndex=1}^{d} \lambda_{\LocalIndex}
          \CoordinateNorm{\TripleNormBall}{\LocalIndex} }
          - \sum_{ \LocalIndex=0}^{d} \lambda_{\LocalIndex} \varphi\np{\LocalIndex} 
          } }
          \tag{thanks to the convention 
          \( \CoordinateNorm{\TripleNormBall}{0} =\{0\} \)}
      \\
        &=
          \min_{\np{\lambda_0,\lambda_1,\ldots,\lambda_d}\in \Delta_{d+1}} 
          \Bp{ \LFMr{ \bp{ \sigma_{ \sum_{\LocalIndex=1}^{d} \lambda_{\LocalIndex}
          \CoordinateNorm{\TripleNormBall}{\LocalIndex} } } }
          + \sum_{\LocalIndex=0}^{d} \lambda_{\LocalIndex} \varphi\np{\LocalIndex} 
          } 
          \tag{by property of conjugacies} 
      \\
        &=
          \min_{\np{\lambda_0,\lambda_1,\ldots,\lambda_d}\in \Delta_{d+1}} \Bp{ 
          \delta_{ \sum_{\LocalIndex=1}^{d} \lambda_{\LocalIndex} \CoordinateNorm{\TripleNormBall}{\LocalIndex} } 
          +
          \sum_{\LocalIndex=0}^{d} \lambda_{\LocalIndex}  \varphi\np{\LocalIndex} }
          \tag{because \( \sum_{\LocalIndex=1}^{d} \lambda_{\LocalIndex}
          \CoordinateNorm{\TripleNormBall}{\LocalIndex} \)
          is a closed convex set. }
          \intertext{Therefore, we deduce that, for all \( \primal \in \RR^d \), }
          \LFMr{ \bp{ \SFM{\np{ \varphi \circ \lzero}}{\CouplingCapra} } }\np{\primal} 
        &= 
          \min_{ \substack{%
          \np{\lambda_0,\lambda_1,\ldots,\lambda_d}\in \Delta_{d+1} 
      \\
      \primal \in \sum_{\LocalIndex=1}^{ d } \lambda_{\LocalIndex} \CoordinateNorm{\TripleNormBall}{\LocalIndex} 
      } } 
      \sum_{\LocalIndex=0}^{ d } \lambda_{\LocalIndex} \varphi\np{\LocalIndex} 
\eqsepv \text{ which is~\eqref{eq:biconjugate_with_balls}.}
    \end{align*}

  \item 
    Let \( \varphi : \ic{0,d} \to \RR_+ \) be a function 
    such that \( \varphi\np{0}=0 \). 
    Then the closed convex function \( \LFMr{ \bp{ \SFM{\np{ \varphi \circ \lzero}}{\CouplingCapra} } } \) 
    is proper, as seen above. 
    We go on with 
    \begin{align*}
      \LFMr{ \bp{ \SFM{\np{ \varphi \circ \lzero}}{\CouplingCapra} } }\np{\primal} 
      &= 
        \min_{ \substack{%
        \np{\lambda_0,\lambda_1,\ldots,\lambda_d}\in \Delta_{d+1} 
      \\
      \primal \in \sum_{\LocalIndex=1}^{ d } \lambda_{\LocalIndex} \CoordinateNorm{\TripleNormBall}{\LocalIndex} 
      } } 
      \sum_{\LocalIndex=1}^{ d } \lambda_{\LocalIndex} \varphi\np{\LocalIndex}
      \tag{because \( \varphi\np{0}=0 \) }
      \\
      &= 
        \min_{ \substack{%
        z^{(1)} \in \CoordinateNorm{\TripleNormBall}{1}, 
        \ldots, z^{(d)} \in \CoordinateNorm{\TripleNormBall}{d} 
      \\
      \lambda_1 \geq 0, \ldots, \lambda_d \geq 0
      \\
      \sum_{ \LocalIndex=1 }^{ d } \lambda_{\LocalIndex} \leq 1 
      \\
      \sum_{ \LocalIndex=1 }^{ d } \lambda_{\LocalIndex} z^{({\LocalIndex})} = \primal
      } }  
      \sum_{ \LocalIndex=1 }^{ d } \lambda_{\LocalIndex} \varphi\np{\LocalIndex} 
      \intertext{because \( \np{\lambda_0,\lambda_1,\ldots,\lambda_d}\in \Delta_{d+1}\) 
      if and only if 
      \(  \lambda_1 \geq 0, \ldots, \lambda_d \geq 0 \) and
      \( \sum_{ \LocalIndex=1 }^{ d } \lambda_{\LocalIndex} \leq 1 \) and 
      \( \lambda_0= 1-\sum_{ \LocalIndex=1 }^{ d } \lambda_{\LocalIndex} \)}
      &=
        \min_{ \substack{%
        s^{(1)} \in \CoordinateNorm{\TripleNormSphere}{1}, 
        \ldots, s^{(d)} \in \CoordinateNorm{\TripleNormSphere}{d} 
      \\
      \mu_1 \geq 0, \ldots, \mu_d \geq 0
      \\
      \sum_{ \LocalIndex=1 }^{ d } \mu_{\LocalIndex} \leq 1 
      \\
      \sum_{ \LocalIndex=1 }^{ d } \mu_{\LocalIndex} s^{({\LocalIndex})} = \primal
      } }  
      \sum_{ \LocalIndex=1 }^{ d } \mu_{\LocalIndex} \varphi\np{\LocalIndex} 
      \end{align*}
because, on the one hand, the inequality $\leq$ is obvious as 
      the unit sphere \( \CoordinateNorm{\TripleNormSphere}{\LocalIndex} \)
      in~\eqref{eq:dual_coordinate_norm_unit_sphere_ball}
      is included in the unit ball \( \CoordinateNorm{\TripleNormBall}{\LocalIndex} \)
      for all $\LocalIndex\in\ic{1,d}$;
      and, on the other hand, 
      the inequality $\geq$ comes from putting,
      for $\LocalIndex\in\ic{1,d}$, 
      \(  \mu_{\LocalIndex} = \lambda_{\LocalIndex} \CoordinateNorm{\TripleNorm{z^{({\LocalIndex})}}}{\LocalIndex} \)
      and observing that 
      i) $\sum_{i=1}^d  \mu_{\LocalIndex} = \sum_{i=1}^d  \lambda_{\LocalIndex} 
      \CoordinateNorm{\TripleNorm{z^{({\LocalIndex})}}}{\LocalIndex} \leq \sum_{i=1}^d  \lambda_{\LocalIndex}\le 1$
      because \( \CoordinateNorm{\TripleNorm{z^{({\LocalIndex})}}}{\LocalIndex}
      \leq 1 \)
      as \( z^{(\LocalIndex)} \in \CoordinateNorm{\TripleNormBall}{\LocalIndex} \) 
      ii) 
      for all $\LocalIndex\in\ic{1,d}$, there exists \( s^{({\LocalIndex})} 
      \in \CoordinateNorm{\TripleNormSphere}{\LocalIndex} \) such that 
      \( \lambda_{\LocalIndex} z^{({\LocalIndex})} = \mu_{\LocalIndex} s^{({\LocalIndex})} \) 
      (take any $s^{({\LocalIndex})}$ when $z^{({\LocalIndex})}=0$
      because $\mu_{\LocalIndex}=0$, and take
      \( s^{({\LocalIndex})}=\frac{z^{({\LocalIndex})}}{ \CoordinateNorm{\TripleNorm{z^{({\LocalIndex})}}}{\LocalIndex} } \) 
      when $z^{({\LocalIndex})} \neq 0$)
      iii) 
      \( \sum_{ \LocalIndex=1 }^{ d } \lambda_{\LocalIndex} \varphi\np{\LocalIndex} \geq
      \sum_{ \LocalIndex=1 }^{ d } \lambda_{\LocalIndex}
      \CoordinateNorm{\TripleNorm{z^{({\LocalIndex})}}}{\LocalIndex} \varphi\np{\LocalIndex} 
      = \sum_{ \LocalIndex=1 }^{ d } \mu_{\LocalIndex} \varphi\np{\LocalIndex} \) 
      because \( 1 \geq \CoordinateNorm{\TripleNorm{z^{({\LocalIndex})}}}{\LocalIndex} \)
      and \( \varphi\np{\LocalIndex} \geq 0 \) 
      \begin{align*}
     \phantom{ \LFMr{ \bp{ \SFM{\np{ \varphi \circ \lzero}}{\CouplingCapra} } }\np{\primal} }
      &=
        \min_{ \substack{%
        z^{(1)} \in \RR^d, \ldots, z^{(d)} \in \RR^d 
      \\
      \sum_{ \LocalIndex=1 }^{ d } \CoordinateNorm{\TripleNorm{z^{({\LocalIndex})}}}{\LocalIndex} \leq 1 
      \\
      \sum_{ \LocalIndex=1 }^{ d } z^{({\LocalIndex})} = \primal
      } }
      \sum_{ \LocalIndex=1 }^{ d } \varphi\np{\LocalIndex}
      \CoordinateNorm{\TripleNorm{z^{({\LocalIndex})}}}{\LocalIndex} 
      \eqfinv
    \end{align*}
    by putting \( z^{({\LocalIndex})} = \mu_{\LocalIndex} s^{({\LocalIndex})}\),
    for all $\LocalIndex\in\ic{1,d}$.
    Thus, we have obtained~\eqref{eq:biconjugate_with_spheres}.

    Finally, from \( \SFMbi{\np{ \varphi \circ \lzero}}{\CouplingCapra}
    =\LFMr{ \bp{ \SFM{\np{ \varphi \circ \lzero}}{\CouplingCapra} } }
    \circ \normalized \), by~\eqref{eq:CAPRA_Fenchel-Moreau_biconjugate},
    we get that 
    \begin{equation*}
      \SFMbi{\np{ \varphi \circ \lzero}}{\CouplingCapra}\np{\primal} 
      = \frac{ 1 }{ \TripleNorm{\primal} } 
      \min_{ \substack{%
          z^{(1)} \in \RR^d, \ldots, z^{(d)} \in \RR^d 
          \\
          \sum_{ \LocalIndex=1 }^{ d } \CoordinateNorm{\TripleNorm{z^{(\LocalIndex)}}}{\LocalIndex} \leq \TripleNorm{\primal}
          \\
          \sum_{ \LocalIndex=1 }^{ d } z^{(\LocalIndex)} = \primal  } }
      \sum_{ \LocalIndex=1 }^{ d } \varphi\np{\LocalIndex} 
      \CoordinateNorm{\TripleNorm{z^{(\LocalIndex)}}}{\LocalIndex} 
      \eqsepv \forall \primal \in \RR^d\backslash\{0\}
      \eqfinv
    \end{equation*}
    where we have used that 
    \( \normalized\np{\primal}= \frac{\primal}{\TripleNorm{\primal}} \)
    when \( \primal \neq 0 \) by~\eqref{eq:normalization_mapping}.
    Therefore, we have proved~\eqref{eq:biconjugate_l0norm_varphi}.

  \end{enumerate}

  \medskip

  This ends the proof.
\end{proof}

Before finishing that part on \Capra-biconjugates, we provide 
the following characterization of when the characteristic functions \( \delta_{ \LevelSet{\lzero}{k} } \)
are $\CouplingCapra$-convex.

\begin{corollary}
  Let $\TripleNorm{\cdot}$ be a norm on~$\RR^d$,
  with associated sequence 
  \( \sequence{\CoordinateNorm{\TripleNorm{\cdot}}{\LocalIndex}}{\LocalIndex\in\ic{1,d}} \)
  of coordinate-$k$ norms in Definition~\ref{de:coordinate_norm}
  and 
  associated \Capra\ coupling $\CouplingCapra$ in~\eqref{eq:coupling_CAPRA}.

  The following statements are equivalent.
  \begin{enumerate}
  \item 
    The sequence 
    \( \sequence{\CoordinateNorm{\TripleNorm{\cdot}}{\LocalIndex}}{\LocalIndex\in\ic{1,d}} \)
    of coordinate-$k$ norms 
    is strictly decreasingly graded with respect to the \lzeropseudonorm,
    as in Definition~\ref{de:decreasingly_graded}.
  \item 
    For all \( k \in \ic{0,d} \), 
    the characteristic functions \( \delta_{ \LevelSet{\lzero}{k} } \)
    are $\CouplingCapra$-convex, that is, 
    \begin{equation}
      \SFMbi{ \delta_{ \LevelSet{\lzero}{k} } }{\CouplingCapra} 
      =
      \delta_{ \LevelSet{\lzero}{k} } 
      \eqsepv k\in\ic{0,d} 
      \eqfinp
    \end{equation}
  \end{enumerate}
\end{corollary}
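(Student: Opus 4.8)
The plan is to compute the \Capra-biconjugate $\SFMbi{\delta_{\LevelSet{\lzero}{k}}}{\CouplingCapra}$ in closed form for each $k\in\ic{0,d}$, then peel off the normalization to turn the identity $\SFMbi{\delta_{\LevelSet{\lzero}{k}}}{\CouplingCapra}=\delta_{\LevelSet{\lzero}{k}}$ into a statement purely about unit spheres, and finally recognize that statement as strict decreasing gradedness. First I would apply Proposition~\ref{pr:pseudonormlzero_biconjugate_varphi} with $\varphi=\delta_{\ic{0,k}}$, so that $\varphi\circ\lzero=\delta_{\LevelSet{\lzero}{k}}$. By~\eqref{eq:CAPRA_Fenchel-Moreau_biconjugate} and~\eqref{eq:Biconjugate_ofvarphi_of_lzero} we have $\SFMbi{\delta_{\LevelSet{\lzero}{k}}}{\CouplingCapra}=\LFMr{\bp{\SFM{\delta_{\LevelSet{\lzero}{k}}}{\CouplingCapra}}}\circ\normalized$, and by~\eqref{eq:Biconjugate_of_min_balls_ind_bis} the closed convex function $\LFMr{\bp{\SFM{\delta_{\LevelSet{\lzero}{k}}}{\CouplingCapra}}}$ is the Fenchel biconjugate of $\primal\mapsto\inf\bset{\varphi\np{l}}{\primal\in\CoordinateNorm{\TripleNormBall}{l}\eqsepv l\in\ic{0,d}}$. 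With $\varphi=\delta_{\ic{0,k}}$, and since the balls $\CoordinateNorm{\TripleNormBall}{l}$ are nondecreasing in~$l$ by~\eqref{eq:coordinate_norm_unit-balls_inclusions} (with the convention $\CoordinateNorm{\TripleNormBall}{0}=\{0\}$), that function is exactly $\delta_{\CoordinateNorm{\TripleNormBall}{k}}$, which is the indicator of a closed convex set, hence already closed convex and equal to its own Fenchel biconjugate. Therefore $\SFMbi{\delta_{\LevelSet{\lzero}{k}}}{\CouplingCapra}=\delta_{\CoordinateNorm{\TripleNormBall}{k}}\circ\normalized$.

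Next I would remove the normalization. Since $\normalized\np{0}=0\in\CoordinateNorm{\TripleNormBall}{k}$ and, for $\primal\neq0$, $\normalized\np{\primal}=\primal/\TripleNorm{\primal}\in\TripleNormSphere$ (so $\normalized\np{\primal}\in\CoordinateNorm{\TripleNormBall}{k}$ iff $\normalized\np{\primal}\in\CoordinateNorm{\TripleNormBall}{k}\cap\TripleNormSphere$), while the $0$-homogeneity~\eqref{eq:lzeropseudonorm_is_0-homogeneous} of~$\lzero$ gives, for $\primal\neq0$, that $\primal\in\LevelSet{\lzero}{k}$ iff $\normalized\np{\primal}\in\LevelSet{\lzero}{k}\cap\TripleNormSphere$, and since $\primal\mapsto\normalized\np{\primal}$ maps $\RR^d\backslash\{0\}$ onto~$\TripleNormSphere$, one gets the equivalence
\[
  \SFMbi{\delta_{\LevelSet{\lzero}{k}}}{\CouplingCapra}=\delta_{\LevelSet{\lzero}{k}}
  \iff
  \CoordinateNorm{\TripleNormBall}{k}\cap\TripleNormSphere=\LevelSet{\lzero}{k}\cap\TripleNormSphere
  \eqfinp
\]
Thus statement~(2) is equivalent to the family of set equalities $\CoordinateNorm{\TripleNormBall}{k}\cap\TripleNormSphere=\LevelSet{\lzero}{k}\cap\TripleNormSphere$ for all $k\in\ic{0,d}$.

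Then I would identify those set equalities with strict gradedness. One inclusion always holds: $\LevelSet{\lzero}{k}\cap\TripleNormSphere=\bigcup_{\cardinal{K}\leq k}\np{\FlatRR_{K}\cap\TripleNormSphere}$ by~\eqref{eq:pseudonormlzero_level_set} is contained in $\CoordinateNorm{\TripleNormBall}{k}$ by~\eqref{eq:coordinate_norm_unit_ball_property}, and also in~$\TripleNormSphere$. So, for a fixed~$k$, the set equality reduces to the reverse inclusion $\CoordinateNorm{\TripleNormBall}{k}\cap\TripleNormSphere\subset\LevelSet{\lzero}{k}$. Since $\CoordinateNorm{\TripleNorm{\primal}}{k}\geq\TripleNorm{\primal}$ for all~$\primal$ by~\eqref{eq:coordinate_norm_inequalities}, one has $\CoordinateNorm{\TripleNormBall}{k}\cap\TripleNormSphere=\bset{\primal\in\TripleNormSphere}{\CoordinateNorm{\TripleNorm{\primal}}{k}=\TripleNorm{\primal}}$, so this reverse inclusion says precisely that $\CoordinateNorm{\TripleNorm{\primal}}{k}=\TripleNorm{\primal}\implies\lzero\np{\primal}\leq k$ for all $\primal\in\TripleNormSphere$; by $1$-homogeneity of the two norms and $0$-homogeneity of~$\lzero$ this holds on~$\TripleNormSphere$ iff it holds on all of~$\RR^d$, which is the ``$\Longleftarrow$'' half of equivalence~\eqref{eq:coordinate_norm_strictly_graded_b} at level $l=k$; combined with the ``$\Longrightarrow$'' half~\eqref{eq:coordinate_norm_graded_b}, which always holds by Proposition~\ref{pr:coordinate_norm_graded}~(Item~\ref{it:coordinate_norm_graded}), it is exactly~\eqref{eq:coordinate_norm_strictly_graded_b} at level $l=k$. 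The remaining levels are automatic on both sides: $\CoordinateNorm{\TripleNormBall}{0}\cap\TripleNormSphere=\emptyset=\LevelSet{\lzero}{0}\cap\TripleNormSphere$, while $\CoordinateNorm{\TripleNormBall}{d}=\TripleNormBall$ by~\eqref{eq:coordinate_norm-d_equality} and $\LevelSet{\lzero}{d}=\RR^d$, and~\eqref{eq:coordinate_norm_strictly_graded_b} at $l=d$ holds trivially. Running through $l\in\ic{1,d}$, statement~(2) is thus equivalent to~\eqref{eq:coordinate_norm_strictly_graded_b} holding for all $l\in\ic{1,d}$, i.e.\ to statement~(1) by Definition~\ref{de:decreasingly_graded}.

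The computations are mostly bookkeeping; the one delicate point is in the first step, where the integer-valued infimal function $\primal\mapsto\inf\bset{\varphi\np{l}}{\primal\in\CoordinateNorm{\TripleNormBall}{l}}$ appearing in~\eqref{eq:Biconjugate_of_min_balls_ind_bis} must be correctly collapsed to $\delta_{\CoordinateNorm{\TripleNormBall}{k}}$ using the nestedness of the balls, after which the observation that this indicator is already closed convex makes the outer Fenchel biconjugate the identity on it — this is what turns the general biconjugacy formula into a clean expression. A secondary care point is tracking the origin and the conventions $\CoordinateNorm{\TripleNormBall}{0}=\{0\}$ and $\CoordinateNormDual{\TripleNorm{\cdot}}{0}=0$ all the way down to the sphere-level statements.
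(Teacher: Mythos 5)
Your proposal is correct and follows essentially the same route as the paper: both compute $\SFMbi{\delta_{\LevelSet{\lzero}{k}}}{\CouplingCapra}$ by specializing the biconjugacy formula to $\varphi=\delta_{\ic{0,k}}$, collapse the nested balls to $\delta_{\CoordinateNorm{\TripleNormBall}{k}}$, use that this indicator is already closed convex, and then unwind the composition with~$\normalized$ into the equivalence \eqref{eq:strictly_decreasingly_graded_b}. The only cosmetic difference is that you pass through the sphere-level identity $\CoordinateNorm{\TripleNormBall}{k}\cap\TripleNormSphere=\LevelSet{\lzero}{k}\cap\TripleNormSphere$, whereas the paper rewrites $\normalized^{-1}\np{\CoordinateNorm{\TripleNormBall}{k}}$ directly as $\bset{\primal\in\RR^d}{\CoordinateNorm{\TripleNorm{\primal}}{k}=\TripleNorm{\primal}}$; the two bookkeeping choices are equivalent.
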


\begin{proof}
We start by providing an expression for 
\( \SFMbi{ \delta_{ \LevelSet{\lzero}{k} } }{\CouplingCapra} \).
For any \( k \in \ic{0,d} \), we have 
  \begin{align*}
    \SFMbi{ \delta_{ \LevelSet{\lzero}{k} } }{\CouplingCapra}
    &=
      \LFMbi{ \Bp{ \inf_{\LocalIndex\in\ic{0,d}} \bc{ 
      \delta_{  \CoordinateNorm{\TripleNormBall}{\LocalIndex} } \UppPlus
      \delta_{\ic{0,k}}\np{\LocalIndex} } } } \circ \normalized
      \tag{by~\eqref{eq:Biconjugate_of_min_balls_ind}
      with the functions
      \( \varphi=\delta_{\ic{0,k}} \)}
    \\        
    &=
      \LFMbi{ \Bp{ \inf_{\LocalIndex=0,1,\ldots,k} 
      \delta_{ \CoordinateNorm{\TripleNormBall}{\LocalIndex} } } } \circ \normalized
    \\        
    &=
      \LFMbi{ \bp{ \delta_{ \CoordinateNorm{\TripleNormBall}{k} } } } \circ
      \normalized
      \intertext{by the inclusions \( \CoordinateNorm{\TripleNormBall}{1} 
      \subset \cdots \subset
      \CoordinateNorm{\TripleNormBall}{k} \)
      in~\eqref{eq:coordinate_norm_unit-balls_inclusions}
      and by the convention \( \CoordinateNorm{\TripleNormBall}{0} = \{0\} \)}
    &=
      \delta_{ \CoordinateNorm{\TripleNormBall}{k} } \circ \normalized
      \tag{because the unit ball \( \CoordinateNorm{\TripleNormBall}{k} \)
      is closed and convex}
    \\        
    &=
      \delta_{ \normalized^{-1}\np{\CoordinateNorm{\TripleNormBall}{k} } }
      \intertext{ where, by~\eqref{eq:normalization_mapping}, \( \normalized^{-1}\np{\CoordinateNorm{\TripleNormBall}{k} } =
      \{0\} \cup 
      \nset{ \primal \in \RR^d\backslash\{0\} }{\CoordinateNorm{\TripleNorm{\frac{\primal}{\TripleNorm{\primal}}}}{k}
      \leq 1 } \), so that we go on with}
    &=
      \delta_{ \nset{ \primal \in \RR^d }{ \CoordinateNorm{\TripleNorm{\primal}}{k}
      \leq \TripleNorm{\primal} } }
    \\        
    &=
      \delta_{ \nset{ \primal \in \RR^d }{ \CoordinateNorm{\TripleNorm{\primal}}{k}
      =  \TripleNorm{\primal} } }
      \tag{using the equality and inequalities between norms
      in~\eqref{eq:coordinate_norm_inequalities}}
  \end{align*}
  Therefore, we have
  \begin{align*}
    \forall k \in
    &\ic{0,d} \eqsepv 
      \SFMbi{ \delta_{ \LevelSet{\lzero}{k} } }{\CouplingCapra}
      = \delta_{ \LevelSet{\lzero}{k} }
    \\
    & \Leftrightarrow
      \forall k \in \ic{0,d} \eqsepv 
      \bgp{ \primal \in \LevelSet{\lzero}{k} \iff
      \CoordinateNorm{\TripleNorm{\primal}}{k}
      = \TripleNorm{\primal} \eqsepv \forall \primal \in \RR^d }
    \\
    & \Leftrightarrow
      \textrm{\eqref{eq:strictly_decreasingly_graded_b} holds true for
      the sequence 
      \(
      \sequence{\CoordinateNorm{\TripleNorm{\cdot}}{\LocalIndex}}{\LocalIndex\in\ic{1,d}}
      \)}
      \tag{because \( \primal \in \LevelSet{\lzero}{k} \iff \lzero\np{\primal} \leq k
      \) by definition of the level sets in~\eqref{eq:pseudonormlzero_level_set}}
    \\
    & \Leftrightarrow
      \textrm{  \( \sequence{\CoordinateNorm{\TripleNorm{\cdot}}{\LocalIndex}}{\LocalIndex\in\ic{1,d}} \)
      is strictly decreasingly graded \wrt\ the \lzeropseudonorm}
  \end{align*}
  because this sequence is nonincreasing
  by~\eqref{eq:dual_coordinate_norm_inequalities}
  (see Definition~\ref{de:decreasingly_graded}).
  \medskip

  This ends the proof.
\end{proof}

Notice that, by Item~\ref{it:coordinate_norm_strictly_graded} 
in Proposition~\ref{pr:coordinate_norm_graded},
it suffices that the normed space 
\( \bp{\RR^d,\TripleNorm{\cdot}} \) be strictly convex
to obtain that the characteristic functions \( \delta_{ \LevelSet{\lzero}{k} } \)
are $\CouplingCapra$-convex, for all $k\in\ic{0,d} $.
This is the case when the source norm is the $\ell_p$-norm~$\norm{\cdot}_{p}$
for $p\in ]1,\infty[$. 
\medskip

Determinining sufficient conditions under which the \lzeropseudonorm\
is $\CouplingCapra$-convex requires additional notions. 
This question is treated in the companion
paper~\cite{Chancelier-DeLara:2020_Variational}.

\subsection{\Capra-subdifferentials related to the \lzeropseudonorm}
\label{Capra-subdifferentials_related_to_the_lzeropseudonorm}

With the Fenchel conjugacy, we calculate that 
\( \partial\delta_{ \LevelSet{\lzero}{k} }\np{\primal} =\{0\} \)
for \( \primal \in \LevelSet{\lzero}{k} \) and $k\in\ic{1,d} $
(when \( \primal \not\in \LevelSet{\lzero}{k} \), 
\( \partial\delta_{ \LevelSet{\lzero}{k} }\np{\primal} =\emptyset \)).
We also calculate that \( \partial\lzero\np{0} =\{0\} \) 
and \( \partial\lzero\np{\primal} =\emptyset \),
for all \( \primal \in \RR^d\backslash\{0\} \)
(indeed, this is a consequence of 
\( \LFMbi{ \lzero }\np{\primal}= 0 \neq \lzero\np{\primal}\)
when \( \primal \in \RR^d\backslash\{0\} \)).
Hence, the Fenchel conjugacy is not suitable
to handle the \lzeropseudonorm.

By contrast, we will now show that functions of 
the \lzeropseudonorm\ in~\eqref{eq:pseudo_norm_l0}
--- including the \lzeropseudonorm\ itself and 
the characteristic functions \( \delta_{ \LevelSet{\lzero}{k} } \) 
of its level sets~\eqref{eq:pseudonormlzero_level_set} ---
display \Capra-subdifferentials, as in~\eqref{eq:Capra-subdifferential_b}, that are related to 
the sequence of dual coordinate-$k$ norms in Definition~\ref{de:coordinate_norm}
as follows. For this purpose, 
we recall that the \emph{normal cone}~$\NORMAL_{\Convex}(\primal)$ 
to the (nonempty) closed convex subset~${\Convex} \subset \RR^d $
at~$\primal \in \Convex$ is the closed convex cone defined by 
\cite[p.136]{Hiriart-Urruty-Lemarechal-I:1993}
\begin{equation}
  \NORMAL_{\Convex}(\primal) =
  \bset{ \dual \in \RR^d }%
  {
    \proscal{\primal'-\primal}{\dual} \leq 0 \eqsepv 
    \forall \primal' \in \Convex
  }
  \eqfinp
  \label{eq:normal_cone}
\end{equation}

\begin{proposition}
  Let $\TripleNorm{\cdot}$ be a norm on~$\RR^d$,
  with associated sequence
  \( \sequence{\CoordinateNorm{\TripleNormDual{\cdot}}{\LocalIndex}}{\LocalIndex\in\ic{1,d}} \)
  of dual coordinate-$k$ norms, as 
  in Definition~\ref{de:coordinate_norm},
  and 
  associated \Capra-coupling $\CouplingCapra$ in~\eqref{eq:coupling_CAPRA}. 

  Let \( \varphi : \ic{0,d} \to \barRR \) be a function 
  and $\primal \in \RR^d$ be a vector.

  \begin{itemize}
  \item 
    The \Capra-subdifferential, as in~\eqref{eq:Capra-subdifferential_zero}, of the function
    \( \varphi \circ \lzero \) at~$\primal=0$ is given by 
    \begin{equation}
      \partial_{\CouplingCapra}\np{ \varphi \circ \lzero}\np{0}
      = \bigcap_{ \LocalIndex\in\ic{1,d} } \bc{ \varphi\np{\LocalIndex} \UppPlus \bp{-\varphi\np{0} } } 
      \CoordinateNormDual{\TripleNormBall}{\LocalIndex} 
      \eqfinv
    \end{equation}
    where, by convention 
    \( \lambda \CoordinateNormDual{\TripleNormBall}{\LocalIndex} =\emptyset \),
    for any \( \lambda \in [-\infty,0[ \), and 
    \( +\infty\CoordinateNormDual{\TripleNormBall}{\LocalIndex} =\RR^d \).
  \item 
    The \Capra-subdifferential, as in~\eqref{eq:Capra-subdifferential_neq_zero}, of the function
    \( \varphi \circ \lzero \) at~$\primal\not=0$ is given by
    the following cases
    \begin{itemize}
    \item 
      if \( l=\lzero\np{\primal} \geq 1 \) 
      and either \( \varphi\np{l}=-\infty \)
      or \( \varphi \equiv +\infty \), 
      then \( \partial_{\CouplingCapra}\np{ \varphi \circ \lzero}\np{\primal} =\RR^d\),
    \item 
      if \( l=\lzero\np{\primal} \geq 1 \) 
      and \( \varphi\np{l}=+\infty \) and there exists 
      \( \LocalIndex \in \ic{0,d} \) such that 
      \( \varphi\np{\LocalIndex} \not= +\infty \), then 
      \( \partial_{\CouplingCapra}\np{ \varphi \circ \lzero}\np{\primal} =\emptyset \),
    \item 
      if \( l=\lzero\np{\primal} \geq 1 \) and \( -\infty < \varphi\np{l} < +\infty \), then 
      \begin{equation}
        \dual \in \partial_{\CouplingCapra}\np{ \varphi \circ \lzero}\np{\primal} 
        \iff 
        \begin{cases}
          \dual \in 
          \NORMAL_{ \CoordinateNorm{\TripleNormBall}{l} }
          \np{\frac{ \primal }{ \CoordinateNorm{ \TripleNorm{\primal} }{l} } }
          \quad \mtext{and }
          \\
          l \in \argmax_{\LocalIndex\in\ic{0,d}} 
          \bc{
            \CoordinateNormDual{\TripleNorm{\dual}}{\LocalIndex}-\varphi\np{\LocalIndex}
          } \eqfinp
        \end{cases}
        \label{eq:pseudonormlzero_subdifferential}
      \end{equation}
      %
    \end{itemize}
  \end{itemize}
  \label{pr:pseudonormlzero_subdifferential_varphi}
\end{proposition}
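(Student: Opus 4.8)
The plan is to unfold the definition of the \Capra-subdifferential from~\eqref{eq:Capra-subdifferential_zero} and~\eqref{eq:Capra-subdifferential_neq_zero}, to substitute into it the closed form \( \SFM{\np{\varphi\circ\lzero}}{\CouplingCapra}=\sup_{\LocalIndex\in\ic{0,d}}\Bc{\CoordinateNormDual{\TripleNorm{\cdot}}{\LocalIndex}\LowPlus\np{-\varphi\np{\LocalIndex}}} \) given by Proposition~\ref{pr:pseudonormlzero_conjugate_varphi} (recalling that \( \LFM{\bp{\ConditionalInfimum{\normalized}{\varphi\circ\lzero}}}=\SFM{\np{\varphi\circ\lzero}}{\CouplingCapra} \) by~\eqref{eq:CAPRA_Fenchel-Moreau_conjugate}), and then to rearrange the resulting equality case by case, keeping careful track of the Moreau lower and upper additions.

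\medskip

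\noindent\textbf{The case \( \primal=0 \).} Since \( \np{\varphi\circ\lzero}\np{0}=\varphi\np{0} \) and, by the convention \( \CoordinateNormDual{\TripleNorm{\cdot}}{0}=0 \), the index \( \LocalIndex=0 \) contributes precisely the value \( -\varphi\np{0} \) to the supremum, the defining equality \( \SFM{\np{\varphi\circ\lzero}}{\CouplingCapra}\np{\dual}=-\varphi\np{0} \) in~\eqref{eq:Capra-subdifferential_zero} is equivalent to the family of inequalities \( \CoordinateNormDual{\TripleNorm{\dual}}{\LocalIndex}\LowPlus\np{-\varphi\np{\LocalIndex}}\leq-\varphi\np{0} \) for \( \LocalIndex\in\ic{1,d} \). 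A short inspection of the Moreau arithmetic (distinguishing whether \( \varphi\np{\LocalIndex} \) and \( \varphi\np{0} \) are \( -\infty \), finite, or \( +\infty \)) rewrites each such inequality as \( \CoordinateNormDual{\TripleNorm{\dual}}{\LocalIndex}\leq\varphi\np{\LocalIndex}\UppPlus\np{-\varphi\np{0}} \), that is, since \( \CoordinateNormDual{\TripleNormBall}{\LocalIndex} \) is the unit ball of \( \CoordinateNormDual{\TripleNorm{\cdot}}{\LocalIndex} \) and with the stated conventions \( \lambda\CoordinateNormDual{\TripleNormBall}{\LocalIndex}=\emptyset \) for \( \lambda<0 \) and \( +\infty\CoordinateNormDual{\TripleNormBall}{\LocalIndex}=\RR^d \), as \( \dual\in\bc{\varphi\np{\LocalIndex}\UppPlus\np{-\varphi\np{0}}}\CoordinateNormDual{\TripleNormBall}{\LocalIndex} \). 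Intersecting over \( \LocalIndex\in\ic{1,d} \) yields the announced formula at \( 0 \).

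\medskip

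\noindent\textbf{The case \( \primal\neq0 \).} Set \( l=\lzero\np{\primal}\geq1 \), so that \( \np{\varphi\circ\lzero}\np{\primal}=\varphi\np{l} \) and \( \CouplingCapra\np{\primal,\dual}=\proscal{\primal}{\dual}/\TripleNorm{\primal} \) in~\eqref{eq:Capra-subdifferential_neq_zero}. If \( \varphi\np{l}=-\infty \), both sides of the defining equality equal \( +\infty \) for every \( \dual \), and if \( \varphi\equiv+\infty \) both sides equal \( -\infty \) for every \( \dual \); in either case \( \partial_{\CouplingCapra}\np{\varphi\circ\lzero}\np{\primal}=\RR^d \). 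If \( \varphi\np{l}=+\infty \) but \( \varphi\np{\LocalIndex}\neq+\infty \) for some \( \LocalIndex\in\ic{0,d} \), the right-hand side is \( -\infty \) whereas the supremum on the left-hand side is \( >-\infty \) (the \( \LocalIndex \)-th term alone exceeds \( -\infty \)), so no \( \dual \) qualifies and \( \partial_{\CouplingCapra}\np{\varphi\circ\lzero}\np{\primal}=\emptyset \). There remains the case \( -\infty<\varphi\np{l}<+\infty \), where the defining equality reads \( \sup_{\LocalIndex\in\ic{0,d}}\Bc{\CoordinateNormDual{\TripleNorm{\dual}}{\LocalIndex}\LowPlus\np{-\varphi\np{\LocalIndex}}}=\proscal{\primal}{\dual}/\TripleNorm{\primal}-\varphi\np{l} \). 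The crux is the double inequality, valid for every \( \dual \),
\[
\sup_{\LocalIndex\in\ic{0,d}}\Bc{\CoordinateNormDual{\TripleNorm{\dual}}{\LocalIndex}\LowPlus\np{-\varphi\np{\LocalIndex}}}
\;\geq\;\CoordinateNormDual{\TripleNorm{\dual}}{l}-\varphi\np{l}
\;\geq\;\frac{\proscal{\primal}{\dual}}{\TripleNorm{\primal}}-\varphi\np{l}
\eqfinv
\]
where the first inequality comes from retaining the index \( \LocalIndex=l \), and the second from \( \CoordinateNormDual{\TripleNorm{\dual}}{l}=\sigma_{\LevelSet{\lzero}{l}\cap\TripleNormSphere}\np{\dual} \) (by~\eqref{eq:dual_coordinate_norm}) together with the membership \( \primal/\TripleNorm{\primal}\in\LevelSet{\lzero}{l}\cap\TripleNormSphere \) (as \( \lzero\np{\primal}=l \)). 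Hence the defining equality holds if and only if \emph{both} inequalities are equalities: the equality in the first one means that \( l\in\argmax_{\LocalIndex\in\ic{0,d}}\Bc{\CoordinateNormDual{\TripleNorm{\dual}}{\LocalIndex}-\varphi\np{\LocalIndex}} \), the second condition in~\eqref{eq:pseudonormlzero_subdifferential}; the equality in the second one is \( \CoordinateNormDual{\TripleNorm{\dual}}{l}=\proscal{\primal}{\dual}/\TripleNorm{\primal} \), which I would recast as the normal-cone membership. Indeed, Proposition~\ref{pr:coordinate_norm_graded} (formula~\eqref{eq:coordinate_norm_graded_b}) applied to \( \lzero\np{\primal}\leq l \) gives \( \CoordinateNorm{\TripleNorm{\primal}}{l}=\TripleNorm{\primal} \), so that \( \primal/\CoordinateNorm{\TripleNorm{\primal}}{l}=\primal/\TripleNorm{\primal} \) lies on the unit sphere of the coordinate-\( l \) norm; and for any \( \primal_0 \) on that unit sphere, the definition~\eqref{eq:normal_cone} of the normal cone together with \( \CoordinateNormDual{\TripleNorm{\cdot}}{l}=\sigma_{\CoordinateNorm{\TripleNormBall}{l}} \) (by~\eqref{eq:norm_dual_norm}) shows that \( \dual\in\NORMAL_{\CoordinateNorm{\TripleNormBall}{l}}\np{\primal_0} \) if and only if \( \CoordinateNormDual{\TripleNorm{\dual}}{l}=\proscal{\primal_0}{\dual} \). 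Taking \( \primal_0=\primal/\CoordinateNorm{\TripleNorm{\primal}}{l} \) this is exactly the first condition in~\eqref{eq:pseudonormlzero_subdifferential}, which completes the proof.

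\medskip

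\noindent The main obstacle is expected to be bookkeeping rather than conceptual: carefully propagating the Moreau lower and upper additions through the degenerate combinations of infinite values of \( \varphi \) — in particular verifying that \( \CoordinateNormDual{\TripleNorm{\dual}}{\LocalIndex}\LowPlus\np{-\varphi\np{\LocalIndex}}\leq-\varphi\np{0} \) is faithfully encoded by \( \dual\in\bc{\varphi\np{\LocalIndex}\UppPlus\np{-\varphi\np{0}}}\CoordinateNormDual{\TripleNormBall}{\LocalIndex} \) under all the sign conventions — together with invoking the elementary characterization of the normal cone to the unit ball of a norm.
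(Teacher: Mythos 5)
Your proof is correct and follows essentially the same route as the paper's: unfold the definition of the \Capra-subdifferential, substitute the conjugate formula from Proposition~\ref{pr:pseudonormlzero_conjugate_varphi}, split on \( \primal=0 \) versus \( \primal\neq0 \), force equality in a chain of inequalities, and convert the resulting equality \( \CoordinateNormDual{\TripleNorm{\dual}}{l}=\proscal{\primal}{\dual}/\TripleNorm{\primal} \) into normal-cone membership via \( \CoordinateNorm{\TripleNorm{\primal}}{l}=\TripleNorm{\primal} \) from~\eqref{eq:coordinate_norm_graded_b}. The one genuine (and welcome) streamlining is in how you get the inequality \( \proscal{\primal}{\dual}/\TripleNorm{\primal}\leq\CoordinateNormDual{\TripleNorm{\dual}}{l} \): you read it off directly from the support-function formula~\eqref{eq:dual_coordinate_norm} and the membership \( \primal/\TripleNorm{\primal}\in\LevelSet{\lzero}{l}\cap\TripleNormSphere \), whereas the paper routes it through the restriction norm \( \TripleNorm{\dual_L}_{L,\star} \) on the support~\( L \) of~\( \primal \), which forces a four-term chain of equalities and a separate ``insert'' argument to show that the extra condition \( \TripleNorm{\dual_L}_{L,\star}=\CoordinateNormDual{\TripleNorm{\dual}}{l} \) is automatically satisfied; your version avoids that detour entirely.
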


\begin{proof}
  We have
  \begin{align*}
    \dual \in 
    \partial_{\CouplingCapra}\np{ \varphi \circ \lzero}\np{\primal} 
    &\iff \SFM{ \np{ \varphi \circ \lzero} }{\CouplingCapra}\np{\dual} 
      = \CouplingCapra\np{\primal, \dual} 
      \LowPlus \bp{ -\np{ \varphi \circ \lzero}\np{\primal} }
      \intertext{by definition~\eqref{eq:Capra-subdifferential_b} of the 
      \Capra-subdifferential }
    &\iff 
      \sup_{\LocalIndex\in\ic{0,d}} \Bc{ \CoordinateNormDual{\TripleNorm{\dual}}{\LocalIndex} -\varphi\np{\LocalIndex} }  
      = \CouplingCapra\np{\primal, \dual} 
      \LowPlus \bp{ -\np{ \varphi \circ \lzero}\np{\primal} }
      \tag{as 
      \( \SFM{ \np{ \varphi \circ \lzero} }{\CouplingCapra}\np{\dual} 
      = \sup_{\LocalIndex\in\ic{0,d}} \Bc{ \CoordinateNormDual{\TripleNorm{\dual}}{\LocalIndex} -\varphi\np{\LocalIndex} } \) 
      by~\eqref{eq:conjugate_l0norm_varphi}
      }
    \\
    &\iff 
      \Bp{ \primal=0 \mtext{ and } 
      \sup_{\LocalIndex\in\ic{0,d}} \Bc{ \CoordinateNormDual{\TripleNorm{\dual}}{\LocalIndex} -\varphi\np{\LocalIndex} } =-\varphi\np{0} }
    \\
    & \mtext{ or } 
      \Bp{ \primal \neq 0 \mtext{ and } 
      \sup_{\LocalIndex\in\ic{0,d}} \Bc{ \CoordinateNormDual{\TripleNorm{\dual}}{\LocalIndex} -\varphi\np{\LocalIndex} }
      = \frac{ \proscal{\primal}{\dual} }{ \TripleNorm{\primal} } 
      - \varphi\bp{\lzero\np{\primal}} 
      }
      \tag{by definition~\eqref{eq:coupling_CAPRA} of \( \CouplingCapra\np{\primal, \dual} \)}
  \end{align*}
  \begin{subequations}
    Therefore, on the one hand, we obtain that
    \begin{align*}
      \dual \in \partial_{\CouplingCapra}\np{ \varphi \circ \lzero}\np{0} 
      &\iff 
        \CoordinateNormDual{\TripleNorm{\dual}}{\LocalIndex}
        -\varphi\np{\LocalIndex} \leq -\varphi\np{0} \eqsepv
        \forall \LocalIndex\in\ic{1,d}
        \tag{as \( \CoordinateNormDual{\TripleNorm{\dual}}{0}=0 \) 
        by convention} 
      \\
      &\iff 
        \CoordinateNormDual{\TripleNorm{\dual}}{\LocalIndex}
        \leq \varphi\np{\LocalIndex} \UppPlus \bp{-\varphi\np{0} } \eqsepv
        \forall \LocalIndex\in\ic{1,d} 
        \intertext{by property of the Moreau upper addition \cite{Moreau:1970}} 
      &\iff 
        \dual \in \bigcap_{ \LocalIndex\in\ic{1,d} } 
        \bc{ \varphi\np{\LocalIndex} \UppPlus \bp{-\varphi\np{0} } } 
        \CoordinateNormDual{\TripleNormBall}{\LocalIndex} 
        \eqfinv
    \end{align*}
    where, by convention 
    \( \lambda \CoordinateNormDual{\TripleNormBall}{\LocalIndex} =\emptyset \),
    for any \( \lambda \in [-\infty,0[ \), and 
    \( +\infty\CoordinateNormDual{\TripleNormBall}{\LocalIndex} =\RR^d \).

    On the other hand, when \( \primal \neq 0 \), we get
    \begin{equation}
      \dual \in \partial_{\CouplingCapra}\np{ \varphi \circ \lzero}\np{\primal} 
      \iff 
       \sup_{\LocalIndex\in\ic{0,d}} \bc{ \CoordinateNormDual{\TripleNorm{\dual}}{\LocalIndex}-\varphi\np{\LocalIndex} }=
      \frac{ \proscal{\primal}{\dual} }{ \TripleNorm{\primal} } 
      - \varphi\bp{\lzero\np{\primal}} 
      \eqfinp
      \label{eq:pseudonormlzero_subdifferential_proof_neq_0}
    \end{equation}
  \end{subequations}
  We now establish necessary and sufficient conditions for \( \dual \) to belong to
  \( \partial_{\CouplingCapra}\np{ \varphi \circ \lzero}\np{\primal} \) when \( \primal \neq 0 \).
  We consider \( \primal \in \RR^d\backslash\{0\} \), and we denote
  \( L = \Support{\primal} \) 
  and \( l=\cardinal{L}=\lzero\np{\primal} \).
  We have
  \begin{align*}
    \dual
    &\in 
      \partial_{\CouplingCapra}\np{ \varphi \circ \lzero}\np{\primal} 
     \\
    &\iff
      \sup_{\LocalIndex\in\ic{0,d}} \bc{ \CoordinateNormDual{\TripleNorm{\dual}}{\LocalIndex}-\varphi\np{\LocalIndex} }=
      \frac{ \proscal{\primal}{\dual} }{ \TripleNorm{\primal} } -\varphi\np{l}
      \tag{by~\eqref{eq:pseudonormlzero_subdifferential_proof_neq_0} with \( \lzero\np{\primal}=l \)}
    \\
    &\iff 
      \CoordinateNormDual{\TripleNorm{\dual}}{l} -\varphi\np{l} \leq
      \sup_{\LocalIndex\in\ic{0,d}} \bc{ \CoordinateNormDual{\TripleNorm{\dual}}{\LocalIndex}-\varphi\np{\LocalIndex} }=
      \frac{ \proscal{\primal}{\dual} }{ \TripleNorm{\primal} } -\varphi\np{l} 
    \\
    &\iff 
      \TripleNorm{\dual_L}_{L,\star} -\varphi\np{l} \leq 
      \CoordinateNormDual{\TripleNorm{\dual}}{l} -\varphi\np{l} \leq
      \sup_{\LocalIndex\in\ic{0,d}} \bc{ \CoordinateNormDual{\TripleNorm{\dual}}{\LocalIndex}-\varphi\np{\LocalIndex} }=
      \frac{ \proscal{\primal}{\dual} }{ \TripleNorm{\primal} } -\varphi\np{l} 
      \intertext{ as \( \TripleNorm{\dual_L}_{L,\star} \leq
      \CoordinateNormDual{\TripleNorm{\dual}}{l} \) 
      by expression~\eqref{eq:dual_coordinate_norm} of the dual coordinate-$k$ norm
      \( \CoordinateNormDual{\TripleNorm{\dual}}{l} \),
      and because \( l=\cardinal{L} \) }
    &\iff 
      \TripleNorm{\dual_L}_{L,\star} -\varphi\np{l} \leq 
      \CoordinateNormDual{\TripleNorm{\dual}}{l} -\varphi\np{l} \leq
      \sup_{\LocalIndex\in\ic{0,d}} \bc{ \CoordinateNormDual{\TripleNorm{\dual}}{\LocalIndex}-\varphi\np{\LocalIndex} }=
      \frac{ \proscal{\primal}{\dual} }{ \TripleNorm{\primal} } -\varphi\np{l} \leq
      \TripleNorm{\dual_L}_{L,\star} -\varphi\np{l} 
      \tag{as we have \( \frac{ \proscal{\primal}{\dual} }{ \TripleNorm{\primal} } 
      = \frac{ \proscal{\primal_L}{\dual_L} }{ \TripleNorm{\primal_L} } 
      \leq \TripleNorm{\dual_L}_{L,\star} \) since \( \primal=\primal_L\) and 
      by~\eqref{eq:norm_dual_norm_inequality} } 
   \\
    &\iff 
      \TripleNorm{\dual_L}_{L,\star} -\varphi\np{l} = 
      \CoordinateNormDual{\TripleNorm{\dual}}{l} -\varphi\np{l} =
      \sup_{\LocalIndex\in\ic{0,d}} \bc{ \CoordinateNormDual{\TripleNorm{\dual}}{\LocalIndex}-\varphi\np{\LocalIndex} }=
      \frac{ \proscal{\primal}{\dual} }{ \TripleNorm{\primal} } -\varphi\np{l} 
      \intertext{as all terms in the inequalities are necessarily equal }
    &\iff
      \begin{cases}
        \text{either } \varphi\np{l}=-\infty 
        \\
        \text{or } \bp{ \varphi\np{l}=+\infty \text{ and }
          \varphi\np{\LocalIndex}=+\infty \eqsepv \forall \LocalIndex\in\ic{0,d} }
        \\[2mm]
        \text{or }
        \Big( -\infty < \varphi\np{l} < +\infty \mtext{ and } 
        \\ \qquad 
        \TripleNorm{\dual_L}_{L,\star} = 
        \CoordinateNormDual{\TripleNorm{\dual}}{l} =
        \frac{ \proscal{\primal}{\dual} }{ \TripleNorm{\primal} }
        \mtext{ and } \CoordinateNormDual{\TripleNorm{\dual}}{l}-\varphi\np{l} = 
        \sup_{\LocalIndex\in\ic{0,d}} \bc{ \CoordinateNormDual{\TripleNorm{\dual}}{\LocalIndex}-\varphi\np{\LocalIndex} }
        \Big) \eqfinp
      \end{cases}
  \end{align*}
  Let us make a brief insert and notice that 
  \begin{align*}
    \primal=\primal_L &\eqsepv \lzero\np{\primal}=l=\cardinal{L}>1 \eqsepv
                        \proscal{\primal}{\dual} =
                        \TripleNorm{\primal} \times
                        \CoordinateNormDual{\TripleNorm{\dual}}{l} 
    \\
    \implies & \quad
                  \lzero\np{\primal}=l=\cardinal{L}>1 \eqsepv
                  \proscal{\primal_L}{\dual_L} =
                  \TripleNorm{\primal_L} \times
                  \CoordinateNormDual{\TripleNorm{\dual}}{l} 
    \\
    \implies & \quad
                  \lzero\np{\primal}=l=\cardinal{L}>1 \eqsepv
                  \TripleNorm{\primal_L} \times
                  \CoordinateNormDual{\TripleNorm{\dual}}{l} 
                  \leq
                  \TripleNorm{\primal_L} \times
                  \TripleNorm{\dual_L}_{L,\star} 
                  \tag{by~\eqref{eq:norm_dual_norm_inequality}}
    \\
    \implies & \quad
                  l=\cardinal{L} \eqsepv
                  \CoordinateNormDual{\TripleNorm{\dual}}{l} 
                  \leq
                  \TripleNorm{\dual_L}_{L,\star} 
    \\
    \implies & \quad
                  \CoordinateNormDual{\TripleNorm{\dual}}{l} 
                  = \TripleNorm{\dual_L}_{L,\star}
  \end{align*}              
  as \( \TripleNorm{\dual_L}_{L,\star} \leq
  \CoordinateNormDual{\TripleNorm{\dual}}{l} \) 
  by expression~\eqref{eq:dual_coordinate_norm} of the dual coordinate-$k$ norm
  \( \CoordinateNormDual{\TripleNorm{\dual}}{l} \),
  and because \( l=\cardinal{L} \).

  Now, let us go back to the equivalences regarding
  \(  \dual \in \partial_{\CouplingCapra}
  \np{ \varphi \circ \lzero}\np{\primal} \).
  Focusing on the case where \( -\infty < \varphi\np{l} < +\infty \), 
  we have 
  \begin{align*}
    \dual \in \partial_{\CouplingCapra}
    &\np{ \varphi \circ \lzero}\np{\primal}
      \Leftrightarrow 
      \TripleNorm{\dual_L}_{L,\star} = 
      \CoordinateNormDual{\TripleNorm{\dual}}{l} =
      \frac{ \proscal{\primal}{\dual} }{ \TripleNorm{\primal} }
      \mtext{ and } 
      l \in \argmax_{\LocalIndex\in\ic{0,d}} \bc{ \CoordinateNormDual{\TripleNorm{\dual}}{\LocalIndex}-\varphi\np{\LocalIndex} }
    \\ 
    &\Leftrightarrow 
      \TripleNorm{\dual_L}_{L,\star} = 
      \CoordinateNormDual{\TripleNorm{\dual}}{l} 
      \mtext{ and } 
      \proscal{\primal}{\dual} =
      \TripleNorm{\primal} \times
      \CoordinateNormDual{\TripleNorm{\dual}}{l} 
      \mtext{ and } 
      l \in \argmax_{\LocalIndex\in\ic{0,d}} \bc{ \CoordinateNormDual{\TripleNorm{\dual}}{\LocalIndex}-\varphi\np{\LocalIndex} }
    \\ 
    &\Leftrightarrow 
      \proscal{\primal}{\dual} =
      \TripleNorm{\primal} \times
      \CoordinateNormDual{\TripleNorm{\dual}}{l} 
      \mtext{ and } 
      l \in \argmax_{\LocalIndex\in\ic{0,d}} \bc{ \CoordinateNormDual{\TripleNorm{\dual}}{\LocalIndex}-\varphi\np{\LocalIndex} }
      \intertext{as just established in the insert}
    &\Leftrightarrow 
      \proscal{\primal}{\dual} =
      \CoordinateNorm{\TripleNorm{\primal}}{l} \times
      \CoordinateNormDual{\TripleNorm{\dual}}{l} 
      \mtext{ and } 
      l \in \argmax_{\LocalIndex\in\ic{0,d}} \bc{ \CoordinateNormDual{\TripleNorm{\dual}}{\LocalIndex}-\varphi\np{\LocalIndex} }
      \tag{as \( \lzero\np{\primal}=l \implies 
      \TripleNorm{\primal}=
      \CoordinateNorm{\TripleNorm{\primal}}{l} \) by~\eqref{eq:coordinate_norm_graded_b}}
    \\
    &\Leftrightarrow 
      \dual \in \NORMAL_{ \CoordinateNorm{\TripleNormBall}{l} }
      \np{ \frac{ \primal }{ \CoordinateNorm{\TripleNorm{\primal}}{l} } }
      \mtext{ and } 
      l \in \argmax_{\LocalIndex\in\ic{0,d}} \bc{ \CoordinateNormDual{\TripleNorm{\dual}}{\LocalIndex}-\varphi\np{\LocalIndex} }
      %
  \end{align*}
  by the equivalence
  \( \proscal{\primal}{\dual} =
      \CoordinateNorm{\TripleNorm{\primal}}{l} \times
      \CoordinateNormDual{\TripleNorm{\dual}}{l} \iff
      \dual \in \NORMAL_{ \CoordinateNorm{\TripleNormBall}{l} }
      \np{ \frac{ \primal }{ \CoordinateNorm{\TripleNorm{\primal}}{l} } } \).
      \medskip
      
  This ends the proof. 
\end{proof}

With \( \varphi \) the identity function on~\( \ic{0,d} \),
we find the \Capra-subdifferential of the \lzeropseudonorm.
With the functions
\( \varphi=\delta_{\ic{0,k}} \) (for any $k \in \ic{0,d} $),
we find the \Capra-subdifferentials of the
characteristic functions \( \delta_{ \LevelSet{\lzero}{k} } \)
of its level sets~\eqref{eq:pseudonormlzero_level_set}.
The corresponding expressions are given in Table~\ref{tab:results_conjugacy}.

\section{Norm ratio lower bounds for the $l_0$ pseudonorm}
\label{Norm_ratio_lower_bounds_for_the_l0_pseudonorm}

As an application, we provide a new family of lower bounds for 
the \lzeropseudonorm, as a fraction between two norms,
the denominator being any norm.

\begin{proposition}
  Let $\TripleNorm{\cdot}$ be a norm on~$\RR^d$,
  with associated sequence of dual coordinate-$k$ norms, as 
  in Definition~\ref{de:coordinate_norm}. 
  \begin{subequations}
    For any function \( \varphi : \ic{0,d} \to [0,+\infty[ \), 
    such that \( \varphi\np{\LocalIndex} > \varphi\np{0}=0 \) for all \( \LocalIndex\in\ic{1,d} \), 
    there exists a norm \( \CoordinateNorm{\TripleNorm{\cdot}}{\varphi} \)
    characterized 
    \begin{itemize}
    \item 
      either by its dual norm
      \( \CoordinateNormDual{\TripleNorm{\cdot}}{\varphi} \), which has unit ball
      \( \bigcap_{ \LocalIndex\in\ic{1,d} } \varphi\np{\LocalIndex} \CoordinateNormDual{\TripleNormBall}{\LocalIndex} \),
      that is,
      \begin{align}
          \CoordinateNormDual{\TripleNormBall}{\varphi}
        &=
        \bigcap_{ \LocalIndex\in\ic{1,d} } \varphi\np{\LocalIndex} \CoordinateNormDual{\TripleNormBall}{\LocalIndex}
        \mtext{ and }
        \CoordinateNorm{\TripleNorm{\cdot}}{\varphi} 
        =
        \sigma_{ \CoordinateNormDual{\TripleNormBall}{\varphi} }
        \eqfinv
        \label{eq:coordinate_norm_varphi}
   \\
\text{or, equivalently,} \qquad   
        \CoordinateNormDual{\TripleNorm{\dual}}{\varphi}
        &=
        \sup_{ \LocalIndex\in\ic{1,d} }
        \frac{\CoordinateNormDual{\TripleNorm{\dual}}{\LocalIndex}}%
        { \varphi\np{\LocalIndex} }
        \eqsepv \forall \dual \in \RR^d 
        \eqfinv
      \end{align}

    \item 
      or by the inf-convolution
      \begin{align}
        \CoordinateNorm{\TripleNorm{\cdot}}{\varphi} 
        &=  \bigbox_{ \LocalIndex\in\ic{1,d} }
        \Bp{ \varphi\np{\LocalIndex} \CoordinateNorm{\TripleNorm{\cdot}}{\LocalIndex} }
        \eqfinv
  \\
\text{that is,} \qquad
        \CoordinateNorm{\TripleNorm{\primal}}{\varphi} 
        &=
        \inf_{ \substack{%
            z^{(1)} \in \RR^d, \ldots, z^{(d)} \in \RR^d
            \\
            \sum_{ \LocalIndex=1 }^{ d } z^{(\LocalIndex)} = \primal  } }
        \sum_{ \LocalIndex=1 }^{ d }  \varphi\np{\LocalIndex} 
        \CoordinateNorm{\TripleNorm{z^{(\LocalIndex)}}}{\LocalIndex} 
        \eqsepv \forall \primal \in \RR^d 
        \eqfinp
        \label{eq:coordinate_norm_varphi_inf-convolution}
        \end{align}
      %
    \end{itemize}
  \end{subequations} 
Then, we have the inequalities
  \begin{equation}
    \frac{ \CoordinateNorm{\TripleNorm{\primal}}{\varphi} }{ \TripleNorm{\primal} }
    \leq 
    \frac{ 1 }{ \TripleNorm{\primal} } 
    \min_{ \substack{%
        z^{(1)} \in \RR^d, \ldots, z^{(d)} \in \RR^d 
        \\
        \sum_{ \LocalIndex=1 }^{ d } \CoordinateNorm{\TripleNorm{z^{(\LocalIndex)}}}{\LocalIndex} \leq \TripleNorm{\primal}
        \\
        \sum_{ \LocalIndex=1 }^{ d } z^{(\LocalIndex)} = \primal  } }
    \sum_{ \LocalIndex=1 }^{ d } \varphi\np{\LocalIndex} 
    \CoordinateNorm{\TripleNorm{z^{(\LocalIndex)}}}{\LocalIndex} 
    \leq 
    \varphi\bp{ \lzero\np{\primal} }
    \eqsepv \forall \primal \in \RR^d\backslash\{0\}
    \eqfinp
    \label{eq:coordinate_norm_varphi_inequality_for_lzero}
  \end{equation}
  \label{pr:coordinate_norm_varphi}
\end{proposition}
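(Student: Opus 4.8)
The plan is to proceed in three stages: first check that formula~\eqref{eq:coordinate_norm_varphi} genuinely defines a norm; second, identify that norm with the inf-convolution in~\eqref{eq:coordinate_norm_varphi_inf-convolution}; third, derive the two inequalities in~\eqref{eq:coordinate_norm_varphi_inequality_for_lzero}.

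For the existence of the norm I would argue as follows. Since $\varphi\np{\LocalIndex}>0$ for every $\LocalIndex\in\ic{1,d}$ by assumption, each set $\varphi\np{\LocalIndex}\CoordinateNormDual{\TripleNormBall}{\LocalIndex}$ is a closed, bounded, symmetric, convex neighbourhood of the origin in~$\RR^d$; hence so is the finite intersection $\CoordinateNormDual{\TripleNormBall}{\varphi}=\bigcap_{\LocalIndex\in\ic{1,d}}\varphi\np{\LocalIndex}\CoordinateNormDual{\TripleNormBall}{\LocalIndex}$, which is therefore the unit ball of a norm. Setting $\CoordinateNorm{\TripleNorm{\cdot}}{\varphi}=\sigma_{\CoordinateNormDual{\TripleNormBall}{\varphi}}$, I would use~\eqref{eq:norm_dual_norm} to see that its dual norm $\CoordinateNormDual{\TripleNorm{\cdot}}{\varphi}$ has unit ball exactly $\CoordinateNormDual{\TripleNormBall}{\varphi}$; and since $\dual\in\varphi\np{\LocalIndex}\CoordinateNormDual{\TripleNormBall}{\LocalIndex}\iff\CoordinateNormDual{\TripleNorm{\dual}}{\LocalIndex}\leq\varphi\np{\LocalIndex}$, the gauge of $\CoordinateNormDual{\TripleNormBall}{\varphi}$ at $\dual$ is $\sup_{\LocalIndex\in\ic{1,d}}\CoordinateNormDual{\TripleNorm{\dual}}{\LocalIndex}/\varphi\np{\LocalIndex}$, which is the stated expression for $\CoordinateNormDual{\TripleNorm{\cdot}}{\varphi}$.

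To identify $\CoordinateNorm{\TripleNorm{\cdot}}{\varphi}$ with the inf-convolution, I would observe that the Fenchel conjugate of the norm $\varphi\np{\LocalIndex}\CoordinateNorm{\TripleNorm{\cdot}}{\LocalIndex}$ is the characteristic function of its dual unit ball $\varphi\np{\LocalIndex}\CoordinateNormDual{\TripleNormBall}{\LocalIndex}$; hence, the conjugate of an inf-convolution being the sum of the conjugates, the Fenchel conjugate of $\bigbox_{\LocalIndex\in\ic{1,d}}\bp{\varphi\np{\LocalIndex}\CoordinateNorm{\TripleNorm{\cdot}}{\LocalIndex}}$ is $\sum_{\LocalIndex\in\ic{1,d}}\delta_{\varphi\np{\LocalIndex}\CoordinateNormDual{\TripleNormBall}{\LocalIndex}}=\delta_{\CoordinateNormDual{\TripleNormBall}{\varphi}}$, whose conjugate in turn is $\sigma_{\CoordinateNormDual{\TripleNormBall}{\varphi}}=\CoordinateNorm{\TripleNorm{\cdot}}{\varphi}$. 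Since the inf-convolution of the finitely many finite-valued coercive norms $\varphi\np{\LocalIndex}\CoordinateNorm{\TripleNorm{\cdot}}{\LocalIndex}$ is itself a norm — namely the gauge of the compact convex symmetric body $\closedconvexhull\bp{\bigcup_{\LocalIndex\in\ic{1,d}}\varphi\np{\LocalIndex}^{-1}\CoordinateNorm{\TripleNormBall}{\LocalIndex}}$, hence convex and lower semicontinuous — it equals its Fenchel biconjugate, that is, $\CoordinateNorm{\TripleNorm{\cdot}}{\varphi}$; unfolding the definition of the inf-convolution then gives~\eqref{eq:coordinate_norm_varphi_inf-convolution}, the infimum being attained by compactness.

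Finally, for the inequalities I would fix $\primal\in\RR^d\backslash\{0\}$, write $l=\lzero\np{\primal}\geq1$, and let $M\np{\primal}$ denote the middle quantity of~\eqref{eq:coordinate_norm_varphi_inequality_for_lzero} without the prefactor $1/\TripleNorm{\primal}$. The right inequality follows from the single-term decomposition $z^{(l)}=\primal$, $z^{(\LocalIndex)}=0$ for $\LocalIndex\neq l$: it is feasible because $\sum_{\LocalIndex=1}^{d}\CoordinateNorm{\TripleNorm{z^{(\LocalIndex)}}}{\LocalIndex}=\CoordinateNorm{\TripleNorm{\primal}}{l}=\TripleNorm{\primal}$ by the graded property~\eqref{eq:coordinate_norm_graded_b} applied with $\lzero\np{\primal}\leq l$, so that $M\np{\primal}\leq\varphi\np{l}\CoordinateNorm{\TripleNorm{\primal}}{l}=\varphi\bp{\lzero\np{\primal}}\TripleNorm{\primal}$; equivalently, this is exactly~\eqref{eq:biconjugate_l0norm_varphi} combined with the Fenchel--Moreau inequality~\eqref{eq:galois-cor}. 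The left inequality follows because, by~\eqref{eq:coordinate_norm_varphi_inf-convolution}, $\CoordinateNorm{\TripleNorm{\primal}}{\varphi}$ is the infimum of $\sum_{\LocalIndex=1}^{d}\varphi\np{\LocalIndex}\CoordinateNorm{\TripleNorm{z^{(\LocalIndex)}}}{\LocalIndex}$ over \emph{all} decompositions $\sum_{\LocalIndex=1}^{d}z^{(\LocalIndex)}=\primal$, whereas $M\np{\primal}$ minimises the same objective over the smaller (nonempty) feasible set that also imposes $\sum_{\LocalIndex=1}^{d}\CoordinateNorm{\TripleNorm{z^{(\LocalIndex)}}}{\LocalIndex}\leq\TripleNorm{\primal}$; minimising over a smaller set cannot decrease the value, so $\CoordinateNorm{\TripleNorm{\primal}}{\varphi}\leq M\np{\primal}$. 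Dividing all three terms by $\TripleNorm{\primal}>0$ concludes. The step most in need of care is the identification of the two descriptions of $\CoordinateNorm{\TripleNorm{\cdot}}{\varphi}$, where one must be sure the inf-convolution of the rescaled coordinate-$k$ norms is a closed convex function — which is automatic here by finiteness — so that it coincides with the Fenchel biconjugate $\CoordinateNorm{\TripleNorm{\cdot}}{\varphi}$; the inequalities themselves are then essentially bookkeeping.
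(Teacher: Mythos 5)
Your proof is correct and has the same overall architecture as the paper's (polar/gauge description of the dual norm, Fenchel-conjugacy identification of the inf-convolution, then the two inequalities), but two steps are carried out by different means, both legitimately. For the identification \( \CoordinateNorm{\TripleNorm{\cdot}}{\varphi} = \bigbox_{ \LocalIndex\in\ic{1,d} }\bp{ \varphi\np{\LocalIndex} \CoordinateNorm{\TripleNorm{\cdot}}{\LocalIndex} } \), the paper starts from \( \delta_{ \CoordinateNormDual{\TripleNormBall}{\varphi} } = \sum_{\LocalIndex} \delta_{ \varphi\np{\LocalIndex} \CoordinateNormDual{\TripleNormBall}{\LocalIndex} } \) and applies the rule that the conjugate of a sum is the inf-convolution of the conjugates, which needs the qualification that the intersection of the domains contain a neighbourhood of~$0$ (checked there using \( \varphi\np{\LocalIndex}>0 \) and \cite[Propositions~15.3 and~15.5]{Bauschke-Combettes:2017}); you go in the reverse direction, using the unconditional rule that the conjugate of an inf-convolution is the sum of the conjugates, and close the loop by arguing that the inf-convolution of finitely many norms is itself closed convex, hence equal to its biconjugate \( \sigma_{ \CoordinateNormDual{\TripleNormBall}{\varphi} } \). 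This trades a qualification condition for a lower-semicontinuity check; your justification (exactness and lower semicontinuity via compactness of the feasible set, or identification with the gauge of \( \closedconvexhull\bp{ \bigcup_{\LocalIndex} \varphi\np{\LocalIndex}^{-1} \CoordinateNorm{\TripleNormBall}{\LocalIndex} } \)) is sound, but you should rely on one of those arguments explicitly rather than call the closedness ``automatic by finiteness'', since finiteness of values alone does not give lower semicontinuity. For the right-hand inequality in~\eqref{eq:coordinate_norm_varphi_inequality_for_lzero}, the paper invokes the Capra-biconjugate formula~\eqref{eq:biconjugate_l0norm_varphi} together with~\eqref{eq:galois-cor}, whereas your single-block decomposition \( z^{(l)}=\primal \), \( l=\lzero\np{\primal} \), feasible because \( \CoordinateNorm{\TripleNorm{\primal}}{l}=\TripleNorm{\primal} \) by~\eqref{eq:coordinate_norm_graded_b}, is more elementary and bypasses Proposition~\ref{pr:pseudonormlzero_biconjugate_varphi} entirely; the left-hand inequality by constraint relaxation coincides with the paper's argument.
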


\begin{proof}
  \quad
  
  \noindent $\bullet$ 
  It is easily seen that 
  \( \sigma_{ \CoordinateNormDual{\TripleNormBall}{\varphi} } \) in~\eqref{eq:coordinate_norm_varphi}
  defines a norm, and that, for all \( \dual \in \RR^d \),
  \[
    \CoordinateNormDual{\TripleNorm{\dual}}{\varphi}
    = \inf \bset{ \lambda \geq 0 }{ \dual \in \lambda 
      \bigcap_{ \LocalIndex=1}^{d} \varphi\np{\LocalIndex}
      \CoordinateNormDual{\TripleNormBall}{\LocalIndex} }
    = \inf \bset{ \lambda  \geq 0 }{
      \frac{\CoordinateNormDual{\TripleNorm{\dual}}{\LocalIndex}}%
      { \varphi\np{\LocalIndex} } \leq \lambda }
    =
    \sup_{ \LocalIndex\in\ic{1,d} }
    \frac{\CoordinateNormDual{\TripleNorm{\dual}}{\LocalIndex}}%
    { \varphi\np{\LocalIndex} }
    \eqfinp
  \]

  \medskip

  \noindent $\bullet$ 
  We have 
    \begin{align*}
  \CoordinateNorm{\TripleNorm{\cdot}}{\varphi} 
    &=
      \sigma_{ \CoordinateNormDual{\TripleNormBall}{\varphi} } 
      \tag{by~\eqref{eq:coordinate_norm_varphi}}
    \\
    &=
      \LFM{ \delta_{ \CoordinateNormDual{\TripleNormBall}{\varphi} } }
      \tag{because \( \CoordinateNormDual{\TripleNormBall}{\varphi} \) is closed and convex}
    \\
    &=
      \LFM{ \bp{ 
      \sum_{ \LocalIndex\in\ic{1,d} } \delta_{ \varphi\np{\LocalIndex} \CoordinateNormDual{\TripleNormBall}{\LocalIndex} } } }
      \intertext{by~\eqref{eq:coordinate_norm_varphi} and by expressing the characteristic function of an
      intersection of sets as a sum}
    &=
      \bigbox_{ \LocalIndex\in\ic{1,d} } \LFM{ \delta_{ \varphi\np{\LocalIndex}  \CoordinateNormDual{\TripleNormBall}{\LocalIndex} } }
      \intertext{using~\cite[Proposition~15.3 and (v) in Proposition-15.5]{Bauschke-Combettes:2017}
      because the intersection \( \CoordinateNormDual{\TripleNormBall}{\varphi}
      =
      \bigcap_{ \LocalIndex=1}^{d} \varphi\np{\LocalIndex} 
      \CoordinateNormDual{\TripleNormBall}{\LocalIndex} \)
      of all the domains of the functions
      \( \delta_{ \varphi\np{\LocalIndex}  \CoordinateNormDual{\TripleNormBall}{\LocalIndex} } \)
      contain a neighborhood of~$0$ since \( \varphi\np{\LocalIndex} >0 \)
      for all \( \LocalIndex\in\ic{1,d} \)}
    &=
      \bigbox_{ \LocalIndex\in\ic{1,d} } \sigma_{ \varphi\np{\LocalIndex}
      \CoordinateNormDual{\TripleNormBall}{\LocalIndex} }
      \tag{as \( \LFM{ \delta_{ \varphi\np{\LocalIndex}
      \CoordinateNormDual{\TripleNormBall}{\LocalIndex} } }
      = \sigma_{ \varphi\np{\LocalIndex}
      \CoordinateNormDual{\TripleNormBall}{\LocalIndex} } \),
      for all \( \LocalIndex\in\ic{1,d} \) }
    \\
    &=
      \bigbox_{ \LocalIndex\in\ic{1,d} } \varphi\np{\LocalIndex} \CoordinateNorm{\TripleNorm{\cdot}}{\LocalIndex} 
      \tag{by~\eqref{eq:norm_dual_norm}}
  \end{align*}
  \medskip

  \noindent $\bullet$ 
  We consider the coupling $\CouplingCapra$ in~\eqref{eq:coupling_CAPRA}. 

  By~\eqref{eq:biconjugate_l0norm_varphi} --- because
  the function \( \varphi : \ic{0,d} \to [0,+\infty[ \)
  satisfies the assumption in Item~\ref{it:biconjugate_l0norm_varphi}
  of Proposition~\ref{pr:pseudonormlzero_biconjugate_varphi} ---
  and by the inequality
  \( \SFMbi{\np{ \varphi \circ \lzero } }{\CouplingCapra}
  \leq \varphi \circ \lzero \) obtained from~\eqref{eq:galois-cor},
  we get that
  \begin{equation}
    \frac{ 1 }{ \TripleNorm{\primal} } 
    \min_{ \substack{%
        z^{(1)} \in \RR^d, \ldots, z^{(d)} \in \RR^d 
        \\
        \sum_{ \LocalIndex=1 }^{ d } \CoordinateNorm{\TripleNorm{z^{(\LocalIndex)}}}{\LocalIndex} \leq \TripleNorm{\primal}
        \\
        \sum_{ \LocalIndex=1 }^{ d } z^{(\LocalIndex)} = \primal  } }
    \sum_{ \LocalIndex=1 }^{ d } \LocalIndex 
    \CoordinateNorm{\TripleNorm{z^{(\LocalIndex)}}}{\LocalIndex} 
    \leq 
    \varphi\bp{ \lzero\np{\primal} }
    \eqsepv \forall \primal \in \RR^d\backslash\{0\} 
    \eqfinp
    \label{eq:weaker_Variational_formulation_for_the_pseudo_norm}
  \end{equation}
  Thus, we have obtained the right hand side inequality
  in~\eqref{eq:coordinate_norm_varphi_inequality_for_lzero}.

  By relaxing one constraint in~\eqref{eq:weaker_Variational_formulation_for_the_pseudo_norm},
  we immediately get that 
  \[
    \inf_{ \substack{%
        z^{(1)} \in \RR^d, \ldots, z^{(d)} \in \RR^d
        \\
        \sum_{ \LocalIndex=1 }^{ d } z^{(\LocalIndex)} = \primal  } }
    \sum_{ \LocalIndex=1 }^{ d }  \varphi\np{\LocalIndex} 
    \CoordinateNorm{\TripleNorm{z^{(\LocalIndex)}}}{\LocalIndex} 
    \leq
    \min_{ \substack{%
        z^{(1)} \in \RR^d, \ldots, z^{(d)} \in \RR^d 
        \\
        \sum_{ \LocalIndex=1 }^{ d } \CoordinateNorm{\TripleNorm{z^{(\LocalIndex)}}}{\LocalIndex} \leq \TripleNorm{\primal}
        \\
        \sum_{ \LocalIndex=1 }^{ d } z^{(\LocalIndex)} = \primal  } }
    \sum_{ \LocalIndex=1 }^{ d } \varphi\np{\LocalIndex} 
    \CoordinateNorm{\TripleNorm{z^{(\LocalIndex)}}}{\LocalIndex} 
    \leq 
    \varphi\bp{ \lzero\np{\primal} }
    \eqsepv \forall \primal \in \RR^d
    \eqfinp 
  \]
  Thus, we have obtained the left hand side inequality
  in~\eqref{eq:coordinate_norm_varphi_inequality_for_lzero},
  thanks to~\eqref{eq:coordinate_norm_varphi_inf-convolution}.
\end{proof}

For any function \( \varphi : \ic{0,d} \to [0,+\infty[ \), 
such that \( \varphi\np{\LocalIndex} > \varphi\np{0}=0 \) for all \( \LocalIndex\in\ic{1,d} \), 
using Table~\ref{tab:Examples} when the source norm
$\TripleNorm{\cdot}$ is the $\ell_p$-norm \( \Norm{\cdot}_{p} \),
for \(p \in [1,\infty] \) and \( 1/p + 1/q =1 \),
we denote 
\( \CoordinateNorm{\TripleNorm{\cdot}}{\varphi} \)
by \( \Norm{\cdot}_{p,\varphi}^{\mathrm{sn}} \).
The calculations show that 
\( \Norm{\cdot}_{1,\varphi}^{\mathrm{sn}} = \Norm{\cdot}_{1}\),
and that, when \(p \in ]1,\infty] \), we also have
\( \Norm{\cdot}_{p,\varphi}^{\mathrm{sn}} = \Norm{\cdot}_{1}\),
whatever \(p \in [1,\infty] \),
if we suppose that \( \bp{\varphi\np{\LocalIndex}}^q \geq \LocalIndex \),
for all \( \LocalIndex\in\ic{1,d} \).
As a consequence, when $p=1$, the
inequality~\eqref{eq:coordinate_norm_varphi_inequality_for_lzero}
is trivial.
When \(p \in ]1,\infty] \), if we take the function
\( \varphi\np{\LocalIndex} = \LocalIndex^{1/q}\)
for all \( \LocalIndex\in\ic{1,d} \),
the
inequality~\eqref{eq:coordinate_norm_varphi_inequality_for_lzero}
yields that 
\( \frac{ \Norm{\primal}_{1} }{ \Norm{\primal}_{p} } \leq
\bp{ \lzero\np{\primal} }^{1/q} \),
which is easily obtained directly from the H\"{o}lder inequality.

\begin{table}[htbp]
  \centering
  \begin{tabular}{||l|l||}
    \hline\hline Fenchel conjugacy & \Capra\ conjugacy \\
    \hline\hline
    \( \minusLFM{ \delta_{ \LevelSet{\lzero}{k} } }=\LFM{ \delta_{ \LevelSet{\lzero}{k} } }= \delta_{\{0\}} \) 
                                   &
                                     \(       \SFM{ \delta_{
                                     \LevelSet{\lzero}{k} } }{-\couplingCAPRA}=
\SFM{ \delta_{ \LevelSet{\lzero}{k} } }{\couplingCAPRA}
                                     =        \CoordinateNormDual{\TripleNorm{\cdot}}{k} \)
    \\
    \hline
    \( \LFMbi{ \delta_{ \LevelSet{\lzero}{k} } }= 0 \) 
                                   &
                                     \( \SFMbi{ \delta_{ \LevelSet{\lzero}{k} } }{\CouplingCapra} 
                                     =
                                     \delta_{ \nset{ \primal \in \RR^d }{ \CoordinateNorm{\TripleNorm{\primal}}{k}
                                     =  \TripleNorm{\primal} } } \)
    \\
    \hline
    \( \partial\delta_{ \LevelSet{\lzero}{k} }\np{\primal} =\emptyset \)
                                   &
                                     \( \partial_{\CouplingCapra}\delta_{ \LevelSet{\lzero}{k} }\np{\primal} =
                                     \begin{cases}
                                       \emptyset 
                                       & \text{if } \lzero\np{\primal} = k+1, \ldots, d 
                                       \eqfinv
                                       \\
                                       \NORMAL_{ \CoordinateNorm{\TripleNormBall}{k} }
                                       \np{\frac{ \primal }{ \CoordinateNorm{ \TripleNorm{\primal} }{k} } }
                                       & \text{if } \lzero\np{\primal} =1, \ldots, k 
                                       \eqfinv
                                       \\
                                       \{0\}
                                       & \text{if } \lzero\np{\primal} = 0
                                     \end{cases} \)
    \\
    \( \forall \primal \in \RR^d \)
                                   &  \( \forall \primal \in \RR^d \)
    \\
    \hline\hline 
    \( \LFM{ \lzero }= \delta_{\{0\}} \)
                                   & 
                                     \( \SFM{ \lzero }{\couplingCAPRA}
                                     = \sup_{\LocalIndex\in\ic{0,d}} \bc{ \CoordinateNormDual{\TripleNorm{\cdot}}{\LocalIndex} - \LocalIndex }
                                     \)
    \\
    \hline
    \(\LFMbi{ \lzero } = 0 \)
                                   & 
                                     \( \SFMbi{ \lzero }{\CouplingCapra}\np{\primal}
                                     =
                                     \frac{ 1 }{ \TripleNorm{\primal} } 
                                     \min_{ \substack{%
                                     z^{(1)} \in \RR^d, \ldots, z^{(d)} \in \RR^d 
    \\
    \sum_{ \LocalIndex=1 }^{ d } \CoordinateNorm{\TripleNorm{z^{(\LocalIndex)}}}{\LocalIndex} \leq \TripleNorm{\primal}
    \\
    \sum_{ \LocalIndex=1 }^{ d } z^{(\LocalIndex)} = \primal  } }
    \sum_{ \LocalIndex=1 }^{ d } \LocalIndex
    \CoordinateNorm{\TripleNorm{z^{(\LocalIndex)}}}{\LocalIndex} 
    \eqsepv \forall \primal \in \RR^d\backslash\{0\} \) 
    \\
                                   &  
                                     \( \SFMbi{ \lzero }{\CouplingCapra}\np{0}=0 \)
    \\
    \hline
    \( \partial\lzero\np{0} =\{0\} \) 
                                   &
                                     \( \partial_{\CouplingCapra}\lzero\np{0}
                                     = \bigcap_{ \LocalIndex\in\ic{1,d} } \LocalIndex
                                     \CoordinateNormDual{\TripleNormBall}{\LocalIndex}
                                     = \CoordinateNormDual{\TripleNormBall}{\textrm{Id}} \)   
    \\
                                   &
    \\
    \( \partial\lzero\np{\primal} =\emptyset \) 
                                   &
                                     \(  \dual \in \partial_{\CouplingCapra}\lzero\np{\primal} \iff
                                     \begin{cases}
                                       \dual \in 
                                       \NORMAL_{ \CoordinateNorm{\TripleNormBall}{l} }
                                       \np{\frac{ \primal }{ \CoordinateNorm{ \TripleNorm{\primal} }{l} } }
                                       \\
                                       \mtext{and }
                                       l \in \argmax_{\LocalIndex\in\ic{0,d}} \bc{ \CoordinateNormDual{\TripleNorm{\dual}}{\LocalIndex}-\LocalIndex }
                                     \end{cases}
    \)
    \\
    \( \forall \primal \in \RR^d\backslash\{0\} \)
                                   & \( \forall \primal \in \RR^d\backslash\{0\} \), where \( l=\lzero\np{\primal} \geq 1 \) 
    \\
    \hline\hline
  \end{tabular}
  \caption{Comparison of Fenchel and \Capra-conjugates, biconjugates
    and subdifferentials
    of the \lzeropseudonorm\ in~\eqref{eq:pseudo_norm_l0}, and
    of the characteristic functions \( \delta_{ \LevelSet{\lzero}{k} } \) 
    of its level sets~\eqref{eq:pseudonormlzero_level_set}, 
    for \( k\in\ic{0,d} \) 
    \label{tab:results_conjugacy}}
\end{table}

\section{Conclusion}

In this paper, we have presented a new family of conjugacies, 
which depend on a given general 
source norm, and we have shown that they are suitable for the \lzeropseudonorm. 
More precisely, given a (source) norm on~$\RR^d$, we have defined,
on the one hand, a sequence of so-called coordinate-$k$ norms
and, on the other hand, a coupling between~$\RR^d$ and itself, 
called Capra (constant along primal rays).
With this, we have provided formulas for the \Capra-conjugate and biconjugate,
and for the \Capra\ subdifferentials, 
of functions of the \lzeropseudonorm,
in terms of the coordinate-$k$ norms.
Table~\ref{tab:results_conjugacy} provides the results of 
Proposition~\ref{pr:pseudonormlzero_conjugate_varphi},
Proposition~\ref{pr:pseudonormlzero_biconjugate_varphi},
and
Proposition~\ref{pr:pseudonormlzero_subdifferential_varphi},
in the case of the \lzeropseudonorm\ and of the
characteristic functions \( \delta_{ \LevelSet{\lzero}{k} } \)
of its level sets~\eqref{eq:pseudonormlzero_level_set}.
It compares them with the Fenchel conjugates and biconjugates.
As an application, we have provided a new family of lower bounds for 
the \lzeropseudonorm, as a fraction between two norms,
the denominator being any norm.

In the companion paper~\cite{Chancelier-DeLara:2020_Variational}, 
we provide sufficient conditions under which the \lzeropseudonorm\
is $\CouplingCapra$-convex. We are currently investigating how 
the Capra conjugacies could provide algorithms for exact sparse optimization
\medskip

\textbf{Acknowledgements.}
We want to thank Guillaume Obozinski
for discussions on first versions of this work,
as well as the anonymous Referee and Associate Editor
whose comments helped us improve the manuscript.

\appendix

\section{Background on Fenchel-Moreau conjugacies}
\label{Appendix}

We review general concepts and notations on Fenchel-Moreau conjugacies, 
then focus on the special case of the Fenchel conjugacy.

\subsubsubsection{The general case}

Let $\PRIMAL$ (``primal''), $\DUAL$ (``dual'') be two sets 
and \( \coupling : \PRIMAL \times \DUAL \to \barRR \) 
be a so-called \emph{coupling} function.
With any coupling, we associate \emph{conjugacies} 
from \( \barRR^\PRIMAL \) to \( \barRR^\DUAL \) 
and from \( \barRR^\DUAL \) to \( \barRR^\PRIMAL \) 
as follows.

\begin{subequations}
  %
    The \emph{$\coupling$-Fenchel-Moreau conjugate} of a 
    function \( \fonctionprimal : \PRIMAL  \to \barRR \), 
    with respect to the coupling~$\coupling$, is
    the function \( \SFM{\fonctionprimal}{\coupling} : \DUAL  \to \barRR \) 
    defined by
    \begin{equation}
      \SFM{\fonctionprimal}{\coupling}\np{\dual} = 
      \sup_{\primal \in \PRIMAL} \Bp{ \coupling\np{\primal,\dual} 
        \LowPlus \bp{ -\fonctionprimal\np{\primal} } } 
      \eqsepv \forall \dual \in \DUAL
      \eqfinp
      \label{eq:Fenchel-Moreau_conjugate}
    \end{equation}
    With the coupling $\coupling$, we associate 
    the \emph{reverse coupling~$\coupling'$} defined by 
    \begin{equation}
      \coupling': \DUAL \times \PRIMAL \to \barRR 
      \eqsepv
      \coupling'\np{\dual,\primal}= \coupling\np{\primal,\dual} 
      \eqsepv
      \forall \np{\dual,\primal} \in \DUAL \times \PRIMAL
      \eqfinp
      \label{eq:reverse_coupling}
    \end{equation}
    The \emph{$\coupling'$-Fenchel-Moreau conjugate} of a 
    function \( \fonctiondual : \DUAL \to \barRR \), 
    with respect to the coupling~$\coupling'$, is
    the function \( \SFM{\fonctiondual}{\coupling'} : \PRIMAL \to \barRR \) 
    defined by
    \begin{equation}
      \SFM{\fonctiondual}{\coupling'}\np{\primal} = 
      \sup_{ \dual \in \DUAL } \Bp{ \coupling\np{\primal,\dual} 
        \LowPlus \bp{ -\fonctiondual\np{\dual} } } 
      \eqsepv \forall \primal \in \PRIMAL 
      \eqfinp
      \label{eq:Fenchel-Moreau_reverse_conjugate}
    \end{equation}
    The \emph{$\coupling$-Fenchel-Moreau biconjugate} of a 
    function \( \fonctionprimal : \PRIMAL  \to \barRR \), 
    with respect to the coupling~$\coupling$, is
    the function \( \SFMbi{\fonctionprimal}{\coupling} : \PRIMAL \to \barRR \) 
    defined by
    \begin{equation}
      \SFMbi{\fonctionprimal}{\coupling}\np{\primal} = 
      \bp{\SFM{\fonctionprimal}{\coupling}}^{\coupling'} \np{\primal} = 
      \sup_{ \dual \in \DUAL } \Bp{ \coupling\np{\primal,\dual} 
        \LowPlus \bp{ -\SFM{\fonctionprimal}{\coupling}\np{\dual} } } 
      \eqsepv \forall \primal \in \PRIMAL 
      \eqfinp
      \label{eq:Fenchel-Moreau_biconjugate}
    \end{equation}
    %

  The biconjugate of a 
  function \( \fonctionprimal : \PRIMAL  \to \barRR \) satisfies
  \begin{equation}
    \SFMbi{\fonctionprimal}{\coupling}\np{\primal}
    \leq \fonctionprimal\np{\primal}
    \eqsepv \forall \primal \in \PRIMAL 
    \eqfinp
    \label{eq:galois-cor}
  \end{equation}
  
\end{subequations}

\subsubsubsection{The Fenchel conjugacy}

When the sets $\PRIMAL$ and $\DUAL$ are two vector spaces that
are \emph{paired} with a bilinear form \( \proscal{}{} \),
in the sense of convex analysis \cite[p.~13]{Rockafellar:1974},
the corresponding conjugacy is the classical 
\emph{Fenchel conjugacy}.
For any functions \( \fonctionprimal : \PRIMAL  \to \barRR \)
and \( \fonctiondual : \DUAL \to \barRR \), we denote\footnote{%
  In convex analysis, one does not use the notation~\( \LFMr{} \),
  but simply the notation~\( \LFM{} \), as it is often the case that 
  \(\PRIMAL=\DUAL\) in the Euclidian and Hilbertian cases.}
\begin{subequations}
  \begin{align}
    \LFM{\fonctionprimal}\np{\dual} 
    &= 
      \sup_{\primal \in \PRIMAL} \Bp{ \proscal{\primal}{\dual} 
      \LowPlus \bp{ -\fonctionprimal\np{\primal} } } 
      \eqsepv \forall \dual \in \DUAL
      \eqfinv
      \label{eq:Fenchel_conjugate}
    \\
    \LFMr{\fonctiondual}\np{\primal} 
    &= 
      \sup_{ \dual \in \DUAL } \Bp{ \proscal{\primal}{\dual} 
      \LowPlus \bp{ -\fonctiondual\np{\dual} } } 
      \eqsepv \forall \primal \in \PRIMAL 
      \eqfinv
      \label{eq:Fenchel_conjugate_reverse}
    \\
    \LFMbi{\fonctionprimal}\np{\primal} 
    &= 
      \sup_{\dual \in \DUAL} \Bp{ \proscal{\primal}{\dual} 
      \LowPlus \bp{ -\LFM{\fonctionprimal}\np{\dual} } } 
      \eqsepv \forall \primal \in \PRIMAL
      \eqfinp
      \label{eq:Fenchel_biconjugate}
  \end{align}
\end{subequations}

For any function \( \fonctionuncertain : \UNCERTAIN \to \barRR \),
its \emph{epigraph} is \( \epigraph\fonctionuncertain= 
\defset{ \np{\uncertain,t}\in\UNCERTAIN\times\RR}%
{\fonctionuncertain\np{\uncertain} \leq t} \),
its \emph{effective domain} is 
\( \dom\fonctionuncertain= 
\defset{\uncertain\in\UNCERTAIN}{ \fonctionuncertain\np{\uncertain} <+\infty}
\).
A function \( \fonctionuncertain : \UNCERTAIN \to \barRR \)
is said to be \emph{proper} if it never takes the value~$-\infty$
and that \( \dom\fonctionuncertain \not = \emptyset \).
When \( \UNCERTAIN \) is equipped with a topology,
the function \( \fonctionuncertain : \UNCERTAIN \to \barRR \)
is said to be \emph{lower semi continuous (lsc)}
if its epigraph is closed,
and is said to be \emph{closed}
if \( \fonctionuncertain \) is either \emph{lower semi continuous (lsc)}
and nowhere having the value $-\infty$,
or is the constant function~$-\infty$
\cite[p.~15]{Rockafellar:1974}.

It is proved that the Fenchel conjugacy induces a one-to-one correspondence
between the closed convex functions on~$\PRIMAL$
and the closed convex functions on~$\DUAL$
\cite[Theorem~5]{Rockafellar:1974}.
Here, a function  is said to be \emph{convex} if its
epigraph is convex.
The set of closed convex functions is
the set of proper convex functions united with the
two constant functions~$-\infty$ and ~$+\infty$.

\newcommand{\noopsort}[1]{} \ifx\undefined\allcaps\def\allcaps#1{#1}\fi

\end{document}